\documentclass{amsart}
\usepackage{amsmath,amssymb,amsthm,mathrsfs,amscd}
\usepackage{indentfirst}
\usepackage{amsrefs} 
\usepackage{enumerate}
\usepackage{fullpage}
\usepackage{mathptmx}
\usepackage{cleveref}
\usepackage{arydshln}  
\usepackage{xcolor}

\theoremstyle{plain}
\newtheorem{thm}{Theorem}[section]
\newtheorem{lem}[thm]{Lemma}
\newtheorem{prop}[thm]{Proposition}
\newtheorem{cor}[thm]{Corollary}
\newtheorem{ques}[thm]{Question}

\newtheorem*{them*}{Theorem}
\newtheorem*{prop*}{Proposition}

\theoremstyle{definition}
\newtheorem{dfn}[thm]{Definition}
\newtheorem{rmk}[thm]{Remark}

\newtheorem{set}[thm]{Setting}
\newtheorem{exam}[thm]{Example}

\Crefname{thm}{Theorem}{Theorems}
\Crefname{lem}{Lemma}{Lemmas}

\DeclareMathOperator{\HH}{H}
\DeclareMathOperator{\gr}{{gr}}

\newcommand{\kk}{{\mathbf k}}
\newcommand{\ee}{{\mathbf e}}

\newcommand{\ZZ}{{\mathbb {Z}}}

\newcommand{\RR}{{\mathbb {R}}}

\newcommand{\MM}{{\mathfrak {M}}}
\newcommand{\MN}{{\mathfrak {N}}}
\newcommand{\p}{\mathfrak p }
\newcommand{\n}{\mathfrak n}
\newcommand{\m}{\mathfrak m}

\DeclareMathOperator{\he}{ht}

\DeclareMathOperator{\core}{core}
\DeclareMathOperator{\Hom}{Hom}
\DeclareMathOperator{\Tor}{Tor}
\DeclareMathOperator{\Ext}{Ext}
\DeclareMathOperator{\sExt}{^*Ext}

\DeclareMathOperator{\Soc}{Soc}
\DeclareMathOperator{\indeg}{indeg}

\DeclareMathOperator{\Min}{Min}
\DeclareMathOperator{\Ass}{Ass}
\DeclareMathOperator{\depth}{depth}


\DeclareMathOperator{\Spec}{Spec}

\DeclareMathOperator{\sHom}{^*Hom}



\DeclareMathOperator{\CC}{\mathcal C}

\DeclareMathOperator{\lcm}{lcm}
\DeclareMathOperator{\ord}{ord}

\DeclareMathOperator{\NP}{{NP}}
\DeclareMathOperator{\relint}{relint}


\newcommand{\ti}[1]{{\textit{#1}}}

\begin{document}  
\title{Quasi-Gorensteinness of extended Rees algebras}
\author{Youngsu Kim}
\address{Department of Mathematics, University of California, Riverside,
California, 92521 U.S.A}
\email{youngsu.kim@ucr.edu}

\subjclass[2010]{{13A30}, {13H10}}  
\date{\today} 

\begin{abstract}
Let $R$ be a Noetherian local ring and $I$ an $R$-ideal. It is well-known that if the associated graded ring $\gr_I(R)$ is Cohen-Macaulay (Gorenstein), then so is $R$, but the converse is not true in general. In this paper we investigate the Cohen-Macaulayness and Gorensteinness of the associated graded ring $\gr_I(R)$ under the hypothesis of the extended Rees algebra $R[It,t^{-1}]$ is quasi-Gorenstein or the associated graded ring $\gr_I(R)$ is a domain. 
\end{abstract}

\maketitle

\section{Introduction}
A Noetherian ring having a canonical module is called \textit{quasi-Gorenstein} if it is  locally isomorphic to the canonical module. Clearly, a ring is Gorenstein if and only if it is quasi-Gorenstein and  Cohen-Macaulay. Murthy \cite{Mu64} showed that a Cohen-Macaulay UFD having a canonical module is Gorenstein. In general, the UFD property implies quasi-Gorensteinness if the ring has a canonical module. There exists a complete UFD having a canonical module which is not Cohen-Macaulay, see \cite[Theorem 5.8]{FK74}. This shows that a quasi-Gorenstein ring needs not to be  Gorenstein in general. Surprisingly, the quasi-Gorenstein property implies the Gorensteinness for some classes of extended Rees algebras. In this regard, Heinzer, M.-K. Kim, and Ulrich posed the following question.

\begin{ques}[{\cite[Question 4.11]{HKU05}}]\label{Question1}
Let $(R,\m)$ be a local Gorenstein ring and let $I$ be an $\m$-primary ideal. Is the extended Rees algebra $R[It,t^{-1}]$ Gorenstein $($equivalently Cohen-Macaulay$)$ if it is quasi-Gorenstein?
\end{ques}

When the dimension of the ring $R$ is $1$, \Cref{Question1} has an affirmative answer because $R[It,t^{-1}]$ has dimension $2$ in this case and quasi-Gorenstein rings satisfy Serre's condition $(S_2)$. 
The authors showed that \Cref{Question1} has an affirmative answer when $R$ is a $2$-dimensional pseudo-rational ring \cite[Cor. 4.12]{HKU05}. The general case still remains open. However, if one removes the condition of $I$ being $\m$-primary, then there exists an extended Rees algebra which is a UFD (hence quasi-Gorenstein), but not Gorenstein  \cite[Example 4.7]{HH92}. 
In \Cref{sec:q-gor} we provide an affirmative answer to \Cref{Question1}  if $I$ is an almost complete intersection under the additional assumption that the index of nilpotency and the reduction number of $I$ coincide (which is a necessary condition for $R[It,t^{-1}]$ to be Cohen-Macaulay), see \Cref{thm:qGor2}. We are also able to treat the case when $I$ is a monomial ideal in a polynomial ring in $d$-variables and $I$ has a $d$-generated monomial reduction, see \Cref{thm:qGor1}. The latter condition, $I$ having such a reduction, is equivalent to the condition that $I$ has only one Rees valuation.\\

For monomial ideals in a polynomial ring, the normalization of the extended Rees algebra is Cohen-Macaulay \cite{Ho73}. Therefore, these normalizations are quasi-Gorenstein if and only if they are Gorenstein. In \cite{HKU11} the authors characterized the Gorenstein property of normalized extend Rees algebras of a monomial ideal when the ideal of finite colength has only one Rees valuation. Recall that the Rees valuations of such monomial ideals correspond to the bounded half spaces defining the Newton Polyhedron of $I$. 
We are able to remove the condition of having one Rees valuation, and provide a characterization of the Gorenstein property in terms of the half spaces defining the Newton polyhedron of the given ideal.\\

Since $\gr_I(R) \cong R[It,t^{-1}]/(t^{-1})$ and $t^{-1}$ is a homogeneous non zero-divisor, the associated graded ring $\gr_I(R)$ is Cohen-Macaulay (or Gorenstein) if and only if the extended Rees algebra $R[It,t^{-1}]$ is. Hochster \cite[p.\ 55, Proposition]{Ho73} and Herzog, Simis, Vasconcelos \cite[Proposition 1.1]{HSV87} showed that if $R$ is a local Gorenstein ring and the associated graded ring $\gr_I(R)$ is a domain, then $R[It,t^{-1}]$ is quasi-Gorenstein. Hence an affirmative answer to \Cref{Question1} would imply that $\gr_I (R)$ is Cohen-Macaulay if it is a domain. This version of the question is meaningful even when the ambient ring $R$ is not Gorenstein. 

\begin{ques}\label{Question2}
Let $(R, \m)$ be a local Cohen-Macaulay ring. Is $\gr_{\m}(R)$ Cohen-Macaulay if it is a domain?
\end{ques}
The question has an affirmative answer when $R$ is a complete intersection ring of embedding codimension at most $2$, i.e., $\widehat{R} \cong S/I$ where $(S, \n)$ is a regular local ring and $I$ is a complete intersection ideal of height at most $2$. It is natural to ask if the question has an affirmative answer when the ideal $I$ is generated by $3$ elements. In \Cref{sec:grDomain} we prove that this is indeed the case if in addition $I \nsubseteq \n^5$ (\Cref{thm:integralGr}). \\

We already mentioned that if $\gr_I (R)$ is Cohen-Macaulay, then $R$ is Cohen-Macaulay. Recall that the Cohen-Macaulyness of a ring can be characterized by Serre's condition $(S_i)$ for every $i$.
 
\begin{ques}\label{Question3}
Let $R$ be a Noetherian ring and $I$ be an $R$-ideal. If $\gr_I(R)$ satisfies Serre's condition $(S_i)$ $($or $(R_i))$, then does $R[It,t^{-1}]$ satisfy the same condition?
\end{ques}

We give a positive answer to \Cref{Question3} for proper ideals in a universally catenary equidimensional local ring (\Cref{thm:Serre}).\\

The outline of the paper is as follows: In \Cref{sec:prel} we set up the notation. In Section 3 we start by introducing the graded canonical module of $\mathbb{Z}$-graded rings and study basic properties related to the quasi-Gorensteinness of extended Rees algebras. The two main theorems, \Cref{thm:qGor1,thm:qGor2}, are proved in this section. 
In addition, a result on the $\textbf{a}$-invariant of the extended Rees algebra (\Cref{thm:a-inv}) and the core of powers of the ideal (\Cref{thm:core}) are presented. 
In Section 4 we provide a characterization of the Gorensteinness of normalized extended Rees algebras of finite colength monomial ideals in a polynomial ring. This is a generalization of \cite[Theorem 5.6]{HKU11}.  
In Section 5 we discuss \Cref{Question2} in detail, and provide a positive answer in the case of almost complete intersections of codimension $2$ (\Cref{thm:integralGr}). 
In Section 6 we give a positive answer to \Cref{Question3} when the ring in question is a universally catenary equidimensional local ring and the ideal is not a unit ideal (\Cref{thm:Serre}).

\section*{Acknowledgements}
This note is part of the author's Ph.\ D.\ thesis at Purdue University under the direction of Bernd Ulrich. 
The author would like give special thanks to his academic advisor Bernd Ulrich for his tremendous support. 
The author is also grateful to Bill Heinzer for stimulating conversations and suggestions, and the author would like to thank Sam Huckaba for the discussion of an example in his paper. Lastly, the author thanks for the anonymous referee for his/her helpful comments.

\section{Preliminaries}\label{sec:prel} 
All rings are commutative Noetherian with unity. Let $(R,\m)$ be a local ring and $M$ a finitely generated $R$-module. Let $\mu(M)$ denote the minimal number of generators of $M$. By $\Min(M)$ and $\Ass(M)$ we denote the set of minimal primes and associated primes of $M$, respectively. 
For $R$-ideals $J \subseteq I$, the ideal $J$ is called a \textit{reduction} of $I$ if there exists a non-negative integer $n$ such that $J I^n = I^{n+1}$, the smallest such integer is called the \textit{reduction number} of $I$ with respect to $J$, denoted by $r_J(I)$, and let $r(I) := \min \{ r_J(I) \mid J \hbox{ a reduction of } I \}$. 
For an $r \times s$ matrix $\Phi$ with entries in $R$, we write $I_n (\Phi)$ for the ideal generated by the $n \times n$ minors of $\Phi$. By convention, we set $I_n(\Phi) = R$ for $n \le 0$ and we set $I_n (\Phi) = 0$ for $n > \min \{ r,s \}$ .\\

For an ideal $I \subset R$, we write
\[
R[It] = \oplus_{i \ge 0} I^i t^i, R[It,t^{-1}] = \oplus_{i \in \mathbb{Z} } I^i t^i, \hbox{ and }\gr_I(R ) = \oplus_{i \ge 0} I^i/I^{i+1},
\]
for the \textit{Rees algebra}, the \textit{extended Rees algebra}, and the  \textit{associated graded ring} of $R$ with respect to the ideal $I$, respectively. Sometimes, they are also called blowup algebras. These are the rings which appear in the construction of blowing up an affine variety along a closed subvariety in algebraic geometry. \\

Let $\omega_R$ denote a \textit{canonical module} of $R$ if it exists. Here the \textit{canonical module} is the dualizing module in the sense of Grothendieck's local duality theorem. For instance, when $(R,\m)$ is a complete local ring of dimension $d$, then a canonical module $\omega_R$ of $R$ is $\Hom_R( \HH_{\m}^d ( R ) , E_R(R/ \m))$, where $\HH_{\m}^d (-)$  and $E_R(-)$ denote the $d$th local cohomology with respect to $\m$ and the injective envelope, respectively. 
A local ring $R$ having a canonical module $\omega_R$ is called \textit{Gorenstein} if $R$ is Cohen-Macaulay and $R$ is the canonical module $\omega_R$. The second property $R \cong \omega_R$ can be isolated, and it is called the quasi-Gorenstein property. That is, a local ring is \textit{quasi-Gorenstein} if it is isomorphic to the canonical module $\omega_R$. \\

Let $S$ be a Noetherian $\mathbb{Z}$-graded ring with unique homogeneous maximal ideal $\MM$ and assume that $\MM$ is maximal.  For a graded module $M$, let $[M]_i$ denote the $i$th graded piece of $M$. The \textbf{a}-\textit{invariant} of $S$, denoted by $a(S)$, is $\max \{ i \in \mathbb{Z} \mid [^*\Soc (\HH_{\MM}^{d}(S))]_i \neq 0 \}$. Here $^*  \! \Soc(M) = 0 :_M \MM$ for any graded $S$-module $M$. If $S$ has a graded canonical module $\omega_S$, then by graded local duality we have $a(S) = - \min \{ i \in \mathbb{Z} \mid [\omega_S/ \MM\omega_S]_i \neq 0 \}$.  If $S$ is positively graded, then this number is $\max \{ i \in \mathbb{Z} \mid [\HH_{\MM}^{d}(S)]_i \neq 0 \}$. \\

In the sequel we use the book by Bruns and Herzog \cite{BH} as a reference for basic definitions and terminologies.

\section{Quasi-Gorensteinness of extended Rees algebras}\label{sec:q-gor}
\section*{Graded canonical modules}

Let $R$ be a Noetherian $\ZZ$-graded ring with unique maximal homogeneous ideal $\MM$. Then the subring $R_0$ is local. We write $\m$ for the maximal ideal of $R_0$. Let $E_{R_0} (R_0 / \m)$ be the injective envelope of the residue field of $R_0$. 
For a homogeneous prime ideal $\p$ of $R$, we write $R_{(\p)} := S^{-1} R$ where $S$ is the set of the homogeneous elements of $R$ which are not in $\p$. Observe that $R_{(\p)}$ is $\ZZ$-graded and has unique maximal homogeneous ideal $\p R_{(\p)}$. 
For $\ZZ$-graded $R$-modules $M$ and $N$, let $\sHom_R (M, N)$ denote 
the $R$-submodule of $\Hom_R (M, N)$ generated by the homogeneous $R$-linear maps of arbitrary degree from $M$ to $N$. For a finitely generated $\ZZ$-graded $R$-module $M$, let $\widehat{M}$ denote the tensor product $M \otimes_{R_0} \widehat{R_0}^{\m}$. 
We say $M$ is \ti{$^*$-complete} if $M \cong \widehat{M}$ by the natural isomorphism. 
In particular, $\widehat{R}$ is $^*$-complete, and $R$ is $^*$-complete if and only if $R_0$ is complete.

\begin{dfn} Let $d = \dim R_{\MM}$. 
A finitely generated graded $R$-module $\omega_R$ is called a \ti{graded canonical module} of $R$ if 
\[
\widehat{\omega_R}  \cong \sHom_{R_0} ( {\HH}^{d}_{\MM}(R), E_{R_0} (R_0 / \m)) 
\]
as graded $R$-modules.
\end{dfn}

\begin{rmk}\label{sCompletion}
Let $R$ be a Noetherian $\ZZ$-graded ring with a unique maximal homogeneous ideal. 
\begin{enumerate}[$(a)$]
\item If $R$ is $^*$-complete, then $R$ has a graded canonical module. 
\item {{\cite[Exercise 7.5]{Ei95}}} For finitely generated $\ZZ$-graded $R$-modules $M$ and $N$, if $\widehat{M} \cong \widehat{N}$, then $M \cong N$. 
\end{enumerate}
\end{rmk}

\begin{lem}[cf. {\cite[Corollary 3.6.14]{BH}}]\label{regHomSeq} 
Let $R$ be a $\ZZ$-graded Cohen-Macaulay ring with a unique maximal homogeneous ideal. Assume that $R$ has a graded canonical module $\omega_R$. If $\textbf{\underline{x}} = x_1,\dots, x_n$ form a homogeneous regular sequence on $R$, then $\textbf{\underline{x}}$ form a regular sequence on $\omega_R$ and we have
\[
\omega_{R/(\textbf{\underline{x}} )} \cong (\omega_{R} / \textbf{\underline{x}} \, \omega_R ) (\sum_{i=1}^n \deg (x_i)).
\]
\end{lem}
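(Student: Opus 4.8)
The plan is to reduce to the case of a single regular element $x$ of degree $a$ and then to iterate, so I would first show that if $x$ is a homogeneous nonzerodivisor on $R$ of degree $a$, then $x$ is a nonzerodivisor on $\omega_R$ and $\omega_{R/xR} \cong (\omega_R/x\omega_R)(a)$. Since both the canonical module and the asserted isomorphism are defined after $^*$-completion, and since completion along the maximal ideal of $R_0$ is flat and commutes with finite local cohomology, I would reduce the whole statement to the $^*$-complete case, where $\omega_R = \sHom_{R_0}(\HH^d_\MM(R), E)$ is given explicitly by graded local duality. This lets me trade all the statements about the canonical module for statements about local cohomology, which behave well in short exact sequences.

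The core computation is to apply $\HH^\bullet_\MM(-)$ to the short exact sequence
\[
0 \longrightarrow R(-a) \xrightarrow{\ x\ } R \longrightarrow R/xR \longrightarrow 0,
\]
where the shift $(-a)$ records that multiplication by $x$ raises degree by $a = \deg x$. Because $R$ is Cohen-Macaulay of dimension $d$ with a unique maximal homogeneous ideal, we have $\HH^i_\MM(R) = 0$ for $i < d$, and $R/xR$ is Cohen-Macaulay of dimension $d-1$. The long exact sequence in local cohomology therefore collapses to
\[
0 \longrightarrow \HH^{d-1}_\MM(R/xR) \longrightarrow \HH^d_\MM(R)(-a) \xrightarrow{\ x\ } \HH^d_\MM(R) \longrightarrow 0.
\]
Here I am using that $R/xR$ again has a unique maximal homogeneous ideal, so its top local cohomology sits in degree $d-1$. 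Applying the graded functor $\sHom_{R_0}(-, E)$, which is exact and contravariant and converts the degree shift $(-a)$ into a shift $(a)$, yields a short exact sequence
\[
0 \longrightarrow \omega_R \xrightarrow{\ x\ } \omega_R(a) \longrightarrow \omega_{R/xR} \longrightarrow 0.
\]
Reading the left-hand map shows $x$ is a nonzerodivisor on $\omega_R$, and comparing cokernels gives $\omega_{R/xR} \cong (\omega_R/x\omega_R)(a)$, which is precisely the claim for $n = 1$.

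To pass from one element to the sequence $\underline{x} = x_1, \dots, x_n$, I would induct on $n$. Having established that $x_1$ is a nonzerodivisor on $\omega_R$, I note that $R/x_1 R$ is again a $\ZZ$-graded Cohen-Macaulay ring with a unique maximal homogeneous ideal possessing the graded canonical module $\omega_{R/x_1 R} \cong (\omega_R/x_1\omega_R)(\deg x_1)$. The images $\overline{x_2}, \dots, \overline{x_n}$ form a homogeneous regular sequence on $R/x_1 R$, so the inductive hypothesis applies and, after accumulating the degree shifts, produces $\sum_{i=1}^n \deg(x_i)$ in the exponent. The main technical point to handle with care is the bookkeeping of the degree shifts under $\sHom_{R_0}(-,E)$ and the commutation of $^*$-completion with local cohomology; the homological mechanism itself is just the standard argument behind \cite[Corollary 3.6.14]{BH}, adapted from the $\NN$-graded to the $\ZZ$-graded setting with a unique maximal homogeneous ideal.
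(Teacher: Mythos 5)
Your argument follows the same route as the paper's proof: dualize the collapsed local cohomology sequence coming from $0 \to R(-a) \stackrel{x}{\to} R \to R/xR \to 0$, using Cohen--Macaulayness of $R$ and $R/xR$ to kill the lower cohomology, then induct on the length of the sequence. However, there is a genuine gap at the final identification. After applying $\sHom_{R_0}(-,E_{R_0}(R_0/\m))$, the cokernel you obtain is $\sHom_{R_0}({\HH}^{d-1}_{\MM}(R/xR), E_{R_0}(R_0/\m))$, and you simply declare this to be $\omega_{R/xR}$. That is not the definition: the graded canonical module of $R/xR$ is computed with the dualizing functor of $R/xR$ itself, namely $\sHom_{\overline{R_0}}({\HH}^{d-1}(R/xR), E_{\overline{R_0}}(\overline{R_0}/\overline{\m}))$, where $\overline{R_0} = [R/xR]_0$ and $E_{\overline{R_0}}$ is the injective envelope over that ring. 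In the $\ZZ$-graded setting of this lemma the element $x$ need not have positive degree, so $[xR]_0 = x\, R_{-\deg(x)}$ can be nonzero and $\overline{R_0}$ is then a proper quotient of $R_0$; dualizing into $E_{R_0}(R_0/\m)$ and dualizing into $E_{\overline{R_0}}(\overline{R_0}/\overline{\m})$ are a priori different functors. This is not a pedantic point for this paper: its key application is $x = t^{-1}$ of degree $-1$ in $T = R[It,t^{-1}]$, where $[T/(t^{-1})]_0 = R/I \neq R = T_0$.

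The paper devotes the entire second half of its proof to exactly this step: it invokes $E_{\overline{R_0}}(\overline{R_0}/\overline{\m}) \cong \Hom_{R_0}(\overline{R_0}, E_{R_0}(R_0/\m))$ and then hom--tensor adjointness to show that, on graded $R/xR$-modules,
\[
\sHom_{\overline{R_0}}\bigl(-, E_{\overline{R_0}}(\overline{R_0}/\overline{\m})\bigr) \cong \sHom_{R_0}\bigl(-, E_{R_0}(R_0/\m)\bigr),
\]
together with the observation ${\HH}^{d-1}_{\MM}(R/xR) \cong {\HH}^{d-1}_{\MM/(x)}(R/xR)$ so that the local cohomology appearing in your cokernel is the one entering the definition of $\omega_{R/xR}$. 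Your proposal needs this change-of-rings argument (or an equivalent one) inserted between the dualized exact sequence and the conclusion; everything else --- the reduction to the $^*$-complete case, the shift bookkeeping $(-a) \mapsto (a)$, and the induction on $n$ --- is correct and matches the paper.
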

\begin{proof}
It suffices to show the statement when $n = 1$.
Let $\MM$ be the unique maximal homogeneous ideal, $d = \dim R_{\MM}$, and $\m$  the maximal ideal of $R_0$. 
By \Cref{sCompletion}(b), we may assume that $R$ is $*$-complete (hence so is $R/xR$).
Since $R$ and $R/(x)$ are Cohen-Macaulay, the exact sequence
\[
0 \to R ( - \deg (x) ) \stackrel{\cdot x}{\to} R \to R/ (x) \to 0
\]
 induces the exact sequence 
\[
0 \to {\HH}^{d-1}_{\MM}(R/(x)) \to {\HH}^{d}_{\MM}(R) ( - \deg (x)) \stackrel{\cdot x}{\to} {\HH}^{d}_{\MM}(R)  \to 0.
\]
Taking $\sHom_{R_0} ( -, E_{R_0} (R_0 / \m))$ we obtain the exact sequence
\begin{equation}\label{eq:regHomSeq}
0 \to \omega_R \stackrel{\cdot x}{\to} \omega_R ( \deg(x)) \to \sHom_{R_0} ({\HH}^{d-1}_{\MM}(R/(x)), E_{R_0} (R_0 / \m))  \to 0.
\end{equation}
This shows that $x$ is a non zerodivisor of $\omega_R$. Hence we are done once we have shown the isomorphism  $\omega_{R/(x)} \cong \sHom_{R_0}({\HH}^{d-1}_{\MM}(R/(x)), E_{R_0} (R_0 / \m))$. From the ring homomorphism $R \to R/(x)$, we obtain a surjective ring homomorphism $R_0 \to [R/(x)]_0$ of local rings. We write $(\overline{R_0}, \overline{\m})$ for the local ring $[R/(x)]_0$. 
By \cite[Exercise 13]{HuLLC} we have $E_{\overline{R_0}} (\overline{R_0} / \overline{ \m})) \cong \Hom_{R_0} (\overline{R_0}, E_{R_0} (R_0 / \m))$. 
Therefore, by the hom-tensor adjointness we obtain
\begin{align*}
\sHom_{\overline{R_0}}(- ,  E_{\overline{R_0}} (\overline{R_0} / \overline{ \m}))  
	&\cong \sHom_{\overline{R_0}}(- , \sHom_{R_0} (\overline{R_0}, E_{R_0} (R_0 / \m)) ) \\
	&\cong \sHom_{R_0}(- \otimes_{\overline{R_0}} \overline{R_0},  E_{R_0} (R_0 / \m)) \\
	&\cong \sHom_{R_0}(- ,  E_{R_0} (R_0 / \m))
\end{align*}
for any $R/ (x)$-module in the first variable.
Since ${\HH}^{d-1}_{\MM}(R/(x)) \cong {\HH}^{d-1}_{\MM/(x)}(R/(x))$, this shows the statement. 
\end{proof}

The following lemma is a partial converse of \Cref{regHomSeq}.
\begin{lem}\label{ConvRegHomSeq} 
Let $R$ be a $\ZZ$-graded Cohen-Macaulay ring with a unique maximal homogeneous ideal. 
Assume that $\textbf{\underline{x}} = x_1,\dots, x_n$ form a homogeneous regular sequence, and $R/ (x_1,\dots, x_n)$ has a graded canonical module $\omega_{R/ (x_1,\dots, x_n)}$. Write $\rho = \sum_{i = 1}^n \deg(x_i)$.
If $\omega_{R/ (x_1,\dots, x_n)} \cong (R/ (x_1,\dots, x_n))(a)$ for some $a \in \ZZ$, then $\omega_R \cong R(a - \rho)$. 
\end{lem}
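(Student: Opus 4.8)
The plan is to reduce to the $^*$-complete case, apply \Cref{regHomSeq} there to transfer the hypothesis on $\omega_{R/(\textbf{\underline{x}})}$ into a statement about $\omega_R / \textbf{\underline{x}}\,\omega_R$, and then lift the resulting rank-one freeness from the quotient up to $R$. For the reduction, I would note that the $\m$-adic completion $\widehat R = R \otimes_{R_0} \widehat{R_0}$ is faithfully flat over $R$ with $\widehat{\MM} = \MM \widehat R$. Flat base change for local cohomology then gives $\HH^i_{\widehat\MM}(\widehat R) \cong \HH^i_\MM(R) \otimes_{R_0} \widehat{R_0}$, which vanishes for $i \ne d := \dim R_{\MM}$ since $R$ is Cohen--Macaulay; hence $\widehat R$ is Cohen--Macaulay and, by \Cref{existenceGCM}, has a graded canonical module. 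Combining this same base-change isomorphism with Matlis duality over $R_0$ (using $E_{\widehat{R_0}}(\widehat{R_0}/\widehat\m) \cong E_{R_0}(R_0/\m)$ and the adjunction $\sHom_{\widehat{R_0}}(N \otimes_{R_0}\widehat{R_0}, E) \cong \sHom_{R_0}(N, E)$) shows that forming the graded canonical module commutes with $-\otimes_{R_0}\widehat{R_0}$. Applying this to both $R$ and $R/(\textbf{\underline{x}})$ and invoking \Cref{comIso}, it suffices to prove the conclusion for $\widehat R$, with the hypothesis becoming $\omega_{\widehat R/(\textbf{\underline{x}})} \cong (\widehat R/(\textbf{\underline{x}}))(a)$. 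Thus I may assume $R$ is $^*$-complete.

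Now $R$ is Cohen--Macaulay and $^*$-complete, so \Cref{existenceGCM} supplies $\omega_R$ and \Cref{regHomSeq} applies: $\textbf{\underline{x}}$ is a regular sequence on $\omega_R$, and $\omega_{R/(\textbf{\underline{x}})} \cong (\omega_R / \textbf{\underline{x}}\,\omega_R)(\rho)$. Feeding in the hypothesis $\omega_{R/(\textbf{\underline{x}})} \cong (R/(\textbf{\underline{x}}))(a)$ yields
\[
\omega_R / \textbf{\underline{x}}\,\omega_R \cong (R/(\textbf{\underline{x}}))(a - \rho),
\]
a free $R/(\textbf{\underline{x}})$-module of rank one.

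It remains to lift this to $R$. Since $\omega_R/\textbf{\underline{x}}\,\omega_R$ is cyclic, generated in degree $\rho - a$, graded Nakayama produces a graded surjection $\pi \colon R(a-\rho) \twoheadrightarrow \omega_R$; set $K = \ker \pi$. The induced map $\bar\pi \colon (R/(\textbf{\underline{x}}))(a-\rho) \to \omega_R/\textbf{\underline{x}}\,\omega_R$ is a surjection of free rank-one $R/(\textbf{\underline{x}})$-modules, hence an isomorphism. Because $\textbf{\underline{x}}$ is a regular sequence on $\omega_R$, the Koszul complex computes $\Tor_i^R(\omega_R, R/(\textbf{\underline{x}})) = 0$ for $i > 0$; tensoring the short exact sequence $0 \to K \to R(a-\rho) \to \omega_R \to 0$ with $R/(\textbf{\underline{x}})$ therefore gives an injection $K/\textbf{\underline{x}}\,K \hookrightarrow (R/(\textbf{\underline{x}}))(a-\rho)$ whose image is $\ker\bar\pi = 0$. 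Thus $K/\textbf{\underline{x}}\,K = 0$, so $\MM K = K$ (as $\textbf{\underline{x}} \subseteq \MM$), and graded Nakayama forces $K = 0$. Hence $\omega_R \cong R(a-\rho)$, which simultaneously establishes existence.

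I expect the main obstacle to be the completion reduction in the first paragraph: one must check carefully that $-\otimes_{R_0}\widehat{R_0}$ carries graded canonical modules to graded canonical modules for \emph{both} $R$ and the quotient $R/(\textbf{\underline{x}})$ — the latter having degree-zero part $R_0/((\textbf{\underline{x}}) \cap R_0)$ rather than $R_0$, so that its own completion $\bar R \otimes_{(\bar R)_0} \widehat{(\bar R)_0}$ must be identified with $\widehat R/(\textbf{\underline{x}})$ — so that both hypothesis and conclusion survive passage to $\widehat R$. The lifting argument itself is then routine, the only real input being the vanishing of $\Tor_1^R(\omega_R, R/(\textbf{\underline{x}}))$ guaranteed by the regularity of $\textbf{\underline{x}}$ on $\omega_R$ from \Cref{regHomSeq}.
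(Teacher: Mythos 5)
Your proposal is correct and follows essentially the same route as the paper's own proof: reduce to the $^*$-complete case via \Cref{existenceGCM} and \Cref{comIso}, apply \Cref{regHomSeq} to identify $\omega_R/\textbf{\underline{x}}\,\omega_R$ with $(R/(\textbf{\underline{x}}))(a-\rho)$, use graded Nakayama to get a surjection $R(a-\rho)\twoheadrightarrow\omega_R$ with kernel $K$, and kill $K$ by the $\Tor$-vanishing coming from regularity of $\textbf{\underline{x}}$ on $\omega_R$ plus Nakayama again. The only cosmetic differences are that the paper first reduces to $n=1$ whereas you handle the whole sequence at once via the Koszul complex, and you spell out the completion reduction (which the paper compresses into one sentence) in more detail.
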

\begin{proof}
It suffices to show the statement when $n=1$. By \Cref{sCompletion}(a) $\omega_{\widehat{R}}$ exists, and by \Cref{sCompletion}(b) it suffices to show that $\widehat{R(a - \rho)} \cong \omega_{\widehat{R}}$. Hence we may assume that $R$ is $^*$-complete. By \Cref{regHomSeq} we have $ \omega_R / x \omega_R ( \deg(x)) \cong  \omega_{R/(x)} \cong R/(x) (a)$. By Nakayama's lemma we see that $\omega_R$ is a cyclic $R$-module. Consider the exact sequence
\begin{equation}\label{eq:faithful}
0 \to K \to R(a - \deg(x)) \to \omega_R \to 0.
\end{equation}
We tensor \Cref{eq:faithful} with $R/(x)$. By \Cref{regHomSeq} $x$ is a non zerodivisor on $\omega_R$. This implies that $\Tor_1^R ( \omega_R, R/(x)) = 0$, i.e., \Cref{eq:faithful} remains exact after applying $- \otimes_R R/(x)$. Since $R/(x) (a - \deg (x) ) \cong \omega_R / x \omega_R$, one has $K / xK = 0$. By Nakayama's lemma we obtain $K = 0$. Indeed this implies $R(a- \deg(x)) \cong \omega_R$. 
\end{proof}

\begin{thm}[cf. {\cite[Theorem 5.12]{HK71}}]\label{GddHK5.12} Let $R$ and $S$ be Noetherian $\ZZ$-graded rings with unique maximal homogeneous ideals $\MM$ and $\MN$, respectively. Let $\phi: S \to R$ be a graded ring homomorphism. 
Assume that $R$ is a finitely generated $S$-module, $\phi(S_0) = R_0$, and $S$ is Cohen-Macaulay. 
Write $\dim S_{\MN} = n$ and $\dim R_{\MM} = d$. If $S$ has a graded canonical module $\omega_S$, then one has  
\[
\omega_R \cong \sExt^{n - d}_S (R, \omega_S).
\]
\end{thm}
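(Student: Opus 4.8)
The plan is to reduce to the complete case and then mimic the classical graded local-duality argument from Herzog–Kunz, applied to the finite homomorphism $\phi\colon S\to R$. The target statement is a graded version of the standard formula $\omega_R\cong\Ext^{n-d}_S(R,\omega_S)$ that holds whenever $R$ is finite over a Cohen–Macaulay ring $S$ carrying a canonical module. So **first I would** pass to completions: by \Cref{comIso} it suffices to prove the isomorphism after applying $\widehat{(-)}$, so I may assume $S$ (and hence $R$, being module-finite over $S$) is $^*$-complete. In that setting both $\omega_S$ and $\omega_R$ are honest graded canonical modules given by the Matlis-dual formula in the definition, so I can compute with $\sHom_{R_0}(-,E_{R_0}(R_0/\m))$ directly.

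**The core computation** is a graded local-duality / change-of-rings argument. Set $d=\dim R_{\MM}$ and $n=\dim S_{\MN}$. Because $R$ is module-finite over $S$ and $\phi(S_0)=R_0$, the homogeneous maximal ideal $\MM$ lies over $\MN$, and local cohomology is insensitive to the ring over which it is computed, so $\HH^{i}_{\MM}(R)\cong\HH^{i}_{\MN}(R)$ for all $i$, with $\HH^{d}_{\MN}(R)\neq 0$ and vanishing above $d$. **The key step** is then graded local duality over the Cohen–Macaulay ring $S$: for the finite graded $S$-module $R$ one has
\[
\HH^{i}_{\MN}(R)\cong\sHom_{S_0}\!\bigl(\sExt^{\,n-i}_{S}(R,\omega_S),\,E_{S_0}(S_0/\m)\bigr)
\]
as graded modules. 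Taking $i=d$ and applying Matlis duality back (using that $\sExt^{n-d}_S(R,\omega_S)$ is finitely generated, so it is reflexive with respect to the graded dualizing functor over the complete base) yields
\[
\sHom_{S_0}\!\bigl(\HH^{d}_{\MN}(R),\,E_{S_0}(S_0/\m)\bigr)\cong \sExt^{\,n-d}_{S}(R,\omega_S).
\]
Since $S_0\twoheadrightarrow R_0$ and $E_{S_0}(S_0/\m)$ restricts to the injective hull over $R_0$ exactly as in the adjointness computation already used in the proof of \Cref{regHomSeq}, the left-hand side is canonically $\widehat{\omega_R}$, which by $^*$-completeness is $\omega_R$. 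This gives the claimed isomorphism.

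**I expect the main obstacle** to be setting up the graded local duality spectral sequence (or its collapse to the above isomorphism) cleanly in the $\ZZ$-graded, not-necessarily-complete setting, keeping careful track of degree shifts and of the two residue fields involved (that of $S_0$ versus that of $R_0$). In the standard references duality is stated for $^*$-complete graded rings with a single residue field, so the honest work is in justifying that the Matlis dual over $S_0$ and over $R_0$ agree via the adjointness $\Hom_{S_0}(R_0,E_{S_0}(S_0/\m))\cong E_{R_0}(R_0/\m)$, and that forming $\sExt$ commutes with completion for a module-finite extension. Once those compatibilities are in place the grading is preserved throughout, and the collapse of the Ext spectral sequence is forced by the Cohen–Macaulay hypothesis on $S$ together with the concentration of $\HH^{\bullet}_{\MN}(R)$ in degree $d$. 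A secondary point to verify is that the finite generation needed to invoke \Cref{comIso} at the start is available, so that proving the statement after completion genuinely suffices.
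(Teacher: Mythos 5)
Your proposal is correct and follows essentially the same route as the paper's proof: reduce to the $^*$-complete case via \Cref{comIso}, identify $\HH^d_{\MM}(R)$ with $\HH^d_{\MN}(R)$, use the injective-hull change of rings $E_{R_0}(R_0/\m)\cong\Hom_{S_0}(R_0,E_{S_0}(S_0/\m_{S_0}))$ together with hom-tensor adjointness, and conclude by graded local duality over the Cohen--Macaulay ring $S$. The only cosmetic difference is that you state duality in the homological direction and dualize back (no spectral sequence is actually needed), whereas the paper applies \cite[Theorem 3.6.19(b)]{BH} directly to $\sHom_{S_0}(\HH^d_{\MN}(R),E')$.
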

\begin{proof}
By \Cref{sCompletion}(b) it suffices to show the isomorphism after $^*$-completions. Since $\phi(S_0) = R_0$, by $^*$-completing both $R$ and $S$ as $S$-modules we may assume that $R$ and $S$ are $^*$-complete. Let $\m_{S_0}$ and $\m_{R_0}$ denote the maximal ideals of $S_0$ and $R_0$, respectively. Write $E' := E_{S_0} ( S_0/ \m_{S_0})$ and $E :=  E_{R_0} ( R_0/ \m_{R_0})$.
By \cite[Exercise 13]{HuLLC} we have $E \cong \Hom_{S_0} (R_0, E')$. 
From the hom-tensor adjointness and the graded version of the local duality theorem \cite[Theorem 3.6.19(b)]{BH}, we obtain the following isomorphisms
\begin{align*}
\omega_R &\cong \sHom_{R_0} ( {\HH}^d_{\MM} (R), E ) \\
	&\cong \sHom_{R_0} ( {\HH}^d_{\MM} (R),  \Hom_{S_0} (R_0, E')) \\	
	&\cong \sHom_{S_0} ( {\HH}^d_{\MM} (R) \otimes_{R_0} R_0,  E') \\
	&\cong \sHom_{S_0} ( {\HH}^d_{\MN} (R),  E') \\	
	&\cong \sExt^{n - d}_S (R, \omega_S),
\end{align*} 
and this completes the proof.
\end{proof}

\begin{cor}\label{lcmExt} Let $R$ be a Noetherian $\ZZ$-graded ring with a unique maximal homogeneous ideal $\MM$. Let $S = A[X_1,\dots,X_n]$ be a $\ZZ$-graded polynomial ring over a Gorenstein local ring $A$. Assume that there exists a surjective graded ring homomorphism $\phi : S \to R$ with $\phi(A) = R_0$ and $\MM$ is maximal. Let $\MN := \phi^{-1}(\MM)$, $g = \he \ker (\phi)$, and $\rho = \sum_{i= 1}^n \deg (X_i)$. Then one has $\omega_{S_{(\MN)}} \cong {S_{(\MN)}}(- \rho)$ and 
\begin{align*}
\omega_R  &\cong \sExt^g_{S_{(\MN)}} ( R, S_{(\MN)}) (- \rho)\\
	&\cong \sExt^g_S ( R, S) (- \rho).
\end{align*}
\end{cor}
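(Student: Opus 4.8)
The plan is to reduce the whole statement to \Cref{GddHK5.12} applied to a surjection onto $R$, after computing the graded canonical module of the polynomial ring localized at $\MN$; throughout write $d := \dim R_{\MM}$ and $T := S_{(\MN)}$.

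First I would exploit that $\MM$ is maximal. Then $R/\MM$ is a field which, being $\ZZ$-graded, is concentrated in degree $0$: otherwise the degrees of its nonzero homogeneous elements would form a subgroup $c\ZZ$ with $c \neq 0$, forcing $R/\MM \cong (R/\MM)_0[u,u^{-1}]$ for a homogeneous unit $u$ of degree $c$, which is not a field. Consequently every homogeneous element of $R$ outside $\MM$ is a unit, so $R = R_{(\MM)}$, and the same argument applied to $S/\MN \cong R/\MM$ shows that each variable $X_i$ (which maps to a homogeneous generator of $R$ over $R_0$, of nonzero degree) lies in $\MN$. In particular $R = T/\ker(\phi)T$ is a graded quotient of $T$, and $T$ is a $\ZZ$-graded Cohen-Macaulay ring (a localization of the Cohen-Macaulay ring $S$) with unique maximal homogeneous ideal $\MN T$ and local degree-zero part.

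The heart of the argument is the ``Furthermore'' claim $\omega_T \cong T(-\rho)$, which I would obtain from \Cref{ConvRegHomSeq}. The images $x_1, \dots, x_n$ of $X_1, \dots, X_n$ form a homogeneous regular sequence on $S$ with $S/(X_1, \dots, X_n) = A$; since each $x_i \in \MN$ they remain a regular sequence on $T$, with $T/(x_1, \dots, x_n)T$ equal to the homogeneous localization of $A$ at its maximal ideal, namely $A$ itself (as $A$ lies in degree $0$). Because $A$ is Gorenstein it has graded canonical module $\omega_A \cong A(0)$, so \Cref{ConvRegHomSeq} (with $a = 0$) both produces $\omega_T$ and identifies it as $T(0 - \rho) = T(-\rho)$.

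Now I would apply \Cref{GddHK5.12} to the graded surjection $T \twoheadrightarrow R$: here $R$ is a cyclic $T$-module, $T$ is Cohen-Macaulay with the graded canonical module just found, and the induced map sends $T_0$ onto $R_0$ since $\phi(A) = R_0$ and $A \subseteq T_0$. The dimension count is the delicate point: as $\ker(\phi)$ is homogeneous, all of its minimal primes are homogeneous and hence contained in $\MN$, so $\he(\ker(\phi)S_{\MN}) = \he\ker(\phi) = g$; Cohen-Macaulayness of $S_{\MN}$ then gives $\dim T_{\MN T} = \dim S_{\MN} = g + d$. Hence the Ext-index in \Cref{GddHK5.12} is $(g+d) - d = g$, so
\[
\omega_R \cong \sExt^{g}_{T}(R, \omega_T) \cong \sExt^{g}_{T}(R, T)(-\rho) = \sExt^{g}_{S_{(\MN)}}(R, S_{(\MN)})(-\rho),
\]
the first isomorphism of the corollary. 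For the second, I would use that $R = S/\ker(\phi)$ is finitely presented over $S$, so the flat localization $S \to S_{(\MN)}$ commutes with $\sExt$ and gives $\sExt^g_S(R,S)_{(\MN)} \cong \sExt^g_{S_{(\MN)}}(R, S_{(\MN)})$ (using $R_{(\MN)} = R_{(\MM)} = R$); since $\sExt^g_S(R,S)$ is a finitely generated graded $R$-module and homogeneous elements of $R$ outside $\MM$ are units, this module equals its own localization, yielding $\sExt^g_S(R,S) \cong \sExt^g_{S_{(\MN)}}(R, S_{(\MN)})$. The main obstacle is the computation of $\omega_T$: one must verify that the variables persist as a homogeneous regular sequence with Gorenstein quotient $A$ after localizing at $\MN$, so that \Cref{ConvRegHomSeq} simultaneously guarantees existence of the graded canonical module and pins down the shift by $\rho$; the auxiliary height identity $\he(\ker(\phi)S_{\MN}) = g$, which rests on the homogeneity of $\ker(\phi)$, is the other step needing care.
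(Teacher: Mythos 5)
Your strategy is essentially the paper's: compute $\omega_{S_{(\MN)}} \cong S_{(\MN)}(-\rho)$ by applying \Cref{ConvRegHomSeq} to the variables viewed as a homogeneous regular sequence with Gorenstein quotient $A$, feed this into \Cref{GddHK5.12} for the surjection $S_{(\MN)} \to R$, and identify the two Ext modules by homogeneous localization using $R = R_{(\MM)}$. Your explicit verification that the Ext index is $g$ --- the minimal primes of $\ker(\phi)$ are homogeneous, hence contained in $\MN$ by uniqueness of $\MM$, so $\he (\ker(\phi)S_{\MN}) = g$ and $\dim S_{\MN} = g + d$ by Cohen--Macaulayness --- is a point the paper leaves implicit, and it is correct.

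There is, however, one genuine gap: you assume every variable $X_i$ has nonzero degree (your parenthetical ``which maps to a homogeneous generator of $R$ over $R_0$, of nonzero degree''), and this is not a hypothesis of the corollary. Your argument that $X_i \in \MN$ works only for variables of nonzero degree; a variable of degree $0$ may lie outside $\MN$, and then it becomes a unit in $T = S_{(\MN)}$, so $X_1,\dots,X_n$ generate the unit ideal of $T$: they do not form a regular sequence on $T$, and $T/(X_1,\dots,X_n)T = 0 \neq A$, so the computation of $\omega_T$ collapses. Concretely, take $A = k$, $S = k[X]$ with $\deg X = 0$, and $\phi \colon S \to R = k$ sending $X \mapsto 1$; then $\MM = (0)$, $\MN = (X-1)$, and $X$ is a unit in $S_{(\MN)}$, so your proof of the ``Furthermore'' claim fails even though the claim itself is true (with $\rho = 0$). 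The paper devotes a paragraph to exactly this issue: if $X_i \notin \MN$, then $\deg X_i = 0$, so $\phi(X_i) \in R_0 = \phi(A)$ and one may choose $z_i \in A$ with $\phi(z_i) = \phi(X_i)$; replacing $X_i$ by the new variable $X_i - z_i \in \ker(\phi) \subseteq \MN$ changes neither the minors of the setup nor $\rho$, and after this change of variables your argument goes through verbatim.
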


\begin{proof}
Since $ \sExt^g_S ( R, S)$ is a graded $R$-module and $R = R_{(\MM)}$, we have
\begin{align*}
 \sExt^g_S ( R, S)  &\cong  \sExt^g_S ( R, S)_{(\MM)}   \cong \sExt^g_S ( R, S)_{(\MN)}  \cong \sExt^g_{S_{(\MN)}} ( R_{(\MN)}, S_{(\MN)}) \\
	  	  &\cong \sExt^g_{S_{(\MN)}} ( R_{(\MM)}, S_{(\MN)}) 	 \cong \sExt^g_{S_{(\MN)}} ( R, S_{(\MN)}). 
\end{align*}
Therefore, it suffices to show the first isomorphism. \\

First we show that we can reduce to the case where all $X_i$ are in $\MN$. If $\deg (X_i) \neq 0$, then $\phi(X_i)$ is a homogeneous element of degree not equal to zero in $R$. Since $\MM$ is maximal, there is no homogeneous unit of degree not equal to zero. Hence $\phi(X_i) \in \MM$, i.e., $X_i \in \MN$. Therefore, if $X_i \notin \MN$, then $\deg (X_i) = 0$. Suppose $X_i \notin \MN$. 
Since $\phi (X_i) \in R_0 = \phi(A)$, there exist $z_i \in A \subseteq S_0$ such that $z_i = \phi(X_i)$. Since $X_i - z_i$ is in $\ker (\phi)$, $X_i - z_i \in \MN$. Replacing the variable $X_i$ by $X_i - z_i$, we may assume that $X_i \in \MN$. \\

Since $X_i \in \MN$ for all $i$, we have $\MN = (\m, X_1,\dots, X_n)$ where $\m$ is the maximal ideal of $A$. Let $S' = S_{(\MN)}$.  Since $\MN = \phi^{-1} (\MM)$, $\phi$ factors through $S'$. Write $\phi': S' \to R$ for the ring homomorphism induced by $\phi$. Since $S'$ and $R$ have unique maximal homogeneous ideals, $\phi'$ surjective, and $\phi'(A) = R_0$, by \Cref {GddHK5.12} we are done once we have shown that $\omega_{S'} \cong S' (-\rho)$. 
By \Cref{sCompletion}(b) it suffices to show the isomorphism after $^*$-completion. Hence we may assume that $S'$ is $^*$-complete. 
 Write $\textbf{\underline{X}} = X_1,\dots, X_n$. 
Then $\textbf{\underline{X}}$ form a homogeneous regular sequence on $S'$, and $S' / (\textbf{\underline{X}}) S' \cong S/(\textbf{\underline{X}})_{(\MN)} \cong A_{(\MN)} = A_{\m} = A$. 
Since $A$ is a Gorenstein local ring concentrated in degree zero, we have $\omega_A \cong A$. Then $S'/ (\textbf{\underline{X}})S' \cong A \cong \omega_A \cong \omega_{S'/ (\textbf{\underline{X}})S' }$ and  \Cref{ConvRegHomSeq} imply the isomorphism $\omega_{S'} \cong S' (-\rho)$. This completes the proof.
\end{proof}

Recall that a Noetherian $\ZZ$-graded ring $R$ with a unique maximal homogeneous ideal is called \ti{quasi-Gorenstein} if $\omega_R \cong R(a)$ for some $a \in \ZZ$. If the unique maximal ideal is maximal, the number $a$ is well-defined, and it is called the \textbf{a}-invariant of the ring $R$. 
A {Gorenstein} $\ZZ$-graded ring is a quasi-Gorenstein ring which is Cohen-Macaulay. The ring $S_{(\MN)}$ in the remark above is a Gorenstein ring. Notice that this definition agrees with \cite[Theroem 3.6.19]{BH} when $R$ is Cohen-Macaulay. However, we do not require a ring to be Cohen-Macaulay.

\section*{Graded canonical modules of extended Rees algebras}
\begin{prop}\label{prop:TqGorRGor}
Let $(R,\m)$ be a Cohen-Macaulay local ring having a canonical module $\omega_R$. Let $I \subseteq \m$ be an $R$-ideal. If $R[It,t^{-1}]$ is quasi-Gorenstein, then $R$ is Gorenstein. 
\begin{proof}
The canonical module of $R[t,t^{-1}]$ is isomorphic to $\omega_R [t,t^{-1}]$ \cite[Proposition 4.1]{Ao83}. 
Since $R[t,t^{-1}] = R[It,t^{-1}]_{(t^{-1})^{-1}}$ and canonical modules localize, $\omega_R [t,t^{-1}] \cong (\omega_{R[It,t^{-1}]})_{ {(t^{-1})}^{-1}} \cong (R[It,t^{-1}])_{ {(t^{-1})}^{-1}}(a) = R[t,t^{-1}](a)$ where $a$ is the \textbf{a}-invariant of $R[It,t^{-1}]$. Hence we have $R \cong \omega_R$, i.e., $R$ is Gorenstein. 
\end{proof}
\end{prop}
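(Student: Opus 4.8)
The plan is to localize away the exceptional divisor: inverting the homogeneous nonzerodivisor $t^{-1}$ turns $R[It,t^{-1}]$ into the Laurent polynomial ring $R[t,t^{-1}]$, and both the quasi-Gorenstein hypothesis and the canonical module pass through this localization. Concretely, I would first record the identification $R[It,t^{-1}]_{(t^{-1})^{-1}} \cong R[t,t^{-1}]$, since inverting $t^{-1}$ collapses each graded piece $I^i t^i$ to $R t^i$.

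Next, the hypothesis that $R[It,t^{-1}]$ is quasi-Gorenstein supplies a graded isomorphism $\omega_{R[It,t^{-1}]} \cong R[It,t^{-1}](a)$, where $a$ is its \textbf{a}-invariant. Localizing this isomorphism at $t^{-1}$ and using that the graded canonical module commutes with localization at a homogeneous element, I obtain a graded isomorphism $\omega_{R[t,t^{-1}]} \cong R[t,t^{-1}](a)$. On the other hand, Aoyama's computation of the canonical module of a Laurent extension gives $\omega_{R[t,t^{-1}]} \cong \omega_R[t,t^{-1}]$. Comparing the two yields a degree-preserving $R[t,t^{-1}]$-isomorphism $\omega_R[t,t^{-1}] \cong R[t,t^{-1}](a)$.

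Finally, I would descend to $R$ by reading off degree-zero strands: since the isomorphism preserves degrees and $[R[t,t^{-1}]]_0 = R$, restricting to degree $0$ produces an $R$-module isomorphism $\omega_R \cong R t^a \cong R$. Thus $R$ is quasi-Gorenstein, and being also Cohen-Macaulay it is Gorenstein. I expect the main friction to be bookkeeping rather than depth: one must check that the graded canonical module genuinely localizes at $t^{-1}$ while carrying the grading shift $(a)$ intact, and that the final passage from the graded isomorphism over $R[t,t^{-1}]$ down to the ungraded isomorphism $\omega_R \cong R$ via degree-zero components is legitimate.
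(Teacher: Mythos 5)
Your proposal is correct and follows essentially the same route as the paper: identify $R[It,t^{-1}]_{(t^{-1})^{-1}}$ with $R[t,t^{-1}]$, localize the quasi-Gorenstein isomorphism $\omega_{R[It,t^{-1}]} \cong R[It,t^{-1}](a)$ at the homogeneous element $t^{-1}$, compare with Aoyama's description $\omega_{R[t,t^{-1}]} \cong \omega_R[t,t^{-1}]$, and conclude $\omega_R \cong R$. The only difference is that you spell out the final descent step (reading off the degree-zero strand of the graded isomorphism $\omega_R[t,t^{-1}] \cong R[t,t^{-1}](a)$), which the paper leaves implicit in the phrase ``Hence we have $R \cong \omega_R$.''
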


\begin{lem}\label{lem:canModofG}
Let $(R,\m)$ be a Cohen-Macaulay local ring having a canonical module $\omega_R$ and $I \subseteq \m$ an ideal. If $R[It,t^{-1}]$ is quasi-Gorenstein, then the canonical module $\omega_{\gr_I(R)}$ of $\gr_I(R)$ is of rank $1$, and  there exists an inclusion $\gr_I (R) (\rho) \subseteq \omega_{\gr_I (R)}$ of graded $\gr_I(R)$-modules where $\rho = a( R[It, t^{-1}]) -1$.
\begin{proof}
Write $T = R[It,t^{-1}]$ and $G = \gr_I(R)$. We need to show that $(\omega_G)_\p \cong G_\p$ for all the associated primes of $\omega_G$. Since $G$ is equidimensional and unmixed, $\Ass(\omega_G) = \Min(G)$ \cite[(1.7)]{Ao83}. 
Let $\pi: T \stackrel{\hbox{nat}} {\longrightarrow} G$ and $\p \in \Min(G)$. Write $P = \pi^{-1}(\p)$. Since $T$ satisfies Serre's condition $(S_2)$ and $P \in \Min (T/ (t^{-1}))$, $P$ is of height $1$. Hence $T_P$ is Gorenstein. Since $G_\p \cong T_P/(t^{-1})_P$, $G_\p$ is Gorenstein, and this shows that $(\omega_G)_\p \cong G_\p$ is of rank $1$.\\

Now, we show the inclusion $\gr_I (R) (\rho) \subseteq \omega_{\gr_I (R)}$. Let $S := R[X_1,\dots,X_n]$ be a $\ZZ$-graded polynomial ring over $R$ such that $\phi: S  \to R[It, t^{-1}]$ is a surjective homogeneous ring homomorphism. Let $H = \ker (\phi)$ and $g = \he H$. Observe that $G = S/(H,h)$ where $\phi(h) = t^{-1}$. From the exact sequence of graded $T$-modules $0 \rightarrow T(1) \stackrel{t^{-1}}{\longrightarrow} T \rightarrow G \rightarrow 0 $, 
we have an exact sequence of graded $\Ext$ modules
\[
0 \rightarrow \Ext^g_S( T, S)  \stackrel{t^{-1}}{\rightarrow} \Ext^g_S( T(1), S) \rightarrow \Ext^{g+1}_S(G,S).
\]
By \Cref{lcmExt} this shows that 
\begin{equation}\label{LCT} 
0 \rightarrow \omega_T \stackrel{t^{-1}}{\rightarrow} \omega_T (-1) \rightarrow \omega_G,
\end{equation}
is exact and the the result follows since $\omega_T \cong T(a)$, where $a$ is the \textbf{a}-invariant of $T$. 
\end{proof}
\end{lem}

\begin{thm}\label{tgrelation}
Let $(R,\m)$ be a Cohen-Macaulay local ring having a canonical module $\omega_R$ and $I \subseteq \m$ an ideal.  Assume that a graded canonical module $\omega_{R[It, t^{-1}]}$ of $R[It, t^{-1}]$ satisfies Serre's condition $(S_3)$. We have the following statements. 
\begin{enumerate}[$(a)$]
\item If $R[It,t^{-1}]$ is quasi-Gorenstein, then $(\omega_{R[It,t^{-1}]} / t^{-1} \omega_{R[It,t^{-1}]}) (1)$ is graded canonical module of $\gr_I (R)$.
\item $R[It,t^{-1}]$ is quasi-Gorenstein if and only if $\gr_I(R)$ is quasi-Gorenstein. 
\end{enumerate}
\begin{proof}
Let $G = \gr_I (R)$ and $T = R[It,t^{-1}]$. Let $\omega_G$ be the graded canonical module of $G$. \\
\noindent (a): We have
\[
0 \rightarrow (\omega_T / t^{-1} \omega_T) (-1) \rightarrow \omega_G \rightarrow L \rightarrow 0
\]
where $L$ is the cokernel of the natural map in \eqref{LCT}. We show that $L = 0$. It is equivalent to showing that $L_\p = 0$ for all $\p \in \Ass(L)$. 
Since $\omega_T$ satisfies Serre's condition $(S_3)$, $\omega_T / t^{-1} \omega_T$ satisfies Serre's condition $(S_2)$ as a $G$-module. Let $\p \in \Ass(L)$. Since $T_P$ is Gorenstein for all prime ideals of height at most $2$, $G_\p$ is Gorenstein for all prime ideals of height at most $1$. Hence if $\p \in \Ass(L)$, then $\he \p \ge 2$. Since $\p$ is an associated prime of $L$, $\depth L_\p = 0$. However, $\depth (\omega_T / t^{-1} \omega_T)_\p \ge 2$ and $\depth (\omega_G)_\p \ge 2$. This implies that $\depth L_\p \ge 1$, and this is a contradiction. \\

\noindent$(b)$: The forward direction follows immediately from part $(a)$. For the other direction, we only need to show that $\omega_T$ is a cyclic faithful $T$-module. 
From the isomorphism $\omega_G \cong \omega_T /t^{-1} \omega_T$ and $\mu_T (\omega_T) = \lambda_T (\omega_T / \MM_T \omega_T)= \mu_G (\omega_T/ t^{-1} \omega_T) = 1$, where $\MM_T$ is the maximal homogeneous ideal of $T$, we conclude that $\omega_T$ is cyclic. 
Here, $\lambda(-)$ denotes the length of a module.
Also, $\omega_T$ is faithful since $T$ is unmixed cf. \cite[(1.8)(a)(c)]{Ao83}. Therefore  $\omega_T \cong T(a(G) + 1)$.
\end{proof}
\end{thm}

\section*{The Main theorems}
It is interesting to see under which conditions quasi-Gorenstein extended Rees algebras are Gorenstein. To this end, Heinzer, M.-K.\ Kim, and Ulrich posed the next question. In this section we present two cases which give an affirmative answer to the question.

\begin{ques}[{\cite[Question 4.11]{HKU05}}]\label{Q:HKU05}
Let $(R,\m)$ be a local Gorenstein ring. Let $I$ be an $\m$-primary ideal. If the extended Rees algebra $R[It,t^{-1}]$ is quasi-Gorenstein, then is it Gorenstein? 
\end{ques}

In the same paper, the authors characterized the quasi-Gorenstein property of the extended Rees algebra in terms of colon ideals. 

\begin{prop}[{\cite[Theorem 4.1]{HKU05}}]\label{HKU4.1} Let $(R,\m)$ be a Gorenstein local ring of dimension $d$, and let $I$ be an $\m$-primary ideal. Assume that $J \subseteq I$ is a reduction of $I$ with $\mu(J) = d$. Let $r:=r_J(I)$ be  the reduction number of $I$ with respect to $J$, and let $k$ be an integer such that $k \ge r$. Then the graded canonical module $\omega_{R[It, t^{-1}]}$ of $R[It, t^{-1}]$ has the form 
\[
\omega_{R[It, t^{-1}]} \cong \bigoplus_{i \in \ZZ}(J^{i+k}:_R I^k)t^{i+d-1}.
\]
In particular, for $a \in \ZZ$,  the following are equivalent$:$
\begin{enumerate}[$(a)$]
\itemsep0em 
\item $R[It, t^{-1}]$ is quasi-Gorenstein with \textbf{a}-invariant $a$.
\item $J^i:_R I^r = I^{i+a-(r-d+1)}$ for every $i \in \ZZ$.
\end{enumerate}
\end{prop}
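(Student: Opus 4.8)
The plan is to compute $\omega_T$ for $T := R[It,t^{-1}]$ by comparing $T$ with the extended Rees algebra $A := R[Jt,t^{-1}]$ of the reduction $J$, whose canonical module is transparent, and then transferring the information via \Cref{GddHK5.12}. Since $J$ is a reduction of the $\m$-primary ideal $I$ it is itself $\m$-primary, and $\mu(J) = d = \dim R$ forces its generators to be a system of parameters, hence a regular sequence $f_1,\dots,f_d$ because $R$ is Cohen--Macaulay. Then $A/(t^{-1})A \cong \gr_J(R) \cong (R/J)[X_1,\dots,X_d]$ and, as $t^{-1}$ is a homogeneous nonzerodivisor, $A$ is Cohen--Macaulay; moreover $R/J$ is Gorenstein Artinian, so $\gr_J(R)$ is Gorenstein with $\omega_{\gr_J(R)} \cong \gr_J(R)(-d)$. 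Applying \Cref{ConvRegHomSeq} to the degree $-1$ nonzerodivisor $t^{-1}$ then yields $\omega_A \cong A(1-d)$; alternatively one presents $A$ as the complete intersection $R[X_1,\dots,X_d,U]/(UX_i - f_i)$ and reads off the same shift from \Cref{lcmExt}.

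Because $J$ is a reduction of $I$, we have $JI^k = I^{k+1}$ for $k \ge r$, so $T$ is a finitely generated $A$-module; also $A_0 = R = T_0$ and $\dim A_{\MN} = \dim T_{\MM} = d+1$. Hence \Cref{GddHK5.12} gives $\omega_T \cong \sHom_A(T, \omega_A) \cong \sHom_A(T,A)(1-d)$. Now $A$ and $T$ are graded submodules of $R[t,t^{-1}] = A_{t^{-1}}$ of rank one, so I would identify $\sHom_A(T,A)$ with the colon $A :_{R[t,t^{-1}]} T$. A homogeneous element $ct^{j}$ with $c \in R$ lies in this colon exactly when $cI^i \subseteq J^{i+j}$ for all $i \in \ZZ$, so its degree-$j$ component is $\bigl(\bigcap_{i\in\ZZ}(J^{i+j}:_R I^i)\bigr)t^{j}$.

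The heart of the proof, and the step I expect to be the main obstacle, is to show that this intersection collapses to $J^{k+j}:_R I^k$ for every $k \ge r$. The key input is that a regular sequence satisfies $J^{m+1}:_R J = J^m$ for all $m$, which one sees by passing to $\gr_J(R) = (R/J)[X_1,\dots,X_d]$, where multiplication by a variable is injective. Writing $c_i := J^{i+j}:_R I^i$, the inclusion $J\subseteq I$ gives $J(cI^i) \subseteq cI^{i+1}$, so $c \in c_{i+1}$ implies $J(cI^i)\subseteq J^{i+1+j}$ and then the colon property gives $cI^i \subseteq J^{i+j}$; thus $(c_i)$ is non-increasing in $i$. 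For $i \ge r$ one has $I^{i+1} = JI^i$, whence $c_i \subseteq c_{i+1}$ as well, so the sequence stabilizes from $i = r$ onward and $\bigcap_{i}c_i = c_k$ for all $k \ge r$. Substituting back, $\sHom_A(T,A) \cong \bigoplus_j (J^{k+j}:_R I^k)t^{j}$; applying the shift by $1-d$ and reindexing by $i = e - (d-1)$ produces the asserted formula $\omega_T \cong \bigoplus_i (J^{i+k}:_R I^k)t^{i+d-1}$.

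For the equivalence, $T$ is quasi-Gorenstein with \textbf{a}-invariant $a$ precisely when $\omega_T \cong T(a)$ as graded modules, and I would realize $T(a)$ as the submodule $t^{-a}T = \bigoplus_e I^{e+a}t^{e}$ of $R[t,t^{-1}]$. Both $\omega_T$ and $t^{-a}T$ are rank-one graded submodules of $R[t,t^{-1}]$, so any graded isomorphism between them is multiplication by some $\gamma$ in the degree-$0$ part $R$; comparing components in degrees $e \ll 0$, where both modules equal $Rt^{e}$, forces $\gamma$ to be a unit, so the isomorphism holds iff the two agree in every degree. Taking $k = r$, this degreewise equality reads $J^{e-d+1+r}:_R I^r = I^{e+a}$ for all $e$, and the substitution $i = e - d + 1 + r$ rewrites it as $J^i :_R I^r = I^{i+a-(r-d+1)}$ for all $i$, which is condition $(b)$. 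The only delicate points besides the colon stabilization are the passage $\sHom_A(T,A)\cong A:_{R[t,t^{-1}]}T$, which uses that $t^{-1}$ is a nonzerodivisor making $R[t,t^{-1}]$ the relevant ring of homogeneous fractions, and the unit argument just described.
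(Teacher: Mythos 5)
Your proof is correct, but note that there is nothing in the paper to compare it against: Proposition \ref{HKU4.1} is imported verbatim from \cite[Theorem 4.1]{HKU05}, with no proof given here. What you have produced is a complete argument, and one that in fact runs parallel to the strategy of that reference: dualize the finite birational extension $A = R[Jt,t^{-1}] \subseteq T = R[It,t^{-1}]$ into the Gorenstein complete-intersection subalgebra $A$ and compute the resulting colon ideal degree by degree. The added value of your write-up is that it is carried out entirely with the paper's own toolkit: $\omega_A \cong A(1-d)$ via \Cref{ConvRegHomSeq} (or \Cref{lcmExt} applied to the presentation $R[X_1,\dots,X_d,U]/(UX_i-f_i)$), then $\omega_T \cong \sHom_A(T,\omega_A)$ via \Cref{GddHK5.12} (both localized dimensions equal $d+1$, so the Ext-index is zero), and finally the identification $\sHom_A(T,A) \cong A :_{R[t,t^{-1}]} T$, which is legitimate because $t^{-1}$ is a nonzerodivisor and $A_{t^{-1}} = T_{t^{-1}} = R[t,t^{-1}]$. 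The two steps carrying the real content are done correctly: the collapse of $\bigcap_{i}(J^{i+j}:_R I^i)$ to $J^{k+j}:_R I^k$ for any $k \ge r$, which you obtain from $J^{m+1}:_R J = J^m$ (valid since $J$ is generated by a system of parameters, hence a regular sequence in the Cohen--Macaulay ring $R$) giving monotonicity, combined with stabilization from $I^{i+1}=JI^i$ for $i \ge r$; and the rigidity step in the equivalence, where a graded isomorphism between two rank-one graded submodules of $R[t,t^{-1}]$ that both agree with $Rt^e$ in all degrees $e \ll 0$ must be multiplication by a unit of $R$, hence an equality of submodules, so that quasi-Gorensteinness with ${\bf a}$-invariant $a$ becomes the degreewise identity $J^i:_R I^r = I^{i+a-(r-d+1)}$. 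The degree bookkeeping (shift by $1-d$, reindexing $i = e-d+1$, and taking $k=r$ in the final substitution) also checks out, so your argument would serve as a self-contained proof of the cited result within this paper.
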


\begin{dfn} Let $R$ be a Noetherian local ring.
 Let $I$ be an ideal and $J$ a minimal reduction of $I$. Then the \textit{index of nilpotency}, denoted by $s_J(I)$, is $\min \{ i  \mid I^{i+1} \subset J \}$, and $s(I) = \max \{ s_J(I) \mid J \hbox{ is a minimal reduction of I } \}$.  
\end{dfn}

\begin{lem}[{\cite[Remark 4.4]{HKU05}}]\label{HKU4.4}
We use the setting of \Cref{HKU4.1}. In addition, assume that $R[It,t^{-1}]$ is quasi-Gorenstein. Then one has 
\begin{enumerate}[$(a)$]
\item $s_J(I) - d + 1 \le a(R[It,t^{-1}]) \le r_J(I) - d  + 1$ and 
\item $\max\{ \; n \mid  I^{r_J(I)} \subseteq J^n \} = r_J(I) - d + 1 - a(R[It,t^{-1}])$.
\end{enumerate}
\end{lem}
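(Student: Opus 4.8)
The plan is to reduce the whole statement to the colon-ideal characterization in \Cref{HKU4.1}, proving part (b) first, since part (a) will then follow from (b) together with two elementary containment facts about powers of $I$ and $J$. Throughout I would write $a = a(R[It,t^{-1}])$, $r = r_J(I)$, $s = s_J(I)$, and adopt the conventions $I^\ell = R$ and $J^\ell = R$ for $\ell \le 0$; note $I^\ell = R$ if and only if $\ell \le 0$, since $I \subseteq \m$ is proper. Because $\mu(J) = d$ and $J$ is a reduction of the $\m$-primary ideal $I$, the ideal $J$ is $\m$-primary and generated by a system of parameters, hence by a regular sequence $x_1,\dots,x_d$ as $R$ is Cohen-Macaulay; in particular $\gr_J(R) \cong (R/J)[X_1,\dots,X_d]$ is a polynomial ring and $\bigcap_n J^n = 0$. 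These structural facts are the inputs I will use for the nontrivial inequality.

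For part (b) I would feed the equivalence $(a)\Leftrightarrow(b)$ of \Cref{HKU4.1} into the defining property of the colon. Since $R[It,t^{-1}]$ is quasi-Gorenstein, \Cref{HKU4.1} gives $J^i :_R I^r = I^{i + a - (r-d+1)}$ for every $i \in \ZZ$. For any $n$ one has $I^r \subseteq J^n$ if and only if $1 \in J^n :_R I^r$, i.e. if and only if $J^n :_R I^r = R$; by the displayed identity this holds exactly when $I^{n + a - (r-d+1)} = R$, that is, when $n + a - (r-d+1) \le 0$. Taking the largest such $n$ yields $\max\{n \mid I^r \subseteq J^n\} = r - d + 1 - a$, which is (b). Here $I^r \neq 0$ because $x_1^r \neq 0$, and $\bigcap_n J^n = 0$, so the maximum is a well-defined integer.

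For part (a) I would rewrite both inequalities through (b) as statements about $N := \max\{n \mid I^r \subseteq J^n\} = r - d + 1 - a$. The upper bound $a \le r - d + 1$ is equivalent to $N \ge 0$, which is immediate from $I^r \subseteq R = J^0$. The lower bound $a \ge s - d + 1$ is equivalent to $N \le r - s$, i.e. to $I^r \not\subseteq J^{r-s+1}$, and this containment is the crux. I would prove its contrapositive in the sharper form: if $I^{r-n+1} \not\subseteq J$ then $I^r \not\subseteq J^n$. Given $u \in I^{r-n+1} \setminus J$, its image $\bar u$ is a nonzero degree-$0$ element of $\gr_J(R) = (R/J)[X_1,\dots,X_d]$, while $x_1^{n-1} \in J^{n-1} \subseteq I^{n-1}$ has leading form $X_1^{n-1} \neq 0$. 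Because $(R/J)[X_1,\dots,X_d]$ is a polynomial ring, the product $\bar u \cdot X_1^{n-1}$ is again nonzero, so the leading forms do not cancel and $\ord_J(u\,x_1^{n-1}) = 0 + (n-1)$. Hence $u\,x_1^{n-1} \in I^r \setminus J^n$, proving $I^r \not\subseteq J^n$. Consequently any $n$ with $I^r \subseteq J^n$ forces $I^{r-n+1} \subseteq J$, whence $s \le r - n$ and $N \le r - s$; recalling the defining noncontainment $I^s \not\subseteq J$ of $s = s_J(I)$ (and $s \le r$, since $I^{r+1} = J I^r \subseteq J$) confirms the boundary case $n = r - s + 1$.

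The formal content of (b) and of the two inequalities in (a) is routine once \Cref{HKU4.1} is in hand; the one genuinely non-formal step, and where I expect to spend the most care, is the leading-form argument establishing $I^r \not\subseteq J^{r-s+1}$. The delicate point is that $R/J$ need not be a domain, so I must not assert that $\ord_J$ is additive for arbitrary products; the argument sidesteps this by multiplying the order-$0$ element $u$ specifically against the monomial $x_1^{n-1}$, for which non-vanishing of the product of leading forms in $(R/J)[X_1,\dots,X_d]$ is transparent. I would finally double-check the boundary conventions ($I^\ell = J^\ell = R$ for $\ell \le 0$, and $s \le r$) to ensure the indices $r-n+1$ and $r-s+1$ remain in the range where these computations are valid.
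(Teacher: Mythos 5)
Your proof is correct, including the boundary cases, but note that there is nothing in the paper to compare it against: the lemma is quoted from \cite[Remark 4.4]{HKU05} without proof, so what you have produced is the derivation the paper omits, built from \Cref{HKU4.1} --- which is indeed the right ingredient. Part (b) is, as you say, a formal consequence of the identity $J^i :_R I^r = I^{i+a-(r-d+1)}$ together with the observation that $I^\ell = R$ if and only if $\ell \le 0$, and the upper bound in (a) then follows from $I^r \subseteq J^0$. The one genuinely nontrivial point is $I^r \nsubseteq J^{r-s+1}$ (equivalently $N \le r-s$), and your leading-form argument is sound: since $J$ is a parameter ideal in a Cohen--Macaulay (indeed Gorenstein) local ring, its generators form a regular sequence, $\gr_J(R) \cong (R/J)[X_1,\dots,X_d]$, and the product of the nonzero degree-zero class $\bar u$ with the monomial $X_1^{n-1}$ cannot vanish; you are also right that one must not invoke additivity of $\ord_J$ for arbitrary products, since $R/J$ need not be a domain. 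For what it is worth, your claim is equivalent to the colon identity $J^n :_R x_1^{n-1} = J$ (a single-element sharpening of $J^n :_R J^{n-1} = J$), which holds because $J^{n-1}/J^n$ is a free, hence faithful, $R/J$-module; with that phrasing one gets the lower bound in one line: $I^{r-N+1}J^{N-1} \subseteq I^r \subseteq J^N$ forces $I^{r-N+1} \subseteq J^N :_R J^{N-1} = J$, so $s_J(I) \le r - N$. Both formulations rest on exactly the same polynomial-ring structure of $\gr_J(R)$, so this is a stylistic simplification rather than a different argument.
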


Now, we are ready to provide a positive answer to \Cref{Q:HKU05} for a class of monomial ideals in a polynomial ring. 
\begin{thm}\label{thm:qGor1}
Let $R$ be a polynomial ring in $d$-variables over a field and $I$ a monomial ideal of height $d$. Assume that $I$ has a $d$-generated monomial reduction. If $R[It, t^{-1}]$ is quasi-Gorenstein, then $R[It,t^{-1}]$ is Gorenstein.
\begin{proof}
We show that $\gr_I (R)$ is Cohen-Macaulay. Let $J  = (g_1, \dots, g_d)$ be the monomial reduction of $I$, and write $r := r_J(I)$. Let $u = r - d + 1 -a(R[It,t^{-1}])$ where $a(R[It,t^{-1}])$ is the \textbf{a}-invariant of $R[It,t^{-1}]$. Since $R[It,t^{-1}]$ is quasi-Gorenstein, we have $J^i :I^r = I^{i-u}$ for all $i$ by \Cref{HKU4.1}(b). The Cohen-Macaulayness of $\gr_I (R)$ follows by the Valabrega-Valla criterion \cite[Theorem 1.1]{VV78} once we have shown that $J \cap I^i \subseteq J I^{i-1}$ for $0 \le i \le r$. Recall that all ideals in question are monomial ideals. Let $a$ be an arbitrary monomial in $J \cap I^i$. Since $a \in J$ and $J$ is a monomial ideal generated by the $g_i$'s, we can write $a = a'g$ where $a' \in R$ and $g = g_j$ for some $j$. 
We want to show that $a' \in I^{i-1}$. 
For an arbitrary element $z \in I^r$, we have $az \in I^{i+r} = J^i I^r \subseteq J^i J^u$ where the last inclusion follows from \Cref{HKU4.4}(b). Then $az = a'zg \in J^{i+u}$, and this implies $a'z \in J^{i +u -1}$ since $g + J^2$ is a non zero-divisor on $\gr_J(R)$. Our choice of $z \in I^i$ was arbitrary. Hence we conclude that $a' \in J^{i + u -1}:I^r = I^{i-1}$. 
\end{proof}
\end{thm}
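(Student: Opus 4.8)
The plan is to reduce the Gorenstein assertion to a Cohen--Macaulayness statement and then verify the latter by a leading-form argument. Since $\gr_I(R) \cong R[It,t^{-1}]/(t^{-1})$ with $t^{-1}$ a homogeneous non-zerodivisor, the extended Rees algebra is Gorenstein exactly when it is Cohen--Macaulay and quasi-Gorenstein; as quasi-Gorensteinness is given, it suffices to show that $\gr_I(R)$ (equivalently $R[It,t^{-1}]$) is Cohen--Macaulay. Because $R$ is a polynomial ring in $d$ variables and $\he I = d$, the ideal $I$ is $\m$-primary, so the hypothesized $d$-generated monomial reduction $J = (g_1,\dots,g_d)$ is a parameter ideal, hence generated by a regular sequence. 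I would therefore invoke the Valabrega--Valla criterion, which reduces Cohen--Macaulayness of $\gr_I(R)$ to the inclusions $J \cap I^i \subseteq J I^{i-1}$ for $0 \le i \le r$, where $r := r_J(I)$.

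First I would record the numerical input coming from quasi-Gorensteinness. Setting $u := r - d + 1 - a(R[It,t^{-1}])$, \Cref{HKU4.1}(b) supplies the colon identity $J^i :_R I^r = I^{i-u}$ for every $i \in \ZZ$, while \Cref{HKU4.4}(b) identifies $u = \max\{ n \mid I^r \subseteq J^n \}$, so in particular $I^r \subseteq J^u$. These two facts are the engine of the argument.

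Next I would exploit that every ideal in sight is monomial, so it suffices to test the inclusion on monomials. Given a monomial $a \in J \cap I^i$, divisibility forces $a = a' g$ with $g = g_j$ for some $j$, and the goal becomes $a' \in I^{i-1}$, for then $a \in I^{i-1} J \subseteq J I^{i-1}$. To reach $a' \in I^{i-1}$ I would pass through $I^r$: for arbitrary $z \in I^r$ one has $az \in I^{i+r} = J^i I^r \subseteq J^i J^u = J^{i+u}$, using $r = r_J(I)$ and $I^r \subseteq J^u$, so $a'zg \in J^{i+u}$. Since the $g_j$ form a regular sequence, $\gr_J(R)$ is a polynomial ring over $R/J$ with the initial forms of the $g_j$ as variables, so $g + J^2$ is a non-zerodivisor on $\gr_J(R)$; a leading-form computation then upgrades $a'zg \in J^{i+u}$ to $a'z \in J^{i+u-1}$. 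As $z$ was arbitrary, $a' \in J^{i+u-1} :_R I^r = I^{(i+u-1)-u} = I^{i-1}$ by the colon identity, completing the Valabrega--Valla verification.

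I expect the main obstacle to be the non-zerodivisor step: one must know that the chosen generator $g$ has initial form a non-zerodivisor in $\gr_J(R)$, which rests on $J$ being a complete intersection so that $\gr_J(R)$ is a polynomial ring. This is precisely where the hypotheses $\he I = d$ and $J$ being $d$-generated enter, guaranteeing $J$ is a parameter ideal. A secondary point demanding care is the bookkeeping of the degree shift in the $\textbf{a}$-invariant and the exact form of the colon identity; pinning down the index $u$ correctly is what makes the final colon collapse to $I^{i-1}$ rather than an adjacent power.
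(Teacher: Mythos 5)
Your proposal is correct and follows essentially the same route as the paper's proof: reduce to Cohen--Macaulayness of $\gr_I(R)$ via Valabrega--Valla, use \Cref{HKU4.1}(b) for the colon identity $J^i :_R I^r = I^{i-u}$ and \Cref{HKU4.4}(b) for $I^r \subseteq J^u$, then run the same monomial factorization $a = a'g$ and the non-zerodivisor argument in $\gr_J(R)$ to conclude $a' \in I^{i-1}$. The only differences are cosmetic: you make explicit the justification that $g + J^2$ is a non-zerodivisor (via $J$ being a parameter ideal, hence a complete intersection), which the paper leaves implicit.
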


In the rest of this section we use the setting of \Cref{HKU4.1} and study quasi-Gorenstein extended Rees algebras under the condition that $s_J(I) = r_J(I)$ for some $d$-generated minimal reduction $J$ of $I$. This is a necessary condition if the associated graded ring $\gr_I(R)$ is Cohen-Macaulay by \cite[Theorem 1.1]{VV78}.

\begin{rmk}\label{simpleU}
Let $(R,\m)$ be a $d$-dimensional Gorenstein local ring and $I$ an $\m$-primary ideal. Assume that  $s_J(I) = r_J(I)$ for some $d$-generated minimal reduction $J$ of $I$. Then $R[It,t^{-1}]$ is quasi-Gorenstein if and only if $J^i : I^r = I^i$ for all $i \in \mathbb{Z}$.
\end{rmk}

\begin{proof}
This follows directly from \Cref{HKU4.1} and \Cref{HKU4.4}(a). 
\end{proof}

\begin{dfn}[{\cite{Ro00}}] 
Let $(R,\m)$ be a Noetherian local ring and $I$ an $\m$-primary ideal. The ideal $I$ called \textit{$n$-standard} if $J \cap I^i = J I^{i-1}$ for all $i \le n$.  
\end{dfn}

\begin{prop}
Let $(R,\m)$ be a $d$-dimensional Cohen-Macaulay local ring having a canonical module and $I$ an $\m$-primary ideal. Assume that $R[It,t^{-1}]$ is quasi-Gorenstein and $s_J(I) = r_J(I)$ for some $d$-generated minimal reduction $J$ of $I$. Then $I$ is 2-standard.
\begin{proof}
Since $R[It,t^{-1}]$ is quasi-Gorenstein and $s_J(I) = r_J(I)$, we have $J^i : I^r = I^{i}$ for all $i \in \mathbb{Z}$ by \Cref{simpleU}. We show that $J \cap I^2 \subseteq JI$. 
Write $J = (x_1,\dots,x_d)$, and let $a \in J \cap I^2$. Then we may write $a = \sum a_i x_i$ for some $a_i$ in $R$. For any $z \in I^r$, $a z \in I^r I^2 = I^{r+2} = J^2I^r \subseteq J^2$, that is $z \sum a_i x_i = \sum z a_i x_i \in J^2$. Since $J/J^2$ is a free $R/J$-module with basis $x_1 + J^2, \dots, x_d + J^2$, we obtain $z a_i \in J$. This implies that indeed $a_i \in J:I^r = I$.
\end{proof}
\end{prop}

\begin{lem}\label{a-inv2}
Let $(R,\m,\kk)$ be a $d$-dimensional Cohen-Macaulay local ring having a canonical module with infinite residue field $\kk$. Let $I$ be an $\m$-primary ideal. Assume that $R[It,t^{-1}]$ is quasi-Gorenstein and $s_J(I) = r_J(I)$ for some minimal reduction $J$ of $I$. 
\begin{enumerate}[$(a)$]
\item One has $r_J(I) = r(I)$ and $s_J(I) = s(I)$; in particular, $s(I) =  r(I)$. 
\item $a(\gr_I (R))  = a(R[It,t^{-1}]) - 1 = d - r_J (I)$ where $a(\gr_I(R))$ and $a(R[It,t^{-1}])$ are the \textbf{a}-invariants of $\gr_I (R)$ and $R[It,t^{-1}]$, respectively. 
\end{enumerate}
\begin{proof}
(a): By \Cref{HKU4.4}(a) we have $s_J(I) - d + 1 = a := a(R[It,t^{-1}]) = r_J(I) - d + 1$. Let $L, K \subseteq I$ be minimal reductions with $r_{L} (I) = r (I)$ and $s_{K} (I) = s(I)$, respectively. Apply \Cref{HKU4.4}(a) to see that $s (I) -d + 1 \le a \le r(I) - d + 1$. This shows that $s(I) \le s_J(I)$ and $r(I) \ge r_J(I)$. The other direction of the inequalities follows from the definition of $s (I)$ and $r(I)$. \\

\noindent (b): From the exact sequence $\eqref{LCT}$ in the proof of \Cref{lem:canModofG}, we have $a(\gr_I(R)) \ge a(R[It,t^{-1}]) - 1 = r -d$, and the other inequality follows from a result of Trung \cite[Proposition 3.2]{Tr87}.
\end{proof}
\end{lem}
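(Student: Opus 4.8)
The plan is to reduce everything to the Gorenstein setting and then exploit that the $\mathbf{a}$-invariant $a := a(R[It,t^{-1}])$ is an invariant of the ring itself, hence independent of the chosen reduction, whereas the reduction number and the index of nilpotency vary with the reduction. First I would observe that, since $R$ is Cohen-Macaulay with a canonical module and $R[It,t^{-1}]$ is quasi-Gorenstein, \Cref{prop:TqGorRGor} forces $R$ to be Gorenstein; moreover, because $\kk$ is infinite and $I$ is $\m$-primary its analytic spread equals $d$, so every minimal reduction of $I$ is minimally generated by exactly $d$ elements. Thus the hypotheses of \Cref{HKU4.1} and \Cref{HKU4.4} are satisfied by every minimal reduction of $I$, and in particular by the distinguished reduction $J$ with $s_J(I)=r_J(I)$.

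For part $(a)$, I would apply \Cref{HKU4.4}$(a)$ first to $J$ itself: the two-sided estimate $s_J(I)-d+1 \le a \le r_J(I)-d+1$ collapses, since $s_J(I)=r_J(I)$, to the exact value $a = r_J(I)-d+1 = s_J(I)-d+1$. Next I would pick minimal reductions $L$ and $K$ realizing $r_L(I)=r(I)$ and $s_K(I)=s(I)$. Applying \Cref{HKU4.4}$(a)$ to $L$ yields $a \le r(I)-d+1$, while applying it to $K$ yields $s(I)-d+1 \le a$; substituting $a = r_J(I)-d+1$ gives $r_J(I) \le r(I)$ and $s(I) \le s_J(I)$. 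Because $r(I)=\min_J r_J(I) \le r_J(I)$ and $s(I)=\max_J s_J(I) \ge s_J(I)$ by definition, both inequalities are forced to be equalities, so $r_J(I)=r(I)$ and $s_J(I)=s(I)$, whence $s(I)=r(I)$.

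For part $(b)$, I would combine the graded exact sequence \eqref{LCT} with Trung's bound. Writing $T := R[It,t^{-1}]$ and $G := \gr_I(R)$, quasi-Gorensteinness gives $\omega_T \cong T(a)$, so the cokernel of $\omega_T \xrightarrow{t^{-1}} \omega_T(-1)$ in \eqref{LCT} is $G(a-1)$ and injects into $\omega_G$; comparing the resulting initial degrees yields $a(G) \ge a-1$. The reverse inequality $a(G) \le r(I)-d$ is precisely \cite[Proposition 3.2]{Tr87}. By part $(a)$ one has $a = r(I)-d+1$, so $a-1 = r(I)-d$, and the two bounds squeeze the value to $a(\gr_I(R)) = a(R[It,t^{-1}])-1 = r(I)-d = r_J(I)-d$, which is the asserted identity.

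The main obstacle is conceptual rather than computational, and it lies in part $(a)$: the bookkeeping only works because $a$ is a single scalar attached to $T$, so the bounds furnished by \Cref{HKU4.4}$(a)$ for the three different reductions $J$, $L$, $K$ all constrain the \emph{same} $a$; it is this rigidity, together with the collapse of the bound for $J$ forced by $s_J(I)=r_J(I)$, that simultaneously pins down $r(I)$ and $s(I)$. In part $(b)$ the only delicate point is checking that the degree-shift conventions in \eqref{LCT} are compatible with Trung's normalization of the $\mathbf{a}$-invariant, so that the lower and upper bounds actually meet.
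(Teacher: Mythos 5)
Your proposal is correct and follows essentially the same route as the paper: part $(a)$ applies \Cref{HKU4.4}$(a)$ to the distinguished reduction $J$ (where the bound collapses since $s_J(I)=r_J(I)$) and then to reductions $L$, $K$ realizing $r(I)$ and $s(I)$, using that all three estimates constrain the same invariant $a$; part $(b)$ combines the exact sequence \eqref{LCT} (giving $a(G)\ge a(T)-1$) with Trung's bound $a(G)\le r(I)-d$. Your additional preliminary remarks (Gorensteinness of $R$ via \Cref{prop:TqGorRGor} and $d$-generation of minimal reductions from the infinite residue field) only make explicit hypotheses the paper leaves implicit.
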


\begin{thm}\label{thm:qGor2}
Let $(R,\m)$ be a Cohen-Macaulay local ring having a canonical module with infinite residue field. Let $I$ be an $\m$-primary ideal. Assume that $I$ is an almost complete intersection ideal, i.e., $\mu (I) \le \he I  + 1$. The following are equivalent:
\begin{enumerate}[$(a)$]
\item $R[It,t^{-1}]$ is Gorenstein. 
\item $R[It,t^{-1}]$ is quasi-Gorenstein and $s_J(I) = r_J(I)$ for some minimal reduction $J$ of $I$.
\end{enumerate}
\begin{proof}
Recall that if $I$ is a complete intersection, i.e., $\mu (I) = \he I$, then $R[It,t^{-1}]$ is a Cohen-Macaulay ring, and the equivalence follows immediately. Suppose that $\mu (I) = \he I + 1$. The implication $(a) \Longrightarrow (b)$ is obvious. Let $d = \dim R$ and $J$ be a minimal reduction such that $r = r_J(I) = s_J(I)$. Choose a generating set $x_1, \dots, x_d$ of $J$. Since $J$ is a minimal reduction of $I$, a generating set of $J$ can be extended to that of $I$. Hence we may write $I = J + (x)$ for some $x$ in $R$. By \cite[Theorem 1.1]{VV78} it suffices to show that $J \cap I^{i} \subseteq JI^{i-1}$ for $1 \le i \le r$.  Since $I = J + (x)$, 
\begin{align*}
J \cap I^{i} \subseteq JI^{i-1} &\Longleftrightarrow J  \cap (J+(x))^i \subseteq JI^{i-1} \\
	&\Longleftrightarrow J \cap (J^i + xJ^{i-1} + \cdots + (x)^i )\subseteq JI^{i-1} \\
	&\stackrel{(\star)}\Longleftrightarrow J^i + xJ^{i-1} + \cdots  + x^{i-1}J +  J \cap (x)^i \subseteq JI^{i-1}\\ 
	&\Longleftrightarrow J \cap (x)^i \subseteq JI^{i-1},
\end{align*}
where the equivalence ($\star$) follows from the containment $J^i, \dots, x^{i-1} J \subseteq J $. 
First, we claim that $J \cap (x)^i \subseteq I^{i+1}$. 
Observe that $J \cap (x)^i = J \cap (x^i) = x^i (J :_R x^i) = x^i ( J:_R I^i)$. One has $I^r  x^i(J: I^i) \subseteq I^r I^i (J:I^i) \subseteq I^{r+i} (J : I^i) = J^i I^r (J : I^i) = J^i I^{r-i} I^i (J : I^i) \subseteq J^i I^{r-i} J \subseteq J^{i+1}$. Therefore $x^i ( J: I^i) \subseteq J^{i+1} : I^r = I^{i+1}$, and the shows the claim. 
Now, we apply decreasing induction on $i$. When $i = r$, $J \cap (x)^r \subseteq I^{r+1} = J I^r \subseteq J I^{r-1}$. For $i < r$, $J \cap I^{i+1} = J I^i$ by the induction hypothesis. Hence we have $J \cap (x)^i \subseteq J \cap I^{i+1} = JI^i \subseteq J I^{i-1}$.
\end{proof}
\end{thm}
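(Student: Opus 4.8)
The plan is to prove the nontrivial implication $(b)\Rightarrow(a)$; the reverse is immediate, since a Gorenstein ring is quasi-Gorenstein, and the Cohen-Macaulayness of $R[It,t^{-1}]$ (equivalently of $\gr_I(R)$) forces $s_J(I)=r_J(I)$ for a minimal reduction by the Valabrega-Valla criterion \cite[Theorem 1.1]{VV78}. So assume $(b)$. By \Cref{prop:TqGorRGor} the quasi-Gorensteinness of $R[It,t^{-1}]$ already makes $R$ Gorenstein, which is what lets me invoke the colon characterizations below. Since $R[It,t^{-1}]$ is quasi-Gorenstein, to prove it is Gorenstein I only need Cohen-Macaulayness, i.e. that $\gr_I(R)$ is Cohen-Macaulay. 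By \cite[Theorem 1.1]{VV78} this reduces to the inclusions $J\cap I^i\subseteq JI^{i-1}$ for $1\le i\le r$, where $r:=r_J(I)=s_J(I)$.

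First I would dispose of the complete intersection case $\mu(I)=\he I$: there $\gr_I(R)$ is a polynomial ring over $R/I$, hence Cohen-Macaulay, so quasi-Gorensteinness gives Gorensteinness at once. This reduces the problem to $\mu(I)=\he I+1=d+1$, where $d=\dim R$ (recall $\he I=d$ as $I$ is $\m$-primary). Because $\kk$ is infinite, a minimal reduction $J$ is minimally generated by $d$ elements $x_1,\dots,x_d$, and since a generating set of $J$ extends to one of $I$, I may write $I=J+(x)$ for a single extra element $x$. This single-generator reduction is exactly what the almost complete intersection hypothesis buys.

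The heart of the argument is the reduction to one power of $x$. Expanding $I^i=(J+(x))^i$, every term carrying a factor of $J$ already lies in $JI^{i-1}$, so $I^i\equiv(x^i)\ (\mathrm{mod}\ J)$; consequently $J\cap I^i\subseteq JI^{i-1}$ is equivalent to $J\cap(x)^i\subseteq JI^{i-1}$, and moreover $J:_R x^i=J:_R I^i$. The key step, which I expect to be the main obstacle, is to show $J\cap(x)^i=x^i(J:_R I^i)\subseteq I^{i+1}$. The trick is to multiply by $I^r$ and feed the result into the colon characterization of quasi-Gorensteinness: under $(b)$, \Cref{simpleU} gives $J^{i+1}:_R I^r=I^{i+1}$. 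Using $I^{r+i}=J^iI^r$ (from $I^{r+1}=JI^r$) and $I^i(J:_R I^i)\subseteq J$, the chain
\begin{align*}
I^r\,x^i(J:_R I^i)
&\subseteq I^{r+i}(J:_R I^i)
= J^iI^r(J:_R I^i)\\
&= J^iI^{r-i}\,I^i(J:_R I^i)
\subseteq J^iI^{r-i}J
\subseteq J^{i+1}
\end{align*}
shows $x^i(J:_R I^i)\subseteq J^{i+1}:_R I^r=I^{i+1}$, whence $J\cap(x)^i\subseteq J\cap I^{i+1}$.

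Finally I would close by decreasing induction on $i$ from $r$ down to $1$. For $i=r$, since $I^{r+1}=JI^r\subseteq J$ we get $J\cap(x)^r\subseteq I^{r+1}=JI^r\subseteq JI^{r-1}$, and hence $J\cap I^r=JI^{r-1}$. Assuming $J\cap I^{i+1}=JI^i$, the containment $J\cap(x)^i\subseteq J\cap I^{i+1}=JI^i\subseteq JI^{i-1}$ gives $J\cap I^i\subseteq JI^{i-1}$ by the equivalence above, completing the induction and therefore the Cohen-Macaulayness of $\gr_I(R)$.
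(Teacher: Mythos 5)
Your proposal is correct and follows essentially the same route as the paper: reduce to the Valabrega--Valla inclusions $J\cap I^i\subseteq JI^{i-1}$, use $I=J+(x)$ to replace $J\cap I^i$ by $J\cap(x)^i=x^i(J:_RI^i)$, multiply by $I^r$ and invoke the colon characterization $J^{i+1}:_RI^r=I^{i+1}$ of quasi-Gorensteinness, and finish by decreasing induction from $i=r$. The only (harmless) differences are expository: you spell out why $(a)\Rightarrow(b)$ and why $R$ is Gorenstein (via \Cref{prop:TqGorRGor}) before citing \Cref{simpleU}, points the paper leaves implicit.
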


\section*{Results on the \textbf{a}-invariant and the core of an ideal}

Let $R$ be a Noetherian local ring and $I$ an $R$-ideal. 
In this section, we present results on the core of powers of an ideal $I$ and the \textbf{a}-invariant of the extended Rees algebras when the extended Rees algebra $R[It,t^{-1}]$ is quasi-Gorenstein.

\begin{thm}\label{thm:core}
 Let $(R,\m)$ be a Cohen-Macaulay local ring having a canonical module and $\kk = R/ \m$. Let $I$ be an $\m$-primary ideal. Assume that $R[It,t^{-1}]$ is quasi-Gorenstein and either characteristic of $\kk$ is zero or greater than $r(I)$. Let $a := a(R[It,t^{-1}])$ be the \textbf{a}-invariant of $R[It,t^{-1}]$. Then $\core(I^u) = I^{du + a}$ for all $u \in \mathbb{Z}$. 
\begin{proof}
We may assume that the residue field is infinite. Let $J$ be a minimal reduction of $I$ with $r := r_J(I) = r(I)$. Fix a minimal generating set $x_1, \dots, x_d$ of $J$ where $d = \dim R$. Let $J' = J^{[u]} := ( x_1^u, \dots, x_d^u )$ and $I' = I^u$. Then $J'$ is a minimal reduction of $I'$. By \cite[Theorem 4.5]{PU05} $\core (I) = J^{n+1}:I^n$ for $n \ge r(I)$.  We compute the core of $I'$. We use \cite[Lemma 2.2]{PUV07} which shows $J^{n + 1}: I^n = J^{[n]} : I^{dn}$ for $n  \gg 0$ in our setting. 
For $n \gg 0$, one has
\begin{align*}  
\core (I') &= J'^{n+1} : I'^n \\
  &= J'^{[n+1]} : I'^{dn}  \\
  &= (J^{[u]})^{[n+1]} : (I^u)^{dn} \\
  &= J^{[nu + u]} : I^{udn}\\
  &= J^{[nu + u]} : I^{udn - r}I^r \\
  &= J^{[nu + u]} : J^{udn - r}I^r \\
  &= (J^{[nu + u]} : J^{udn - r}) :I^r\\
  &= J^{d(nu+u - 1) + 1 - (udn -r) } :I^r \\
  &= J^{ du -d +1 + r} : I^r \\
  &= I^{du + a},
\end{align*}
where the last equality follows from \Cref{HKU4.1}(b).
\end{proof}
\end{thm}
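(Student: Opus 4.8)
The plan is to realize $\core(I^u)$ as a single colon ideal and then evaluate that colon explicitly by means of the quasi-Gorenstein colon formula of \Cref{HKU4.1}(b). First I would reduce to the case of an infinite residue field by passing to $R(X) = R[X]_{\m R[X]}$: this faithfully flat extension preserves the Cohen-Macaulay and Gorenstein properties, the quasi-Gorensteinness and the $\mathbf a$-invariant of the extended Rees algebra, the reduction number $r(I)$, and commutes with the formation of powers and of the core, so no hypothesis is lost. I also record that $R$ is Gorenstein here, by \Cref{prop:TqGorRGor}, which is precisely the setting in which the core-as-colon results of Polini and Ulrich apply, and that the characteristic hypothesis (zero or greater than $r(I)$) is exactly what those results require.

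Fix $u \ge 1$, write $I' = I^u$, and choose a minimal reduction $J = (x_1,\dots,x_d)$ of $I$ with $r_J(I) = r(I) =: r$, so that $J$ is a parameter ideal generated by a regular sequence. The Frobenius-type power $J' := J^{[u]} = (x_1^u,\dots,x_d^u)$ is then a minimal reduction of $I'$, since $\overline{J^{[u]}} = \overline{J^u} = \overline{I^u}$, the middle equality coming from the monomial integral-closure computation on the regular sequence $x_1,\dots,x_d$. By \cite[Theorem 4.5]{PU05} one has $\core(I') = J'^{\,n+1} : I'^{\,n}$ for $n \ge r(I')$, and by \cite[Lemma 2.2]{PUV07} this colon equals $J'^{[n+1]} : I'^{\,dn}$ for $n \gg 0$. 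Translating back to $J$ and $I$ gives $\core(I') = J^{[u(n+1)]} : I^{udn}$.

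The final computation is a chain of colon manipulations. Using $J I^r = I^{r+1}$, hence $J^k I^r = I^{r+k}$ for $k \ge 0$, I rewrite $I^{udn} = J^{udn - r} I^r$ (valid once $udn \ge r$, i.e. for $n \gg 0$), and then split the colon as $J^{[u(n+1)]} : (J^{udn-r} I^r) = (J^{[u(n+1)]} : J^{udn-r}) : I^r$. The complete-intersection colon formula $J^{[m]} : J^k = J^{\,d(m-1)+1-k}$, applied with $m = u(n+1)$ and $k = udn - r$, collapses the inner colon to $J^{\,du - d + 1 + r}$, the dependence on $n$ disappearing. It then remains to evaluate $J^{\,du-d+1+r} : I^r$, and here the quasi-Gorenstein hypothesis enters through \Cref{HKU4.1}(b): with $i = du - d + 1 + r$ one gets $J^i : I^r = I^{\,i + a - (r - d + 1)} = I^{\,du + a}$, which is the claimed value. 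The cases $u \le 0$ follow from the convention $I^u = R$ together with the observation that $du + a = d(u+1) - r \le 0$ there, so both sides are the unit ideal.

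The main obstacle I anticipate is the complete-intersection colon formula $J^{[m]} : J^k = J^{\,d(m-1)+1-k}$, which is really a Gorenstein-duality statement for the Artinian complete intersection $R/J^{[m]}$: its socle sits in $J$-adic degree $d(m-1)$, and one must check that the powers $\bar J^{\,k}$ behave rigidly under the duality $0 :_{R/J^{[m]}}(-)$, i.e. that the isomorphism $\gr_J(R) \cong (R/J)[T_1,\dots,T_d]$ transports the annihilator computation faithfully to $R/J^{[m]}$. The secondary points to verify are that all the exponents stay in the valid ranges for $n \gg 0$ and $u \ge 1$ (in particular $d(m-1)+1-k = du - d + 1 + r \ge 1$), and that the reduction to infinite residue field genuinely preserves the core; both are standard but must be stated.
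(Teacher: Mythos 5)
Your proposal follows essentially the same route as the paper's own proof: reduce to an infinite residue field, express $\core(I^u)$ as $J'^{\,n+1}:I'^{\,n}$ via \cite[Theorem 4.5]{PU05}, pass to Frobenius-type powers via \cite[Lemma 2.2]{PUV07}, collapse the inner colon with the complete-intersection colon formula, and finish with \Cref{HKU4.1}(b), exactly the paper's chain of equalities. The only divergence is your side remark on $u \le 0$, where the identity $du + a = d(u+1) - r$ presupposes $a = d - r$, which is neither assumed nor generally true under the hypotheses here; but since the paper's proof likewise treats only $u \ge 1$, this does not affect the substance of the argument.
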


\begin{dfn}
Let $S$ be a $\mathbb{Z}$-graded ring. Let $M$ be a graded $S$-module. The initial degree of $M$, denoted by $\indeg_S (M)$, is the $\inf \{ i \in \mathbb{Z} \mid [M]_i \neq 0 \; \}$ if $M \neq 0$, and is $0$ if $M= 0$. 
\end{dfn}

\begin{rmk}
The number $\indeg_S (M)$ can be $- \infty$ in general. 
For a finitely generated $\mathbb{Z}$-graded module $M$ over a Noetherian ring having a unique maximal homogeneous ideal $\MM$, which is maximal, $\indeg_S ( M / \MM M)$ is a well-defined finite number. In this case, $\indeg_S ( M / \MM M)$ is the minimum among the degrees of the elements in a minimal homogeneous generating set of $M$.
\end{rmk}

 \begin{lem}\label{q-shift}
Let $(R,\m)$ be an analytically unramified Cohen-Macaulay local ring having a canonical module with infinite residue field. Let $I$ be an $\m$-primary ideal. Write $T := R[It,t^{-1}]$, and let $\MM_T$ be the maximal homogeneous ideal of $T$. Let $\overline{T}$ be the integral closure of $T$ in $R[t,t^{-1}]$. Let $\mathcal{C}$ denote the conductor ideal, i.e., $\mathcal{C} = T :_T \overline{T}$. 
If $T$ is quasi-Gorenstein, then $a(\overline{T} ) = a(T) - \indeg ( \mathcal{C} / \MM_T \mathcal{C} )$ where $a(-)$ denotes the \textbf{a}-invariant. 
Furthermore, we have $\indeg ( \mathcal{C} / \MM_T \mathcal{C} ) \le 0$ and equality happens if and only if $T$ is integrally closed in $R[t, t^{-1}]$. 

 \begin{proof}
 If $T$ is integrally closed, then there is nothing to prove. We assume that $T$ is not integrally closed. Let $d = \dim R$ and $A := R[Jt, t^{-1}]$ where $J$ is a minimal reduction of $I$. Then $A$ is a Cohen-Macaulay ring with $A_0 = [\overline{T}]_0  = R$, and  since $R$ is analytically unramified, $A \subseteq \overline{T}$ is module-finite extension. 
 Hence the graded canonical modules $\omega_T$ of $T$ and $\omega_{\overline{T}}$ of $\overline{T}$ are $\Hom_A ( T, \omega_A)$ and $\Hom_A ( \overline{T}, \omega_A)$, respectively, where $\omega_A$ is the graded canonical module of $A$. We claim that $\omega_{\overline{T}} \cong \Hom_T (\overline{T}, \omega_T)$. 
Observe that $\Hom_T (\overline{T}, \omega_T) \cong \Hom_T (\overline{T}, \Hom_A ( T, \omega_A ) ) \cong \Hom_A (\overline{T} \otimes_T T , \omega_A ) \cong \Hom_A (\overline{T}, \omega_A)$, where the last module is a graded canonical module of $\overline{T}$, see \Cref{GddHK5.12}. Since $\overline{T}$ has a unique maximal homogeneous ideal which is maximal, it has well-defined \textbf{a}-invariant  $- \indeg (\omega_{\overline{T}} / \MM_{\overline{T}} \omega_{\overline{T}})$.\\
 
Since $T \subset \overline{T}$ is birational, i.e., they have the same total quotient ring, we have $\Hom_T(\overline{T}, T)  \cong  ( T :_T  \overline{T} )$ where the last module is the conductor ideal $\mathcal{C}$. Recall that since $T$ is quasi-Gorenstein, $\omega_T \cong T(a(T))$. Therefore $\omega_{\overline{T}} \cong \mathcal{C}(a(T))$. This implies that $a(\overline{T}) = - \indeg ( \mathcal{C}/ \MM_{\overline{T}} \mathcal{C} (a(T))  )  =  - (\indeg (\mathcal{C}/ \MM_{\overline{T}} \mathcal{C}) - a(T))$. \\

It remains to show that $\indeg (\mathcal{C}/ \MM_{\overline{T}} \mathcal{C}) = \indeg (\mathcal{C}/ \MM_{T} \mathcal{C})$.
Since $R$ is analytically unramified, there exists a positive integer $q$ such that $\overline{I^i} = I^{i-q} \overline{I^q}$ for $i \ge q \ge 0$ \cite[Theorem 1.4]{Re61}. This shows that $t^{-q} \in \mathcal{C}$. 
By choosing $q$ the smallest positive integer with the property, we have $\indeg_T (C / \MM_T C) = -q \le 0 $. 
Since $[T]_i = [\overline{T}]_i$ for all $i \le 0$, in particular, $[\MM_T]_i = [\MM_{\overline{T}}]_i$ for $i < 0$, we obtain $\indeg (\mathcal{C}/ \MM_{\overline{T}} \mathcal{C}) = \indeg (\mathcal{C}/ \MM_{T} \mathcal{C})$.
 \end{proof}
 \end{lem}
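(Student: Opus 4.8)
The plan is to pin down the graded canonical module $\omega_{\overline{T}}$ as an explicit shift of the conductor $\mathcal{C}$ and then read off its $\mathbf{a}$-invariant from its initial degree. First I would dispose of the case $T = \overline{T}$: there $\mathcal{C} = T$, so $\indeg(T/\MM_T T) = 0$ and $q = 0$, which settles both the formula and positivity. For the remaining case I would reduce to an infinite residue field and fix a minimal reduction $J$ of $I$, setting $A := R[Jt, t^{-1}]$. Since $J$ is generated by a regular sequence, $A$ is Cohen-Macaulay of dimension $\dim R + 1$, and because $R$ is analytically unramified, $A \subseteq \overline{T}$ is a module-finite birational extension with $A_0 = [\overline{T}]_0 = R$.

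Next I would compute both canonical modules through $A$. Applying \Cref{GddHK5.12} to the finite graded extensions $A \to T$ and $A \to \overline{T}$, and noting that $\dim A = \dim T = \dim \overline{T}$ forces the relevant $\sExt$ to collapse to $\sHom$, I get $\omega_T \cong \Hom_A(T, \omega_A)$ and $\omega_{\overline{T}} \cong \Hom_A(\overline{T}, \omega_A)$. A hom-tensor adjunction then gives
\[
\omega_{\overline{T}} \cong \Hom_T(\overline{T}, \Hom_A(T, \omega_A)) \cong \Hom_A(\overline{T} \otimes_T T, \omega_A) \cong \Hom_A(\overline{T}, \omega_A),
\]
so that $\omega_{\overline{T}} \cong \Hom_T(\overline{T}, \omega_T)$.

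Now I would invoke the quasi-Gorenstein hypothesis: since $T$ is quasi-Gorenstein, $\omega_T \cong T(a(T))$, hence $\omega_{\overline{T}} \cong \Hom_T(\overline{T}, T)(a(T))$. Because $T \subset \overline{T}$ is birational, $\Hom_T(\overline{T}, T) \cong (T :_T \overline{T}) = \mathcal{C}$, giving the key identification $\omega_{\overline{T}} \cong \mathcal{C}(a(T))$. Reading off the $\mathbf{a}$-invariant via $a(\overline{T}) = -\indeg(\omega_{\overline{T}}/\MM_{\overline{T}}\omega_{\overline{T}})$ together with the shift rule $\indeg(M(a)) = \indeg(M) - a$ then yields $a(\overline{T}) = a(T) - \indeg(\mathcal{C}/\MM_{\overline{T}}\mathcal{C})$.

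The last and most delicate step is to replace the initial degree computed over $\overline{T}$ by the one computed over $T$, i.e. to prove $\indeg(\mathcal{C}/\MM_{\overline{T}}\mathcal{C}) = \indeg(\mathcal{C}/\MM_T\mathcal{C})$, and to extract positivity. Here I would use Ratliff's theorem (available since $R$ is analytically unramified) to produce an integer with $\overline{I^i} = I^{i-q}\overline{I^q}$ for $i \ge q$; taking the least such $q$ shows $t^{-q} \in \mathcal{C}$ and pins down $\indeg_T(\mathcal{C}/\MM_T\mathcal{C}) = -q$. Because $T$ and $\overline{T}$ agree in all non-positive degrees, and their maximal homogeneous ideals agree in all strictly negative degrees, the minimal-degree generators of $\mathcal{C}$ are insensitive to which of the two rings one quotients against, so the two initial degrees coincide; the bound $q \ge 0$, with $q > 0$ precisely when $T \neq \overline{T}$ (as then $\mathcal{C}$ is a proper ideal supported in strictly negative degrees), then follows. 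I expect this low-degree bookkeeping of the graded structure of the conductor to be the main obstacle, since it is where the abstract module isomorphism must be converted into concrete control of the degrees of a minimal generating set.
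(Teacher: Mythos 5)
Your proposal is correct and follows essentially the same route as the paper: canonical modules computed through $A = R[Jt,t^{-1}]$ via \Cref{GddHK5.12}, hom-tensor adjunction to get $\omega_{\overline{T}} \cong \Hom_T(\overline{T},\omega_T) \cong \mathcal{C}(a(T))$, and the degree comparison between $\MM_T$ and $\MM_{\overline{T}}$ settled by $t^{-q} \in \mathcal{C}$ coming from the stability of integral closures of powers. The only cosmetic difference is the attribution of that stability result, which the paper takes from Rees \cite{Re61} rather than Ratliff.
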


 \begin{thm}\label{thm:a-inv} 
Let $(R,\m)$ be a $d$-dimensional analytically unramified Cohen-Macaulay local ring having a canonical module with infinite residue field. Let $I$ be an $\m$-primary ideal. Let $\{\mathcal{F}_i\}_{i \in \mathbb{Z}}$ where $\mathscr{F}_i = \overline{I^i}$ be the integral closure filtration where $\mathcal{F}_i = R$ when $i \le 0$. Assume that $\overline{T} = \oplus_{i \in \mathbb{Z}} \mathcal{F}_i t^i$ is Cohen-Macaulay. If $R[It,t^{-1}]$ is quasi-Gorenstein, then the index of nilpotency does not depend on a minimal reduction of $I$ and the \textbf{a}-invariant of $R[It,t^{-1}]$ is $s(I) - d + 1$.
 \begin{proof} 
Let $T= R[It,t^{-1}]$, $J$ be a minimal reduction of $I$, and $q = a(\overline{T}) - a(T)$ where $a(-)$ denotes the \textbf{a}-invariant. 
First, we claim that $s_J(I) = s_J(\mathcal{F}) - q$ where $s_J(\mathcal{F}) := \min \{ i \mid \mathcal{F}_{i+1} \subseteq J \}$. 
Since $\overline{T}$ is Cohen-Macaulay, $a(\overline{T}) = s_J(\mathcal{F}) - d + 1$, equivalently $a(T) =  s_J(\mathcal{F}) - q - d + 1$.
\Cref{HKU4.4}(a) $s_J(I) - d + 1 \le a(T)$ implies that $s_J(I) + q \le s_J(\mathcal{F})$. Since $t^{-q} \in T :_T \overline{T}$, we have $\overline{I^{n + q}} \subset I^n$. Hence we have ${J : \overline{I^{n + q}}} \supseteq {J : I^n}$, and by definition this implies the other inequality $s_J(I) + q \ge s_J(\mathcal{F})$. This proves the claim. 
Since $a(T) = s_J(I) - d + 1$ is independent of a minimal reduction $J$, we have $s_J(I) = s(I)$. This completes the proof. 
 \end{proof}
 \end{thm}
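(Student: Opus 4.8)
The plan is to pin down the $\mathbf{a}$-invariant $a(T)$ of $T := R[It,t^{-1}]$ by comparing it with the $\mathbf{a}$-invariant of the normalized extended Rees algebra $\overline{T} = \oplus_{i \in \ZZ} \mathcal{F}_i t^i$ via \Cref{q-shift}, and then to convert the resulting equality into a statement about the index of nilpotency through an analysis of the conductor. Since the residue field is infinite I first fix a $d$-generated minimal reduction $J$ of $I$. The key input, which I expect to be the main obstacle, is the formula $a(\overline{T}) = s_J(\mathcal{F}) - d + 1$, where $s_J(\mathcal{F}) = \min\{\, i \mid \mathcal{F}_{i+1} \subseteq J \,\}$. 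I would establish this from the Cohen-Macaulayness of $\overline{T}$, either by invoking the standard description of the $\mathbf{a}$-invariant of a Cohen-Macaulay (normalized) extended Rees algebra, or by adapting the colon-ideal description of the graded canonical module in \Cref{HKU4.1} from ordinary powers to the normal filtration $\mathcal{F}$. The difficulty here is purely that I need this $\mathbf{a}$-invariant formula in the filtered/normal setting rather than for the ideal powers.

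With that formula in hand, \Cref{q-shift} supplies $a(\overline{T}) = a(T) + q$ with $q = -\indeg(\mathcal{C}/\MM_T\mathcal{C}) \ge 0$, the conductor shift, which depends only on $T$ (hence on $R$ and $I$) and not on the choice of $J$. Combining the two displays gives $a(T) = s_J(\mathcal{F}) - q - d + 1$. The heart of the argument is then to prove $s_J(I) = s_J(\mathcal{F}) - q$ by a two-sided inequality. For ``$\le$'' I would use \Cref{HKU4.4}(a), namely $s_J(I) - d + 1 \le a(T)$, which rearranges to $s_J(I) + q \le s_J(\mathcal{F})$. For ``$\ge$'' I would use that $t^{-q} \in \mathcal{C} = T :_T \overline{T}$, which holds by Reid's theorem \cite{Re61} because $\overline{I^{n+q}} = I^{n}\,\overline{I^{\,q}} \subseteq I^n$ for the appropriate $q$; thus $\overline{I^{n+q}} \subseteq I^n$, and taking $n = s_J(I)+1$ shows $\mathcal{F}_{(s_J(I)+q)+1} = \overline{I^{(s_J(I)+q)+1}} \subseteq I^{s_J(I)+1} \subseteq J$, whence $s_J(\mathcal{F}) \le s_J(I)+q$.

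Finally I would conclude by exploiting intrinsicality. Since $a(\overline{T})$ is an invariant of $\overline{T}$ alone, the quantity $s_J(\mathcal{F}) = a(\overline{T}) + d - 1$ is independent of $J$; as $q$ is likewise independent of $J$, the equality $s_J(I) = s_J(\mathcal{F}) - q$ forces the index of nilpotency $s_J(I)$ to be independent of the minimal reduction $J$. In particular $s_J(I) = s(I)$ for every minimal reduction, and substituting back into $a(T) = s_J(I) - d + 1$ yields $a(R[It,t^{-1}]) = s(I) - d + 1$, as claimed.
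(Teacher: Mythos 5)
Your proposal is correct and follows essentially the same route as the paper's proof: the formula $a(\overline{T}) = s_J(\mathcal{F}) - d + 1$ from the Cohen--Macaulayness of $\overline{T}$, the shift $a(\overline{T}) = a(T) + q$ from \Cref{q-shift}, the inequality $s_J(I) + q \le s_J(\mathcal{F})$ via \Cref{HKU4.4}(a), the reverse inequality via the conductor element $t^{-q}$ giving $\overline{I^{n+q}} \subseteq I^n$, and an intrinsicality argument to conclude $s_J(I) = s(I)$. The only cosmetic difference is that you derive the independence of $s_J(I)$ from the $J$-independence of $s_J(\mathcal{F})$ and $q$, whereas the paper reads it off from the $J$-independence of $a(T)$; these are trivially equivalent.
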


The ring $\oplus_{i \in \mathbb{Z}} \mathcal{F}_i t^i$ is not Cohen-Macaulay in general. However, Hochster \cite{Ho72} showed that it is Cohen-Macaulay when $R$ is a polynomial ring (localized at the origin) over a field and $\mathcal{F}_1$ is a monomial ideal.

\section{The Gorensteinness of extended Rees algebras of monomial normal ideals}

Let $I$ be a monomial ideal in a polynomial ring $R = k[x_1,\dots, x_d]$ over a field $k$. The integral closure of the extended Rees algebra $\overline{R[It, t^{-1}]}$ is Cohen-Macaulay by a result of Hochster \cite{Ho72}.  
In \cite[Theorem 5.6]{HKU11} the authors characterized the Gorensteinness of $\overline{R[It, t^{-1}]}$ when $I$ has a minimal reduction $J$ which is generated by powers of variables, i.e.,  $J = (x_1^{a_1},\dots, x_d^{a_d})$ for some $a_i \in \mathbb{N}$. This condition having such a minimal reduction is equivalent to the condition that $I$ has only one Rees valuation. In this section we generalize this result by removing the condition on the number of Rees valuations. We are able to interpret the reduction number that appears in \cite[Proposition 5.4]{HKU11} in terms of the \textbf{a}-invariant of $\overline{R[It,t^{-1}]}$, and this leads to a lower bound on the reduction number. We follow the notation of Chapter $6$ of \cite{BH}. 

\begin{set}\label{set:nor} 
 Let $R = k[x_1,\dots, x_d]$ be a polynomial ring in $d$-variables over a field $k$ and $\m = (x_1,\dots,x_d)R$. 
 We assign a $\mathbb{Z}^{d+1}$-grading to the Laurent polynomial ring $R[t, t^{-1}]$ by setting the \textit{exponent function}, denoted by $\exp$, from the set of monomial ideals of $R$ to $\mathbb{Z}^d$ as $\exp (x_1^{a_1}\cdots x_d^{a_d} t^{a_{d+1}}) = (a_1,\dots,a_d, a_{d+1})$. This determines the grading, since $\{ \exp(x_1), \dots \exp (x_d), \exp(t) \}$ forms an (orthonormal) $\mathbb{Z}$-basis for $\mathbb{Z}^{d+1}$. Let $A$ be a $\mathbb{Z}^{d+1}$-graded subring of $R[t,t^{-1}]$. 
 Then the semigroup 
 \[
{{C}}_A :=  \{ \exp (m) \mid m~\text{monomial of}~R[t,t^{-1}]~\text{in}~A \} \subseteq \mathbb{Z}^{d+1}
 \]
is called the $\textit{the affine semigroup of $A$}$ if ${{C}}_A$ is a finitely generated semigroup. For an affine semigroup ${{C}}$, let $\relint ({{C}})$ denote \textit{the relative interior} of ${{C}}$ that is $\displaystyle \relint ({{C}}) := {{C}} \cap \relint \mathbb{R}_{\ge 0} {{C}}$.  
\end{set}

\begin{lem}[{\cite[Proposition6.1.5]{BH}}]\label{Embedding} With \Cref{set:nor}, let ${{C}}$ be the affine semigroup of $R[\m t, t^{-1}]$. Let $W =  \mathbb{Z}^{d+1}$ and $\{ \ee_i \}$ be the standard basis of $W$. Let $\phi \in \operatorname{Aut}_{\mathbb{Z}} (W)$ be an automorphism of $W$ defined as follows; $\phi(\ee_i) = \ee_i + \ee_{d+1}$ for $i = 0, \dots, d$ and $\phi(\ee_{d+1}) = - \ee_{d+1}$. Then $\phi |_{{C}}$ is an embedding of ${{C}}$ into $\mathbb{Z}_{\ge 0}^{d+1}$.
\begin{proof}
One can easily check that $\phi$ is an automorphism on $W$. 
We show that $\phi |_{{C}}$ is an embedding. Since ${{C}}$ does not have any inverse (in the sense of affine semigroups), $\ker (\phi |_{{C}}) = 0$. It remains to show that $\phi ({{C}}) \subseteq \mathbb{Z}^{d+1}$. Since $\phi(\exp (x_i)) = (0,\dots,0,1,0,\dots,0,1)$ and $\phi(\exp (t^{-1}) ) = (0,\dots,0,1)$, it suffices to show that for $a_i \in \mathbb{Z}_{\ge 0}$ and $b \in \mathbb{Z}$ such that  $\sum_{i = 0}^d a_i \ge b$, the image  
$
\phi(\exp (x_1^{a_1} \cdots x_d^{a_d} t^b) ) 
$
is in $\mathbb{Z}_{\ge 0 }^{d+1}$.
Indeed we have 
\begin{align*}
\phi(\exp (x_1^{a_1} \cdots x_d^{a_d} t^b)) 
	&= \left( \sum_{i = 0}^d a_i \phi(\exp (x_i) ) \right) + b \phi(\exp (t) )\\ 
	&= \left( \sum_{i = 0}^d a_i \phi(\exp (x_i) ) \right) - b \phi(\exp (t^{-1}))\\
	&= (a_1,\dots,a_d, \sum_{i = 0}^d a_i) - b(0,\dots,0,1)\\ 
	&= (a_1,\dots,a_d, \sum_{i = 0}^d a_i - b) \in \mathbb{Z}_{\ge 0 }^{d+1}. \qedhere
\end{align*}
\end{proof}
\end{lem}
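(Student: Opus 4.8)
The plan is to reduce the assertion to two independent checks: (i) that $\phi$ is a genuine automorphism of the free abelian group $W = \ZZ^{d+1}$, and (ii) that $\phi$ carries every element of $\mathcal{C}$ into the nonnegative orthant $\ZZ_{\ge 0}^{d+1}$. The point is that once (i) is established, $\phi$ is injective on all of $W$ and hence \emph{a fortiori} on the subset $\mathcal{C}$, so the injectivity required of an embedding of affine semigroups (the vanishing of $\ker(\phi|_{\mathcal{C}})$) is automatic. Since $\phi$ is moreover $\ZZ$-linear, its restriction is a semigroup homomorphism, not merely a map of sets; combining this with (ii) yields precisely an embedding of $\mathcal{C}$ into $\ZZ_{\ge 0}^{d+1}$.

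For step (i), I would exhibit $\phi$ as an involution. Checking on basis vectors, for $1 \le i \le d$ one has $\phi(\phi(\ee_i)) = \phi(\ee_i + \ee_{d+1}) = (\ee_i + \ee_{d+1}) - \ee_{d+1} = \ee_i$, and $\phi(\phi(\ee_{d+1})) = \phi(-\ee_{d+1}) = \ee_{d+1}$, so $\phi^2 = \mathrm{id}_W$ and $\phi$ is its own inverse, hence an automorphism. (Equivalently, the matrix of $\phi$ in the standard basis is lower triangular with diagonal entries $1,\dots,1,-1$, so $\det \phi = -1$.)

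The substance lies in step (ii), and the only care needed is to pin down the defining inequalities of $\mathcal{C}$. By definition $\mathcal{C}$ is the set of degrees of the monomials of $R[\m t, t^{-1}] = \oplus_{b \in \ZZ} \m^{b} t^{b}$, where $\m^{b} = R$ for $b \le 0$. A monomial $x_1^{a_1}\cdots x_d^{a_d} t^{b}$ (with all $a_i \ge 0$) lies in this ring exactly when $x_1^{a_1}\cdots x_d^{a_d} \in \m^{b}$; for $b > 0$ this reads $a_1 + \cdots + a_d \ge b$, and for $b \le 0$ it is vacuous. Hence the two cases are captured by a single inequality:
\[
\mathcal{C} = \Big\{ (a_1,\dots,a_d,b) \in \ZZ^{d+1} : a_i \ge 0 \ \text{for all } i, \ \sum_{i=1}^{d} a_i \ge b \Big\}.
\]
Applying $\phi$ and using its linearity gives
\[
\phi(a_1,\dots,a_d,b) = \sum_{i=1}^{d} a_i(\ee_i + \ee_{d+1}) - b\,\ee_{d+1} = \Big(a_1,\dots,a_d,\ \sum_{i=1}^{d} a_i - b\Big),
\]
whose first $d$ coordinates are nonnegative by hypothesis and whose last coordinate is nonnegative precisely by the defining inequality $\sum_{i} a_i \ge b$. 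This establishes (ii).

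I expect the main obstacle to be purely a matter of bookkeeping rather than of depth: one must read off correctly that the degree $b \le 0$ strands of the extended Rees algebra contribute the full set of lattice monomials in the first $d$ coordinates with no constraint in the last, so that $\mathcal{C}$ really is the single-inequality affine semigroup displayed above, and one must confirm that $\phi|_{\mathcal{C}}$ respects addition (immediate from $\ZZ$-linearity). No input beyond \Cref{set:nor} and the elementary computation above is required.
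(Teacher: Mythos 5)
Your proposal is correct and takes essentially the same approach as the paper: verify that $\phi$ is an automorphism of $W$, observe that the restriction $\phi|_{\mathcal{C}}$ is injective, and establish $\phi(\mathcal{C}) \subseteq \mathbb{Z}_{\ge 0}^{d+1}$ by exactly the coordinate computation $\phi(a_1,\dots,a_d,b) = (a_1,\dots,a_d,\sum_i a_i - b)$ together with the defining inequality $\sum_i a_i \ge b$ of the semigroup. Your two refinements --- exhibiting $\phi$ as an involution rather than leaving the automorphism check implicit, and deducing injectivity directly from the injectivity of $\phi$ on all of $W$ rather than from the paper's ``no inverses, so trivial kernel'' argument --- are slight streamlinings of the same proof, not a different route.
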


\begin{cor} With \Cref{set:nor}, let $\mathscr{F} = \{ \mathscr{F}_i \}_{i \in \mathbb{Z}}$ be a filtration where $\mathscr{F}_i = R$ when $i \le 0$ and $\mathscr{F}_i$  are monomial ideals contained in $\m^i$. Then the affine semigroup of $\oplus_{i \in \mathbb{Z}} \mathscr{F}_i t^i$ can be embedded into $\mathbb{Z}_{\ge 0}^{d+1}$. In particular, the  affine semigroup of $\overline{R[It,t^{-1}]}$ can be embedded into $\mathbb{Z}_{\ge 0}^{d+1}$ when $I$ is a monomial ideal.
\end{cor}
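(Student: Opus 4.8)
The plan is to realize the semigroup in question as a sub-semigroup of the one treated in \Cref{Embedding} and then simply restrict the embedding $\phi$ already constructed there. First I would record the elementary observation that the hypotheses force $\oplus_{i \in \mathbb{Z}} \mathscr{F}_i t^i$ to sit inside $R[\m t, t^{-1}]$ as a graded subring. Indeed, every homogeneous element of $\oplus_{i} \mathscr{F}_i t^i$ has the form $m t^i$ with $m \in \mathscr{F}_i$; since $\mathscr{F}_i \subseteq \m^i$ (with $\mathscr{F}_i = R = \m^i$ understood for $i \le 0$), the monomial $m$ lies in $\m^i$, so $m t^i$ is a homogeneous element of $R[\m t, t^{-1}] = \oplus_i \m^i t^i$. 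Consequently the set of degrees of homogeneous elements of $\oplus_i \mathscr{F}_i t^i$ is contained in $\mathcal{C}$, the affine semigroup of $R[\m t, t^{-1}]$; and because $\mathscr{F}$ is a filtration, so that $\oplus_i \mathscr{F}_i t^i$ is genuinely a ring, this degree set is itself an affine sub-semigroup of $\mathcal{C}$.

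With this inclusion in hand, the embedding is immediate: \Cref{Embedding} provides an automorphism $\phi$ of $W = \mathbb{Z}^{d+1}$ whose restriction $\phi|_{\mathcal{C}}$ is injective and carries $\mathcal{C}$ into $\mathbb{Z}_{\ge 0}^{d+1}$. Restricting $\phi$ further to the sub-semigroup $\mathcal{C}_{\oplus_i \mathscr{F}_i t^i} \subseteq \mathcal{C}$ then yields an embedding of the affine semigroup of $\oplus_i \mathscr{F}_i t^i$ into $\mathbb{Z}_{\ge 0}^{d+1}$, which is exactly the first assertion. No new computation is needed here, since both the injectivity and the containment in the positive orthant are inherited from $\mathcal{C}$.

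Finally, for the ``in particular'' clause I would apply the first part to the integral closure filtration $\mathscr{F}_i = \overline{I^i}$. This requires only three standard structural facts, which I would state (and cite as needed): that $\overline{R[It,t^{-1}]} = \oplus_i \overline{I^i} t^i$, that each $\overline{I^i}$ is again a monomial ideal when $I$ is monomial, and that $\overline{I^i} \subseteq \m^i$ --- the last because $I^i \subseteq \m^i$ and $\m^i$, being a power of the maximal ideal of the polynomial ring, is integrally closed, so $\overline{I^i} \subseteq \overline{\m^i} = \m^i$. Together with $\overline{I^i} = R$ for $i \le 0$, these place $\{\overline{I^i}\}$ under the hypotheses of the first part, and the conclusion follows. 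The only point demanding any care is verifying these integral-closure facts cleanly; the semigroup argument itself is a formal restriction and presents no real obstacle.
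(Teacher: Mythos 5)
Your proof is correct and is essentially the argument the paper intends: the corollary is stated without proof precisely because, once one notes $\mathscr{F}_i \subseteq \m^i$ places $\oplus_i \mathscr{F}_i t^i$ inside $R[\m t, t^{-1}]$, the embedding is just the restriction of $\phi$ from \Cref{Embedding}. Your verification of the integral-closure facts ($\overline{R[It,t^{-1}]} = \oplus_i \overline{I^i}t^i$, $\overline{I^i}$ monomial, and $\overline{I^i} \subseteq \overline{\m^i} = \m^i$) correctly fills in the ``in particular'' clause exactly as the paper takes for granted.
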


For a monomial ideal $I$ in $R$, its integral closure $\overline{I}$ can be determined by the Newton polyhedron of the ideal $I$. Here the \textit{Newton polyhedron} $\NP(I)$ of a monomial ideal $I$ is the cone generated (over $\mathbb{R}_{\ge 0}$) by the exponent vectors of the monomials in $I$ in $\mathbb{R}^d$. 
We would like to describe the ring $\overline{R[It,t^{-1}]}$ using the half spaces  in $\mathbb{R}^{d+1}$ that corresponds to the ones that determine $\overline{I}$ in $\mathbb{R}^d$. Let $\langle \quad, \quad \rangle$ denote the inner product in $\RR^n$.

\begin{lem}\label{Halfspaces} 
With \Cref{set:nor}, let $I$ be a monomial ideal in $R$. Let $H_i^+ = \{ v \in \mathbb{R}^d \mid \langle (a_{i1},\dots,a_{id}),v \rangle \ge h_i \}$ where $h_i \in \mathbb{Z}_{\ge 0}$ be the half spaces in $\mathbb{Z}^d$ that determine the Newton polyhedron of $I$. 
Define the half spaces $\widetilde{H}_i^+$ that correspond to each $H_i^+$ in $\mathbb{R}^{d+1}$ as $\widetilde{H}_i^+ := \{ v \in \mathbb{R}^{d+1} \mid \langle (a_{i1} - h_i, \dots, a_{id} - h_i, h_i), v \rangle \ge 0 \}$. 
Let ${{C}}$ be the affine semigroup of $\overline{R[It, t^{-1}]}$. 
Then the intersection $\cap \; \widetilde{H}_i^+$ is the cone generated by the affine semigroup $\phi({{C}})$ where $\phi$ is the embedding in \Cref{Embedding}.
\begin{proof}
Let $H := H_i^+$ for some $i$ and write $H = \{ v \in \mathbb{R}^d \mid \langle (a_1,\dots,a_d),v \rangle \ge h \}$. 
An exponent vector $(z_1, \dots, z_{d+1})$ in $\phi({{C}})$ is the image of $(z_1, \dots, z_d, -z_{d+1} + (z_1+ \cdots + z_d) )$ under the map $\phi$.  One has $(z_1, \dots, z_d, -z_{d+1} + (z_1+ \cdots + z_d)) \in {{C}}$ if and only if the monomial $x^{z_1} \cdots x^{z_d}$ is in $\overline{I^{-z_{d+1} + (z_1+ \cdots + z_d)}}$. 
In terms of half spaces this corresponds to the condition $\langle (a_1,\dots, a_d) , (z_1,\dots, z_d) \rangle \ge (- z_{d+1} + (z_1+ \cdots + z_d)) h$ for all the half space $H_i$ defining $\NP(I)$.
By setting $\widetilde{H} = \{ v \in \mathbb{R}^{d+1} \mid \langle (a_1- h, \dots, a_d - h, +h),v \rangle \ge 0 \}$, we obtain $\cap \; \widetilde{H}_i^+ = \mathbb{R}_{ \ge 0} \phi({{C}})$.
\end{proof}
\end{lem}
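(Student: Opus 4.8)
The plan is to translate everything into the combinatorics of the Newton polyhedron of $I$ and to exhibit the shearing automorphism $\phi$ of \Cref{Embedding} as the device that collapses the whole family of rescaled polyhedra $\{\,b\cdot\NP(I)\,\}_{b\in\mathbb{Z}}$ onto a single fixed rational cone. I would rely on the standard description of the integral closure of a monomial ideal: writing $\NP(I)$ for the Newton polyhedron of $I$, one has $\NP(I)=\bigcap_i H_i^+$, where every normal $(a_{i,1},\dots,a_{i,d})$ has nonnegative entries (the recession cone of $\NP(I)$ is the nonnegative orthant, and the coordinate half-spaces $v_j\ge0$ occur among the $H_i^+$ with $h_i=0$), and a monomial $x^a$ lies in $\overline{I^{\,b}}$ precisely when $a\in\NP(I^{\,b})=b\cdot\NP(I)$, i.e.\ when $\langle(a_{i,1},\dots,a_{i,d}),a\rangle\ge b\,h_i$ for all $i$. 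A pleasant feature is that this one system of inequalities reproduces the convention $\overline{I^{\,b}}=R$ for $b\le0$ automatically: when $b\le0$ and $h_i\ge0$ the right-hand sides are $\le0$, so they are implied by $a\ge0$, which is itself enforced by the coordinate half-spaces.

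I would then conclude by a single bijective computation. Since $\phi$ is an automorphism of $\mathbb{Z}^{d+1}$ with $\phi(z_1,\dots,z_d,b)=(z_1,\dots,z_d,\ z_1+\cdots+z_d-b)$, any $w=(w_1,\dots,w_{d+1})\in\mathbb{Z}^{d+1}$ equals $\phi(z)$ for the unique $z=(w_1,\dots,w_d,\ b)$ with $b=w_1+\cdots+w_d-w_{d+1}$, and $w\in\phi(\mathcal{C})$ iff $z\in\mathcal{C}$, i.e.\ iff $w_1,\dots,w_d\ge0$ and $\langle(a_{i,1},\dots,a_{i,d}),(w_1,\dots,w_d)\rangle\ge b\,h_i$ for all $i$. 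The heart of the matter is the identity
\begin{align*}
\big\langle (a_{i,1}-h_i,\dots,a_{i,d}-h_i,\,h_i),\ w\big\rangle
&=\sum_{j=1}^d a_{i,j}w_j-h_i\Big(\sum_{j=1}^d w_j-w_{d+1}\Big)\\
&=\langle(a_{i,1},\dots,a_{i,d}),(w_1,\dots,w_d)\rangle-b\,h_i,
\end{align*}
valid because $\sum_{j=1}^d w_j-w_{d+1}=b$. Hence $w\in\widetilde H_i^+$ exactly when $\langle(a_{i,1},\dots,a_{i,d}),(w_1,\dots,w_d)\rangle\ge b\,h_i$; in particular the $\widetilde H_i^+$ coming from the coordinate half-spaces ($h_i=0$) are precisely $w_j\ge0$. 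Intersecting over all $i$ matches the membership criterion for $\mathcal{C}$ term by term, so $w\in\bigcap_i\widetilde H_i^+$ iff $w\in\phi(\mathcal{C})$.

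The step I expect to demand the most care is the bookkeeping around the scaling parameter $b$. The inequalities cutting out $\mathcal{C}$ are those of the rescaled polyhedron $b\cdot\NP(I)$ and so genuinely move with $b$, whereas the $\widetilde H_i^+$ are fixed half-spaces through the origin; the displayed identity is exactly what absorbs the $b$-dependence into the extra coordinate $w_{d+1}$, and the main point to get right is that one \emph{fixed} cone $\bigcap_i\widetilde H_i^+$ simultaneously records all slices $b\cdot\NP(I)$ while staying compatible with the two conventions in force — that $\overline{I^{\,b}}=R$ for $b\le0$, and that the recession-cone (coordinate) half-spaces of $\NP(I)$ must be counted among the $H_i^+$ so that the nonnegativity of the first $d$ coordinates is reproduced. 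The nonnegativity of the last coordinate, guaranteed abstractly by \Cref{Embedding}, then also follows, since $I\subseteq\m$ forces $b\cdot\NP(I)\subseteq b\cdot\NP(\m)$ and hence $\sum_j w_j\ge b$.
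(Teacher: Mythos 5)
Your proof is correct and takes essentially the same route as the paper's: parametrize $\phi(\mathcal{C})$ by $w=\phi(z)$ with $b=w_1+\cdots+w_d-w_{d+1}$, invoke the characterization $x^{w'}\in\overline{I^{\,b}}$ iff $w'\in b\cdot\NP(I)$, and use the inner-product identity to convert the $b$-scaled inequalities into the fixed half-spaces $\widetilde{H}_i^+$. In fact your write-up is more careful than the paper's on two points it leaves implicit: the convention that the coordinate half-spaces $v_j\ge 0$ (with $h_i=0$) are counted among the $H_i^+$, without which the stated equality can fail for non-$\m$-primary monomial ideals, and the check that the convention $\overline{I^{\,b}}=R$ for $b\le 0$ is automatically absorbed by the system of inequalities.
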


\begin{exam}
Let $R = \mathbb{C}[x_1, \dots, x_d]$ and $I = (x_1,\dots,x_d)$. Let $\{ \ee_i \}_{i = 1}^{d+1}$ be the standard base of $\mathbb{Z}^{d+1}$. Then one can easily see that $\phi({{C}}) = \{ (z_1,\dots,z_{d+1}) \in \mathbb{Z}^{d+1} \mid  z_i \ge 0 \hbox{ for all } i \} = \cap \{  (z_1,\dots,z_{d+1}) \in \mathbb{Z}^{d+1} \mid \langle \ee_i , v \rangle \ge 0 \}$ where $\phi$ as in \Cref{Embedding}. 
\end{exam}

\begin{lem}\label{PartCan}
With \Cref{set:nor} assume that $I$ is an $\m$-primary monomial ideal. Let ${{C}}$ be the affine semigroup of $\overline{R[It,t^{-1}]}$ and $\phi$ the embedding in \Cref{Embedding}. Then there exists an exponent vector of the form $(1,\dots,1,q)$ in $\phi({{C}})$ for some integer $q$, with $1 \le q \le d + 1$, which is part of a minimal generating set for the canonical ideal for $\overline{R[It,t^{-1}]}$.
\begin{proof}
Since $I$ is $\m$-primary, the half spaces of the form $\{ v \in \mathbb{R}^d \mid (0,\dots,1,\dots,0) \cdot v \ge 0 \}$, where the $i$th entry is $1$, are part of the boundary of the Newton polyhedron of $I$. By \Cref{Halfspaces} these will be part of the boundary half spaces in $\phi( {{C}})$. For instance, the half space $\{ v \in \mathbb{R}^d \mid (1,\dots,0) \cdot v \ge 0 \}$ corresponds to the half space $\{ v \in \mathbb{R}^{d+1} \mid (1,\dots,0) \cdot v \ge 0 \}$. Hence if $(z_1,\dots,z_d,z_{d+1}) \in \relint ({{C}})$, then $z_i \ge 1$ for $i = 0,\dots,d$. \\

By \cite[Theorem 6.3.5.(b)]{BH} it suffices to show that there exists an integer $q$ in $1 \le q \le d+1$ such that if $(1, \dots, 1, q)$ is in $\relint \phi({{C}})$, then $(1,\dots, 1, q - 1)$ is not in $\phi({{C}})$. Since $x_1 \cdots x_d \in R \subseteq \overline{R[It,t^{-1}]}$, $\phi(1,\dots,1,0) = (1, \dots,1, d)$ is in $\phi({{C}})$. 
If $(1, \dots, 1, d)$ is on the boundary, then $(1, \dots, 1, d+1)$ in $\relint \phi({{C}})$. In this case we set $q = d+1$. If $(1, \dots, 1, d)$ is not on the boundary, we can choose $q \le d$ be the minimal in the last component since we have $\phi(t^{-1}) = (0, \dots,0,1)$ in $\phi({{C}})$. 
\end{proof}
\end{lem}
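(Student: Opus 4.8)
The plan is to view $A:=\overline{R[It,t^{-1}]}$ as the normal affine semigroup ring $k[\phi(\mathcal{C})]$ through the embedding of \Cref{Embedding}, and to extract its canonical ideal combinatorially. Since $A$ is normal and Cohen--Macaulay (\cite{Ho72}), \cite[Theorem~6.3.5(b)]{BH} identifies $\omega_A$ with the monomial ideal spanned by the lattice points of $\relint(\phi(\mathcal{C}))$. Thus a lattice point $v\in\relint(\phi(\mathcal{C}))$ contributes a \emph{minimal} generator of $\omega_A$ exactly when $v-c\notin\relint(\phi(\mathcal{C}))$ for every nonzero $c\in\phi(\mathcal{C})$, and the whole problem reduces to producing an interior point of the prescribed shape $(1,\dots,1,q)$ and verifying this minimality condition for it.

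First I would determine the shape of the interior points. Because $I$ is $\m$-primary, the coordinate half-spaces $\{z_i\ge 0\}$, $i=1,\dots,d$, occur among the bounding half-spaces of the Newton polyhedron of $I$, so by \Cref{Halfspaces} they are facets of $\phi(\mathcal{C})$; therefore every $v\in\relint(\phi(\mathcal{C}))$ has $z_i\ge 1$ for $i\le d$, and the only candidate ``corner'' generators are the vectors $(1,\dots,1,q)$. To produce one, I would use that $x_1\cdots x_d\in R\subseteq A$ places $(1,\dots,1,d)=\phi(\deg(x_1\cdots x_d))$ in $\phi(\mathcal{C})$. Evaluating a slanted facet inequality, with Newton normal $(a_{j,1},\dots,a_{j,d})$ and level $h_j\ge 0$, at $(1,\dots,1,s)$ gives the value $\sum_k a_{j,k}+h_j(s-d)$, which is strictly positive for $s\ge d$ since $\sum_k a_{j,k}>0$ for a genuine facet; combined with the strict inequalities $z_i=1>0$ ($i\le d$) and $z_{d+1}=d>0$, this shows $(1,\dots,1,d)\in\relint(\phi(\mathcal{C}))$. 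Taking $q$ minimal with $(1,\dots,1,q)\in\relint(\phi(\mathcal{C}))$ then yields the asserted range $1\le q\le d+1$: the upper bound holds since $(1,\dots,1,d)$ is interior, and $q\ge 1$ holds because $\phi(\mathcal{C})\subseteq\mathbb{Z}_{\ge 0}^{d+1}$ (\Cref{Embedding}) forces relative-interior points off the coordinate hyperplane $\{z_{d+1}=0\}$.

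It remains to check that $(1,\dots,1,q)$ is a minimal generator, and this is the step I expect to carry the argument. Suppose $c\in\phi(\mathcal{C})$ is nonzero with $(1,\dots,1,q)-c\in\relint(\phi(\mathcal{C}))$. If $c$ had a positive entry among its first $d$ coordinates, the difference would have a coordinate $\le 0$, contradicting $z_i\ge 1$ on the interior; hence $c$ has vanishing first $d$ coordinates. A direct inspection of the semigroup shows that such elements are precisely the nonnegative multiples of $\ee_{d+1}=\phi(\deg(t^{-1}))$, so $c=m\,\ee_{d+1}$ with $m\ge 1$ and the difference equals $(1,\dots,1,q-m)$. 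Since $\relint(\phi(\mathcal{C}))+\phi(\mathcal{C})\subseteq\relint(\phi(\mathcal{C}))$, membership of $(1,\dots,1,s)$ in the relative interior is monotone increasing in $s$, so by minimality of $q$ none of the points $(1,\dots,1,q-m)$, $m\ge 1$, lies in $\relint(\phi(\mathcal{C}))$. Thus no such $c$ exists and $(1,\dots,1,q)$ is a minimal generator.

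The main obstacle is the bookkeeping in the previous paragraph: one must be sure that the only semigroup translations keeping a corner point inside the relative interior are by $\ee_{d+1}$, and that a single downward step already decides minimality via the monotonicity $\relint(\phi(\mathcal{C}))+\phi(\mathcal{C})\subseteq\relint(\phi(\mathcal{C}))$. Once the $\m$-primary hypothesis is converted, through \Cref{Halfspaces}, into the facet inequalities $z_i\ge 0$ and the slanted inequalities above, the remaining computations are routine evaluations of inner products, and the canonical-ideal description of \cite[Theorem~6.3.5(b)]{BH} supplies the rest.
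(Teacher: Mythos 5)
Your proposal is correct and takes essentially the same route as the paper: the half-space description of $\phi(\mathcal{C})$ from \Cref{Halfspaces}, the point $(1,\dots,1,d)=\phi(\deg(x_1\cdots x_d))$, a minimal choice of $q$, and the identification of the canonical ideal with the span of the lattice points of $\relint(\phi(\mathcal{C}))$ via \cite[Theorem 6.3.5(b)]{BH}. If anything, your execution is tighter than the paper's at its two delicate points: you show directly that $(1,\dots,1,d)$ lies in $\relint(\phi(\mathcal{C}))$ (so the paper's case ``$(1,\dots,1,d)$ on the boundary,'' leading to $q=d+1$, never occurs), and you verify minimality against the correct criterion ($(1,\dots,1,q)-c\notin\relint(\phi(\mathcal{C}))$ for every nonzero $c\in\phi(\mathcal{C})$) rather than the paper's stronger intermediate claim that $(1,\dots,1,q-1)\notin\phi(\mathcal{C})$, which the minimal choice of $q$ only guarantees with $\relint(\phi(\mathcal{C}))$ in place of $\phi(\mathcal{C})$ (and which can genuinely fail, e.g.\ for $I=\m$, where $(1,\dots,1,0)\in\phi(\mathcal{C})$).
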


\begin{thm}\label{normal:main} Let $R = k[x_1,\dots, x_d]$ be a polynomial ring in $d$-variables over a field $k$ and $\m = (x_1,\dots,x_d)R$. Let $I$ be an $\m$-primary monomial ideal and $H_i$ the half spaces that determine the Newton Polyhedron of $I$ where $H_i = \{ v \in \mathbb{R}^d \mid \langle (a_{i1},\dots,a_{id}),v \rangle \ge h_i \}$ for $h _i\in \mathbb{Z}_{\ge 0}$. Let $q$ as in \Cref{PartCan} and $w_i := \langle (a_{i1} - h_i, \dots, a_{id} - h_i, h_i) , (1, \dots, 1, q) \rangle$. Define $N^+_i := \{ v \in \mathbb{R}^{d+1} \mid \langle (a_{i1} - h_i, \dots, a_{id} - h_i, h_i), v \rangle \ge w_i \}$. Let ${{C}}$ be the affine semigroup of $\overline{R[It,t^{-1}]}$ and $\phi$ the embedding in \Cref{Embedding}.
Then $\overline{R[It, t^{-1}]}$ is Gorenstein if and only if  the relative interior of $\phi({{C}})$ is contained in $\cap \; N^+_i,$ equivalently $\displaystyle \relint (\phi({{C}})) = (\cap \; N^+_i) \cap \phi(C)$.
\begin{proof}
For $v \in N^+_i$ and for any $i$, since 
\[
\langle (a_{i1} - h_i, \dots, a_{id} - h_i, h_i), v  \rangle \ge w_i =  \langle (a_{i1} - h_i, \dots, a_{id} - h_i, h_i),  (1,\dots,1,q) \rangle,
\] 
we have 
\[
\langle (a_{i1} - h_i, \dots, a_{id} - h_i, h_i), v - (1,\dots,1,q) \rangle \ge 0.
\] 
Hence $v -  (1,\dots,1,q)$ is in $\mathbb{R}_{\ge 0} \phi({{C}})$ by \Cref{Halfspaces}. In other words, $\cap \; N^+_i = (1,\dots,1,q) + \mathbb{R}_{\ge 0} \phi({{C}})$. Since $(1,\dots,1,q) \in \relint ({{C}})$, we have $(\cap \; N^+_i) \cap \phi(C) \subseteq \relint ({{C}})$. \\

By \cite[Theorem 6.3.5.(b)]{BH}, $\overline{R[It, t^{-1}]}$ is Gorenstein if and only if the relative interior of $\phi(C)$ is principal. The above paragraph shows that $(\cap \; N^+_i) \cap \phi(C)$ is principally generated by $(1,\dots,1,q)$ and contained in the relative interior of $\phi(C)$, and by \Cref{PartCan}, $(1,\dots,1,q)$ is a part of minimal generating set for $\relint \phi(C)$. Hence the relative interior of $\phi(C)$ is principally generated by $(1,\dots, 1, q)$ if and only if $(\cap \; N^+_i) \cap \phi(C) = \relint \phi(C)$. This proves the statement.
\end{proof}
\end{thm}

The following example illustrates the above theorem when there is exactly one bounded half space among the half spaces defining the Newton polyhedron of the ideal $I$. This will help one to understand and prove \Cref{HKU5.4} which is Theorem 5.6 in \cite{HKU11}.

\begin{exam}\label{normal:exam}
Let $R = \mathbb{C}[x,y,z]$ and $I = (x^2, y^2, z^4)$. Then the integral closure of $I$ is determined by the half spaces $\{ v \in \mathbb{R}^3 \mid \langle (2,2,1), v \rangle \ge 4\}$ and $\{ v \in \mathbb{R}^3 \mid \langle \ee_i, v \rangle \ge 0 \}$ for $i = 1,2,3$ where  $\{ \ee_i \}$ denote the standard bases of $\mathbb{R}^3$. Let $\nu$ be the valuation corresponding to the bounded half space $\{ v \in \mathbb{R}^3 \mid \langle (2,2,1), v \rangle \ge 4\}$; then one has $\nu(x) = 2, \nu(y) = 2$, and $\nu(z) = 1$. Let $\overline{R[It,t^{-1}]}$ and ${{C}}$ be the corresponding affine semigroup. Then $\phi({{C}})$ is determined by the following half spaces represented as a matrix
\begin{equation*}
M = \left[
\begin{array}{ c c c c c }
-2 & -2 & -3 & 4 \\ \hdashline
1  & 0 & 0 & 0\\ \hdashline
0 & 1 & 0 & 0\\ \hdashline
0 & 0 & 1 & 0 
\end{array}
\right]
\end{equation*}
in the sense that an exponent vector $(z_1,\dots,z_4) \in \phi({{C}})$ if and only if all the entries of $M (z_1,\dots,z_4)^{tr} \ge 0$. Here $M \ge c$ where $c \in \mathbb{R}$ if all the entries of $M$ is greater than or equal to the number $c$. 
One can easily check that $(1,1,1,2) \in \phi({{C}})$, but $(1,1,1,1) \notin \phi({{C}})$. Hence $q = 2$. Furthermore, $
M \cdot (1,1,1,2)^{tr} = (1,1,1,1)^{tr}.$\\

For simplicity, we view elements of vector spaces as column vectors. If $w \in \relint \phi({{C}})$, then $M \cdot w \gneq 0$, i.e., $M \cdot w \ge 1$. 
Since $M \cdot w \ge 1$ and $M \cdot (1,1,1,2)^{tr}  = (1,1,1,1)^{tr}$, we have  $M \cdot w - M \cdot (1,1,1,2)^T \ge 0$, i.e., $M \cdot (w - (1,1,1,2)^{tr}) \ge 0$. Hence $w - (1,1,1,2)^{tr} \in \phi({{C}})$. This implies the exponent vector $(1,1,1,2)$ generates $\relint \phi({{C}})$. Therefore, $\overline{R[It,t^{-1}]}$ is Gorenstein by \Cref{normal:main}. Furthermore, by \cite[Corolllary 6.3.6]{BH} the ideal corresponding to the $\relint \phi({{C}})$ is the graded canonical ideal of $\overline{R[It,t^{-1}]}$, and it is generated by $xyz t$ which corresponds to $(1,1,1,2)$. 
This shows that $a = a(\overline{R[It,t^{-1}]}) = -1 = 2 - (1+1+1) = q - d$ where $d = \dim R = 3$. Since $\overline{R[It,t^{-1}]}$ is Cohen-Macaulay, $a = r - d + 1$ where $r = r(\mathcal{F})$ the reduction number of the filtration $\mathcal{F} = \{ \overline{I^i} \}_{i  \in \mathbb{Z}}$. 
Therefore, $r = -1 + 3 - 1 = 1$. 
\end{exam}

\begin{cor}
With the setting of \Cref{normal:main}, one has $a(\overline{R[It,t^{-1}]}) \ge q - d$ and $r(\mathcal{F}) \ge q - 1$. Furthermore, if $\overline{R[It,t^{-1}]}$ is Gorenstein, then $a(\overline{R[It,t^{-1}]}) = q - d$ and $r(\mathcal{F}) = q - 1$. 
\end{cor}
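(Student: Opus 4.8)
The plan is to read off both invariants from the single distinguished minimal generator of the canonical module produced in \Cref{PartCan}, and then to convert the statement about the $\mathbf{a}$-invariant into one about the reduction number using the Cohen-Macaulayness of $\overline{R[It,t^{-1}]}$. Throughout write $\overline{T} = \overline{R[It,t^{-1}]}$, which is Cohen-Macaulay by Hochster \cite{Ho72}, and let $\omega$ be its graded canonical module. Recall from \Cref{sec:prel} that, with respect to the $\ZZ$-grading by $t$-degree, $a(\overline{T}) = -\indeg(\omega/\MM\omega)$, the negative of the least $t$-degree carried by a minimal homogeneous generator of $\omega$. By \cite[Theorem 6.3.5.(b)]{BH} the canonical module of this normal affine-semigroup ring is the ideal spanned by the lattice points of $\relint(\mathcal{C})$, so its minimal generators correspond to the minimal elements of $\relint(\phi(\mathcal{C}))$.

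First I would compute the $t$-degree of the distinguished generator. By \Cref{PartCan} the exponent vector $(1,\dots,1,q)\in\phi(\mathcal{C})$ belongs to a minimal generating set of the canonical ideal. Inverting the embedding $\phi$ of \Cref{Embedding}, under which $\phi(a_1,\dots,a_d,b)=(a_1,\dots,a_d,\,a_1+\cdots+a_d-b)$, shows that $(1,\dots,1,q)$ is the image of the monomial $x_1\cdots x_d\,t^{d-q}$, whose $t$-degree is $d-q$. Since this monomial is a minimal generator of $\omega$, the graded piece $[\omega/\MM\omega]_{d-q}$ is nonzero, so $\indeg(\omega/\MM\omega)\le d-q$ and therefore $a(\overline{T})\ge q-d$.

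Next I would pass to the reduction number. Because $\overline{T}$ is Cohen-Macaulay, the $\mathbf{a}$-invariant and the reduction number $r_I(\mathcal{F})$ of the integral closure filtration are linked by $a(\overline{T})=r_I(\mathcal{F})-d+1$; this is the interpretation, in terms of the $\mathbf{a}$-invariant, of the reduction number appearing in \cite[Proposition 5.4]{HKU11}, and it is exactly the relation used in \Cref{normal:exam}. Substituting the bound from the previous paragraph gives $r_I(\mathcal{F})=a(\overline{T})+d-1\ge q-1$, which is the second asserted inequality. For the equalities I would then invoke \Cref{normal:main}: if $\overline{T}$ is Gorenstein its canonical ideal is principal and, by \Cref{PartCan} together with \Cref{normal:main}, is generated precisely by the element corresponding to $(1,\dots,1,q)$; hence $\omega/\MM\omega$ is one-dimensional, concentrated in $t$-degree $d-q$, forcing $\indeg(\omega/\MM\omega)=d-q$ and $a(\overline{T})=q-d$, after which $a=r_I(\mathcal{F})-d+1$ yields $r_I(\mathcal{F})=q-1$.

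The main obstacle I anticipate is bookkeeping the two gradings consistently: the semigroup picture lives in the fine $\ZZ^{d+1}$-grading of \Cref{set:nor}, whereas the $\mathbf{a}$-invariant and the reduction number are defined through the coarse $\ZZ$-grading by $t$-degree. I must check both that the degree dictionary $b\mapsto a_1+\cdots+a_d-b$ is applied with the correct sign (so that $(1,\dots,1,q)$ genuinely sits in $t$-degree $d-q$, consistent with the value $a=q-d=-1$ computed in \Cref{normal:exam}), and that a minimal generator for the fine grading stays a minimal generator after coarsening. The latter holds because the maximal homogeneous ideal for the coarse grading is contained in that for the fine grading, so an element outside $\MM\omega$ for the fine grading remains outside it for the coarse grading.
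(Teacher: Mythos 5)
Your proposal is correct and takes essentially the same route as the paper, which proves this corollary implicitly through the reasoning of \Cref{normal:exam}: the distinguished generator $(1,\dots,1,q)$ from \Cref{PartCan} corresponds to $x_1\cdots x_d t^{d-q}$, of $t$-degree $d-q$, giving $a(\overline{R[It,t^{-1}]}) \ge q-d$ via $a = -\indeg(\omega/\MM\omega)$; the Cohen--Macaulay relation $a = r_I(\mathcal{F}) - d + 1$ then yields $r_I(\mathcal{F}) \ge q-1$, and \Cref{normal:main} forces equality in the Gorenstein case since the canonical ideal is then principal on that generator. Your added check that a minimal generator for the fine $\ZZ^{d+1}$-grading remains one for the coarse $t$-grading (because the coarse maximal homogeneous ideal sits inside the fine one) is a point the paper leaves implicit, and it is handled correctly.
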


\begin{proof}
Notice that $a(\overline{R[It,t^{-1}]}) = r(\mathcal{F}) - d + 1$ since $\overline{R[It,t^{-1}]}$ is Cohen-Macaulay. The inequality $a(\overline{R[It,t^{-1}]}) \ge q - d$ shows the first part of the statement. The second part follows from the fact that $a(\overline{R[It,t^{-1}]}) = r - d  + 1$ if $\overline{R[It,t^{-1}]}$ is Gorenstein. 
\end{proof}

\begin{cor}[{\cite[Theorem 5.6]{HKU11}}]\label{HKU5.4}  
With the setting of \Cref{normal:main}, assume that $I = (x_1^{a_1}, \dots, x_d^{a_d})$. Let $L = \lcm (a_1,\dots, a_d)$. 
Write $L/ a_1 + \cdots + L / a_d = jL + p$, where $j \ge 0$ and $1 \le p \le L$. Then  $\overline{R[It, t^{-1}]}$ is Gorenstein if and only if $p = 1$. 
\begin{proof}
Let $\{ \ee_i \}$ be the standard basis of $\mathbb{R}^d$.
Let $H^+_i = \{ v \in \mathbb{R}^{d} \mid \langle  \ee_i, v \rangle \ge 0 \} $ for $i = 1,\dots,d$. 
Observe that the half spaces which determine the integral closure of $I$ are the bounded half space $\{ v \in \mathbb{Z}^d \mid \langle (L/a_1, \dots, L/a_d) , v \rangle \ge L \}$ and the $H^+_i$'s, and the bounded half space corresponds to the Rees valuation of $I$. 
Now, we proceed as in \Cref{normal:exam}. The affine semigroup $\phi({{C}})$ is determined by the following half spaces represented as a matrix
\setlength\dashlinedash{1pt}
\begin{equation*}
M = 
\left[\,
\begin{array}{ c c c c c }
L/a_1 - L & L/a_2 - L & \cdots & L/a_d - L & L \\ \hdashline
1  & 0 & \cdots & 0 & 0\\ \hdashline
0 & 1 & \cdots & 0 &0 \\ \hdashline
\vdots & \vdots & \vdots & \vdots & \vdots \\ \hdashline
0 & 0 &  0 & 1 & 0
\end{array}\,\right]
\end{equation*}
,i.e., $
\phi(\CC) = \{ (x_1,\dots, x_{d+1}) \in \ZZ_{\ge 0}^{d+1} \mid M \cdot (x_1,\dots, x_{d+1}) \ge 0 \}.$  
Therefore one has 
\begin{equation*}
\relint \phi(\CC) = \{ (x_1,\dots, x_{d+1}) \in \ZZ^{d+1}_{\ge 0} \mid M \cdot (x_1,\dots, x_{d+1}) \gneq 0 \}.
\end{equation*}
Let $q$ be as in \Cref{PartCan}. 
Then one has
$(\cap \; N_i^+) \cap \phi(C) = \{ (x_1,\dots, x_{d+1}) \in \ZZ^{d+1}_{\ge 0} \mid M \cdot (x_1,\dots, x_{d+1}) \ge M \cdot (1,\dots,1,q) \}.$
By \Cref{normal:main} $R[It,t^{-1}]$ is Gorenstein if and only if $\relint \phi(\CC) \subseteq \cap N_i^+$ that is  
\begin{align*}
 \{ (x_1,\dots, x_{d+1}) &\in \ZZ^{d+1}_{\ge 0 } \mid M (x_1,\dots, x_{d+1})^{tr} \gneq 0 \} \\
 &\subseteq 
 \{ (x_1,\dots, x_{d+1}) \in \ZZ^{d+1}_{\ge 0} \mid M  (x_1,\dots, x_{d+1})^{tr} \ge M  (1,\dots,1,q)^{tr} \}.
 \end{align*}

Observe that 
\begin{equation*}
M 
\begin{bmatrix}
x_1\\
x_2\\
\vdots  \\
x_{d+1}\\
\end{bmatrix} 
= 
\begin{bmatrix}
\frac L{a_1}x_1  + \dots + \frac L {a_d}x_d + L(x_{d+1} - (x_1 + \dots + x_d )) \\ 
x_1\\
\vdots \\
x_d
\end{bmatrix}
~\text{and}~
M 
\begin{bmatrix}
1\\
\vdots  \\
1\\
q\\
\end{bmatrix} 
= 
\begin{bmatrix}
\frac L{a_1}  + \dots + \frac L{a_d} + L(q - d) \\ 
1\\
\vdots \\
1
\end{bmatrix}.
\end{equation*}
Hence for any $(x_1, \dots, x_{d+1})$ in the set $\{ (x_1,\dots, x_{d+1}) \in \ZZ^{d+1}_{\ge 0 } \mid M (x_1,\dots, x_{d+1})^{tr} \gneq 0 \}$, we have that $x_i \ge 1$ for $1 \le i \le d$. Let $\rho := \min \{ \frac L{a_1}x_1  + \dots + \frac L {a_d}x_d + L(x_{d+1} - (x_1 + \dots + x_d ))   \mid \text{for integers}~x_1, \dots, x_d \ge 1~\text{and}~ x_{d+1} \ge 0 \}$ and write $\eta = \frac L{a_1}  + \dots + \frac L{a_d} + L(q - d)$. 
It suffices to show that $\eta \le \rho$ if and only if $p = 1$.  
Since $(1,\dots,1,q)$ is in $\relint (\phi(\CC))$, one has $\eta \gneq 0$, i.e., $\eta \ge 1$. Assume that $\rho = 1$. Then $ \relint \phi(\CC) \subseteq \cap N_i^+$ if and only if $\eta = 1$, and the last condition is equivalent to the condition of $p = 1$. 
Hence it suffices to show that $\rho = 1$. \\

First, we claim that $\gcd(L/a_1 - L , L/a_2 - L , \cdots , L/a_d - L , L) = 1$. Recall that $L = \lcm (a_1,\dots,a_d)$. Since $\delta := \gcd(L/a_1 - L , L/a_2 - L , \cdots , L/a_d - L , L) = \gcd (L/a_1,  \cdots , L/a_d, L) =  \gcd (L/a_1,  \cdots , L/a_d)$, we have $\delta | (L / a_i)$ for all $i$. This implies that $a_i | (L/ \delta)$  for all $i$ since $\delta$ divides $L$. 
Hence we see that  $L/\delta \ge \lcm (a_1,\dots,a_d) = L$ and this implies that $\delta = 1$. 
Next, we claim that there exists $(y_1,\dots,y_{d+1}) \in \mathbb{Z}^{d+1}_{\ge 0}$ where $y_i > 0$ for $i = 1, \dots, d$ such that $\langle (L/a_1 - L, L/a_2 - L, \cdots, L/a_d - L, L), (y_1, \dots, y_{d+1}) \rangle = 1$. 
Since $\gcd (L/a_1 - L , L/a_2 - L , \cdots , L/a_d - L , L) = 1$, there exist such $(y_1, \dots, y_{d+1})$ in $\ZZ^{d+1}$. 
We show that one can modify the $y_i$'s so that $y_i > 0$ for all $i = 1, \dots, d$. 
Suppose that $i \le d$ is the least index where $y_i \le 0$. Let $n$ be an integer such that $y_i + nL > 0$. By replacing $y_i$ by $y_i + nL$ and $y_{d+1}$ by $y_{d+1} - n(L/ a_i - L)$, we may assume that $y_i > 0$. 
This shows that the first row of $M \cdot (y_1, \dots, y_{d+1})^{tr}$ is $1$ and all the other rows are positive. This indeed shows that $\rho = 1$ and this completes the proof. 
\end{proof}
\end{cor}

\begin{rmk} 
One may ask if the numbers $w_i$ in \Cref{normal:main} are the minimum in $\{ \langle (a_1 - h_i, \dots, a_d - h_i, h_i), v \rangle \mid v \in \mathbb{Z}^{d+1}_{\ge 0} \}$ for each $i$. The following example shows that it can happen that $w_i$ are not the minimum for each halfspace, but it is the minimum in the intersection, i.e., it is the minimum in the relative interior. \\
Let $R = \mathbb{C}[x,y]$. Let $I = (x^{3},x y,y^{3})$. Let $T = \overline{R[It, t^{-1}]}$. Then $T$ is Gorenstein, and $\phi({{C}}_T)$  is determined by the half spaces
\[
\begin{bmatrix} 
-1 & -2 & 3  \\
-2  & -1 & 3 \\
1 & 0 &  0 \\ 
0 & 1 & 0 
\end{bmatrix}.
\]
One can easily see that $q = 2$. But $\langle (-1,-2,3),(3,1,2) \rangle = 1$ whereas $w_1 = \langle (-1,-2,3),(1,1,2) \rangle = 3$.\\ 
\end{rmk}

\section{The Cohen-Macaulayness of the associated graded ring}\label{sec:grDomain}
Let $(R,\m)$ be a regular local ring and $I$ an ideal. Let $J$ be a proper ideal. We define a function $\ord_{J}: R \rightarrow \mathbb{N}_{\ge 0} \cup \{ \infty \}$ as follows; $\ord_{J} (x) := \sup \{ i \mid x \in J^i \}$ for $x \in R$ and $\ord_J (I) := \inf \{ \ord_J(x) \mid x \in I \}$. By Krull's intersection theorem, this number is finite if $x \neq 0$. In general, we have $\ord_{\m}(xy) \ge \ord_{\m}(x)  + \ord_{\m}(y)$. Since $\gr_{\m} (R)$ is a domain, $\ord_{\m}$ is a valuation, i.e., $\ord_{\m}(xy) = \ord_{\m}(x)  + \ord_{\m}(y)$ for $x,y \in R$. 
For $x \in R$, let $x^* := x + \m^{\ord_{\m}(x)+1} \in \m^{\ord_{\m}(x)} / \m^{\ord_{\m}(x)+1} $ denote its image in $\gr_I(R)$. We call $x^*$ the \textit{leading form} of $x$ and $I^* := ( x^* \mid x \in I )$ the \textit{leading ideal} of $I$, respectively.

\begin{rmk}\label{Completetion}
Since $(R,\m)$ is a regular local ring, so is $(\widehat{R}, \widehat{\m})$ where $\widehat{\phantom{x}}$ denotes $\m$-adic completion. Because $\gr_{\m} (R) = \gr_{ \widehat{\m}} (\widehat{R})$, we have $\ord_{\m}(x) = \ord_{\widehat{\m}} (x)$ and $\ord_{\m}(I) = \ord_{\widehat{\m}}(I \widehat{R})$. In particular, we have $I^* = (\widehat{I})^*$ in $\gr_{\m} (R) = \gr_{ \widehat{\m}} (\widehat{R})$. Since $\widehat{R}$ is complete, we may write $x = \sum_{ i \ge 0 }^\infty [x]_i = \sum_{ i \ge \ord_{\m}(x) }^\infty [x]_i$ where $[x]_i  \in \widehat{\m}^i \setminus \widehat{\m}^{i+1} \cup \{ 0 \}$ and $x^* = ([x]_{\ord_{\m}(x)})^*$. 
\end{rmk}

\begin{lem}\label{S:ecom2} Let $(R,\m)$ be a regular local ring with maximal ideal $\m$. Let $I$ be an ideal which is minimally generated by the $2$ by $2$ minors of the $2$ by $3$ matrix $\displaystyle M = 
\begin{pmatrix} 
	a & b & c \\
	u & v & w
	\end{pmatrix}$ whose entries are in $\m$.
Write $G := \gr_{\m} (R)$. Let $L = ( b^* w^* - c^* v^*, -(a^* w^* - c^* u^* ),a^* v^* - b^* u^*) $ and let $C_\bullet$ be the complex 
\[
\begin{CD}
0 \rightarrow G^2 @>{\begin{bmatrix} a^* & u^* \\ b^* &   v^* \\ c^* & w^*   \end{bmatrix} }>> G^3 @>{ \small{ \begin{bmatrix} b^* w^* - c^* v^* & -(a^* w^* - c^* u^* ) & a^* v^* - b^* u^*   \end{bmatrix}  } } >> G.
\end{CD}
\]
If $L$ is of height $2$, then $C_\bullet$ is acyclic. Furthermore, if any two elements in the generating set of $L$ are homogeneous, then so is the third and $I^* = L = ((bw-cv)^*, (aw-cu)^*, (av-bu)^*)$. 
\begin{proof}
The acyclicity of $C_\bullet$ follows by the Buchbaum-Eisenbud acyclicity criterion \cite[Corollary 1]{BE73}. 
If any two elements of the generating set of the $L$ are homogeneous, then it is easy to see that the columns of the matrix representing the differential map $G^2 \to G^3$ have the same degree. Hence $L$ is homogeneous. 
Since $C_\bullet$ acyclic and all the entries of the maps are in the maximal homogenous ideal of $\gr_{\m} ( R)$, the ideal $(b^* w^* - c^* v^*, -(a^* w^* - c^* u^* ), a^* v^* - b^* u^*)$ is minimally generated by these three elements. In particular, these elements are not zero. This implies that $(bw-cv)^* = b^* w^* - c^* v^*, (aw-cu)^*  = a^* w^* - c^* u^*, (av-bu)^* = a^* v^* - b^* u^*$. \\

We are going to show that $I^* = L^*$.
Let $I = (f_1, f_2, f_3)$ where $f_1 = bw-cv, f_2 = -(aw-cu), f_3= av-bu$. By \Cref{Completetion} we may assume that $R$ is complete. By definition, $(f_1^*,f_2^*,f_3^*) \subseteq I^*$. Suppose $(f_1^*,f_2^*,f_3^*) \neq I^*$. Then there exists $x \in I$ such that $x^* \in I^* \setminus (f_1^*,f_2^*,f_3^*)$. Since $x \in I$, we can write $x = g_1f_1 + g_2f_3 +  g_3f_3$ for some $g_i \in R$. Since we are in a complete local ring, we can write $x = \sum_{i \ge 0}^\infty [x]_i$ as in \Cref{Completetion}. 
Observe that $\ord_{\m}(x) = \ord_{\m}( g_1f_1 + g_2f_2 +g_3 f_3)  \ge \min \{ \ord_{\m}(g_i f_i) \}_{i = 1,2,3}$. Since $x^* \not\in (f_1^*,f_2^*,f_3^*)$, the inequality is strict. We are going to show that this leads to a contradiction. \\

The set $\Gamma = \{ \min \{ \ord_{\m}(g_i f_i)_{i = 1,2,3} \} \mid \hbox{ for a triple } g_i \hbox{ such that } \, x = \sum g_i f_i \}$ is finite since for any $n \in \Gamma, n \le \ord_\m (x)$.  
We choose $g_i$'s such that the number $\min \{ \ord_{\m}(g_i f_i) \}_{i = 1,2,3}$ is the maximum in $\Gamma$.
We are going to construct $g'_i$'s such that $x =  g'_1f_1 + g'_2f_3 +  g'_3f_3$ and $\min \{ \ord_{\m}(g'_i f_i) \} \gneq \min \{ \ord_{\m}(g_i f_i) \}$. This will contradict the maximality. \\

Let $n := \min \{ \ord_{\m}(g_i f_i) \}$. If $\ord_{\m}(x) = n$, then $x^* \in (f_1^*,f_2^*,f_3^*)$. Therefore we may assume that $n < \ord_{\m}(x)$. 
Let $\delta_i := \ord_{\m}(f_i)$ for $i = 1,2,3$. We have 
$([g_1 f_1 + g_2 f_2 +  g_3 f_3]_n)^* = ([g_1]_{n - \delta_1})^* (f_1)^* + ([g_2]_{n - \delta_2})^* (f_2)^* + ([g_3]_{n - \delta_3})^* (f_3)^* = 0$. That is $([g_1]_{n - \delta_1})^* , ([g_2]_{n - \delta_2})^*, ([g_3]_{n - \delta_3})^*$ is a relation on $f_1^*, f_2^*, f_3^*$. Since the complex ${C}_\bullet$ is acyclic, there exists 
$s, t$ in $R$ such that 
\begin{equation*}
( ([g_1]_{n - \delta_1})^* , ([g_2]_{n - \delta_2})^*, ([g_3]_{n - \delta_3})^* )= s^* (a^*, b^*, c^*) + t^*( u^*, v^*, w^*) = (s^* a^*- t^* u^*, s^* b^*- t^* v^*,s^* c^*- t^* w^*).
\end{equation*}
Let $g'_1 = g_1 - (s a + t u), g'_2 = g_2 + (s b+t v)$, and $g'_3 = g_3 -  (s c + t w)$. 
Then $x = g_1 f_1 + g_2 f_2 + g_3 f_3 =  g'_1 f_1 + g'_2 f_2 + g'_3 f_3$ and $\min \{ \ord_{\m}(g_i f_i) \} \lneq \min \{ \ord_{\m}(g'_i f_i) \}$. This is a contradiction. 
\end{proof}
\end{lem}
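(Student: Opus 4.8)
The plan is to prove the two assertions separately: first obtain acyclicity of $C_\bullet$ from a determinantal acyclicity criterion, and then use that acyclicity as the engine driving both the leading-form identities for the individual minors and the global equality $I^* = (f_1^*,f_2^*,f_3^*)$.

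First I would establish acyclicity. Writing $\phi_2$ and $\phi_1$ for the two maps, the complex has the shape $0 \to G^2 \xrightarrow{\phi_2} G^3 \xrightarrow{\phi_1} G$ of a truncated Hilbert--Burch complex over $G = \gr_{\m}(R)$, which is a polynomial ring, hence a Cohen--Macaulay domain, so grade equals height for every ideal. I would apply the Buchsbaum--Eisenbud acyclicity criterion \cite[Corollary 1]{BE73}. The expected ranks are $\rk \phi_2 = 2$ and $\rk \phi_1 = 1$, and these are attained as soon as $L \neq 0$, since the ideal of maximal minors of the $3\times 2$ matrix $\phi_2$ and the ideal of entries of the row $\phi_1$ are both exactly $L$. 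The grade hypotheses of the criterion then read $\grade I_2(\phi_2) = \grade L \ge 2$ and $\grade I_1(\phi_1) = \grade L \ge 1$, both of which follow from $\he L = 2$ and Cohen--Macaulayness of $G$. Hence $C_\bullet$ is acyclic.

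Next I would read off the leading-form identities. All entries of $\phi_1$ and $\phi_2$ lie in the homogeneous maximal ideal of $G$, because $a,\dots,w \in \m$ forces their leading forms into positive degree; therefore the acyclic $C_\bullet$ is a minimal free resolution (equivalently, minimal after localizing at the homogeneous maximal ideal). Minimality forces the three entries of $\phi_1$ to be a minimal generating set of $L$, so in particular $b^*w^* - c^*v^*$, $a^*w^* - c^*u^*$ and $a^*v^* - b^*u^*$ are all nonzero in $G$. Since $\ord_{\m}$ is a valuation on the domain $G$, leading forms are multiplicative, so $(bw)^* = b^*w^*$ and $(cv)^* = c^*v^*$; the nonvanishing of $b^*w^* - c^*v^*$ says exactly that no cancellation of lowest-degree terms occurs, which identifies $(bw-cv)^* = b^*w^* - c^*v^*$, and likewise for the other two minors. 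This yields $(f_1^*,f_2^*,f_3^*) = L$.

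Finally, for $I^* = (f_1^*,f_2^*,f_3^*)$ I would show the minors form a standard basis by a lifting-of-syzygies descent. Passing to the completion is harmless by \Cref{Completetion}, and the inclusion $(f_1^*,f_2^*,f_3^*) \subseteq I^*$ is immediate. For the reverse, suppose some $x \in I$ has $x^* \notin (f_1^*,f_2^*,f_3^*)$, and among all representations $x = g_1f_1 + g_2f_2 + g_3f_3$ choose one maximizing $n := \min_i \ord_{\m}(g_if_i)$ (the maximum exists, as these values are bounded above by $\ord_{\m}(x)$). If $n = \ord_{\m}(x)$ the degree-$n$ part already places $x^*$ in $(f_1^*,f_2^*,f_3^*)$, so one may assume $n < \ord_{\m}(x)$; then the degree-$n$ leading forms cancel, giving a syzygy $\sum_i ([g_i]_{n-\delta_i})^* f_i^* = 0$ with $\delta_i = \ord_{\m}(f_i)$. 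Here acyclicity of $C_\bullet$ is precisely what is needed: this syzygy lies in the image of $\phi_2$, hence is a $G$-combination of the two columns of $\phi_2$; lifting the combining coefficients to $s,t \in R$ and subtracting the corresponding genuine $R$-syzygy of the minors (the Hilbert--Burch relations given by the rows of $M$, which vanish identically) produces a new representation $x = \sum g_i'f_i$ with strictly larger $\min_i \ord_{\m}(g_i'f_i)$, contradicting maximality.

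The hard part will be this last descent: orchestrating the passage between the graded computation in $G$, where acyclicity manufactures a relation among the leading forms, and the ungraded ring $R$, where that relation must be cancelled using an honest syzygy. The crux is that lifting the homogeneous syzygy coefficients to $s,t \in R$ and replacing $(g_1,g_2,g_3)$ by its difference with the image of $(s,t)$ under $\phi_2$ genuinely raises the minimal order $\min_i \ord_{\m}(g_if_i)$ while still representing $x$; verifying this order increase is the technical heart. Everything else reduces to the Buchsbaum--Eisenbud criterion together with multiplicativity of leading forms.
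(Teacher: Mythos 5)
Your proposal is, step for step, the paper's own proof: Buchsbaum--Eisenbud for acyclicity (your rank and grade verification is exactly the detail the paper leaves to the reader), minimality of the resolution to get nonvanishing of the entries of $\phi_1$, the leading-form identities deduced from that nonvanishing, and the same descent over representations $x=\sum g_i f_i$ maximizing $\min_i\ord_{\m}(g_if_i)$, corrected by lifting syzygies through $\phi_2$ and using the Hilbert--Burch row relations. So as a comparison, the two proofs coincide.

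However, the step you state as ``the nonvanishing of $b^*w^*-c^*v^*$ says exactly that no cancellation of lowest-degree terms occurs, which identifies $(bw-cv)^*=b^*w^*-c^*v^*$'' is a genuine gap, and it is precisely the gap in the paper's own proof, which you have reproduced blind. Nonvanishing excludes cancellation only when $\ord_{\m}(bw)=\ord_{\m}(cv)$; the hypotheses do not force these orders to agree, and if they differ then $(bw-cv)^*$ equals whichever of $b^*w^*$, $-c^*v^*$ has lower degree, not their (nonzero) difference. A concrete instance satisfying every hypothesis of the lemma: $R=k[[x,y]]$ and
\[
M=\begin{pmatrix} x & x^2 & y\\ y & y & x\end{pmatrix}.
\]
The minors $x^3-y^2$, $x^2-y^2$, $xy-x^2y=xy(1-x)$ minimally generate $I=\m^2$ (note $(x^3-y^2)-(x^2-y^2)=x^2(x-1)$ and $1-x$ is a unit), and $L=(x^3-y^2,\,x^2-y^2,\,xy-x^2y)$ has height $2$ in $G=k[x,y]$, since its generators have no common factor and $\dim G=2$. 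Nevertheless $(bw-cv)^*=(x^3-y^2)^*=-y^2\neq x^3-y^2=b^*w^*-c^*v^*$, because $\ord_{\m}(bw)=3>2=\ord_{\m}(cv)$.

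This failure is not cosmetic: your descent needs every syzygy of $(f_1^*,f_2^*,f_3^*)$ to be a $G$-combination of the columns of $\phi_2$, which acyclicity of $C_\bullet$ delivers only if the $f_i^*$ are the entries of $\phi_1$. In the example above they are not, and the first column of $\phi_2$ is not even a syzygy of the true leading forms: $x\cdot(-y^2)+x^2\cdot(y^2-x^2)+y\cdot(xy)=x^2y^2-x^4\neq 0$. (The conclusion $I^*=(f_1^*,f_2^*,f_3^*)$ happens to hold here, both sides being $(x,y)^2$, so this refutes the proof rather than the statement.) What both you and the paper implicitly assume is the order-matching condition $\ord_{\m}(a)-\ord_{\m}(u)=\ord_{\m}(b)-\ord_{\m}(v)=\ord_{\m}(c)-\ord_{\m}(w)$, equivalently that $C_\bullet$ can be made a graded complex; granted this, nonvanishing does yield all three identities, and the remainder of both arguments is sound. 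That extra condition is in fact available where the lemma is invoked, since \Cref{value1} and \Cref{oneforall} first arrange, by row and column operations, that two of the three identities hold, and any two of them force the equality of orders needed for the third. A careful write-up must either add this hypothesis to the lemma or prove it from the stated ones, and the example shows the latter is impossible.
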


We first state a couple of folklore remarks which will be useful for the proofs of \Cref{value1,oneforall}. 
\begin{rmk}\label{primeElement}
Let $(R,\m)$ be a Noetherian local ring. Let $x,y,u,v$ be elements in $\m$ such that $xy-uv \neq 0$. If $(xy-uv)^*$ is a prime element in $\gr_{\m}(R)$, then $\ord_{\m} (xy) = \ord_{\m} (uv)$. 
\begin{proof}
Assume to the contrary $\ord_{\m} (xy) \neq \ord_{\m} (uv)$. Without loss of generality, we may assume that $\ord_{\m} (xy) < \ord_{\m} (uv)$. Then $(xy-uv)^* = (xy)^* = x^* y^*$. Since $x^*, y^*$ are in the maximal homogeneous ideal of $\gr_{\m} (R)$, neither of them is a unit. Therefore the product $x^* y^*$ is not a prime element. This is a contradiction.
\end{proof}
\end{rmk}

\begin{rmk}\label{ordIne}
Let $(R,\m)$ be a Noetherian local ring. Let $x,y$ be elements in $\m$. If $\ord_{\m}(x) = \ord_{\m} (y)$ and $x^* + y^* \neq 0$, then $(x+y)^* = x^* + y^*$. In particular, $\ord_{\m}(x+y) = \ord_{\m} (x)$.
\begin{proof}
Let $G = \gr_{\m}(R)$. Recall that $\ord_{\m} (x+y) = \ord_{G_+} ((x+y)^*)$ 
where $G_+ = \oplus_{i > 0} G_i$. We have $\ord_{G_+} (x^* + y^*) \ge \min \{ \ord_{G_+} (x^*), \ord_{G_+} (y^*) \}$. If this is a strict inequality, then $(x+y)^* = x^*$ or $y^*$, and this is a contradiction. Also if $\ord_{G_+} (x^*) \neq \ord_{G_+} (y^*)$, then $(x+y)^* = x^*$ or $y^*$, and this is a contradiction.
\end{proof}

\end{rmk}

\begin{lem}\label{value1}
Let $(R,\m)$ be a regular local ring with maximal ideal $\m$. Let $I$ be an ideal of height $2$ which is minimally generated by the $2 \times 2$ minors of the $2 \times 3$ matrix $M$ with entries in $\m$. Assume that the leading forms of any two minors form part of a minimal generating set of $I^*$. If $I^*$ is a prime ideal and $I_1(M) \nsubseteq \m^2$, then we can find a matrix 
\[
\widetilde{M} = 	\left( 
	\begin{array}{ccc}
	a  & b  & c  \\
	u  & v  & w 
	\end{array} \right)
\]
such that $I = I_2(\widetilde{M})$ and  $I^* = ((b w -c v )^*, (a w -c u )^*, (a v -b u )^*)$ is perfect of height $2$. 
\begin{proof}
Write 
\[
M = 	\left( 
	\begin{array}{ccc}
	a & b & c \\
	u & v & w
	\end{array} \right).
\]
By switching rows and columns, which preserves the minors up to sign, we may assume that $a$ is in $\m \setminus \m^2$. Henceforth we will only use the assumption that $(av-bu)^*, (aw-cv)^*$ form part of a minimal generating set of $I^*$. 
We are going to modify $M$ by applying row and column operations to obtain $\widetilde{M}$ with the desired properties. 
Observe that adding a multiple of a row to the other does not change $2 \times 2$ minors, but adding a multiple of a column to another changes one minor, but does not change the other two in general. In both cases the ideal $I_2(-)$ does not change. In the proof the only type of column operations we perform is adding a multiple of the first column to the second or the third column. This does not change the minors $av-bu, aw-cu$, but the minor $bw-cv$ will be changed to $bw-cw + f(av-bu) + g( aw-cu)$ for some $f, g$ in $R$. \\

We claim that after performing row and column operations on $M$, we may assume that $(av-bu)^* = a^*v^* - b^* u^*$. Assume to the contrary that $(av-bu)^* \neq a^*v^* - b^* u^*$. Since $(av-bu)^*$ is part of a minimal generating set of a prime ideal $I^*$, $(av-bu)^*$ is a prime element. By \Cref{primeElement} we have $\ord_{\m} (av) = \ord_{\m} (bu)$.
Therefore, by \Cref{ordIne} one has $a^*v^* - b^* u^* = 0$ equivalently  $a^*v^* = b^* u^*$. Since $\gr_{\m} (R)$ is a UFD and $a^*$ is of degree $1$, it is a prime element. Hence we have $a^* | b^*$ or $a^* | u^*$. Suppose $a^* | b^*$.  Then there exists $\delta$ in $R$ such that $a^* \delta^* = b^*$, and this implies $\ord_{\m}(b) \lneq \ord_{\m}(b - \delta a)$.
We subtract the first column multiplied by $\delta$ from the second column to obtain
\[
M' = \left( 
	\begin{array}{ccc}
	a & b-\delta a & c \\
	u & v-\delta u & w
	\end{array} \right).
\]  
Since $\ord_{\m}(b) \lneq \ord_{\m}(b- \delta a)$, we have $ \ord_{\m}(ub) \lneq \ord_{\m}( u(b- \delta a))$. 
We replace $M$ by $M'$. The column operation changes the minors $av-bu,aw-cu,bw-cv$ to $av-bu,aw-cu,bw-cv+ \delta(aw-cu)$. 
If $a^* | u^*$, then we perform a row operation to obtain new $u, v, w$. We note that the row operation does not change the minors. 
We claim that this process terminates. Each time we replace $M$ by $M'$, either $\ord_{\m}(b)$ or $\ord_{\m}(u)$ strictly increases whereas $\ord_{\m}(av-bu)$ is fixed.
By \Cref{primeElement}, we have $\ord_{\m}(av) = \ord_{\m}(bu)$, 
and this implies $\ord_{\m}(av-bu) \ge  \min \{ \ord_{\m}(av), \ord_{\m}(bu) \} = \ord_{\m}(bu)$. 
The number $\ord_{\m}(av-bu)$ is fixed whereas $\ord_{\m}(bu)$ is strictly increasing after each process. Therefore this will terminate, and we obtain $(av-bu)^* = a^* v^* -b^* u^*$. \\ 

We are going to show that by subtracting a multiple of the first column from the third column, we can obtain a matrix $M$ such that $(av-bu)^* = a^* v^* -b^* u^*, (aw-cu)^* = a^* w^* - c^* u^*$. 
In particular, this will not change the entries $a,b,u,v$ of the matrix $M$; hence we preserve the property $(av-bu)^* = a^* v^* -b^* u^*$. 
Suppose $(aw-cu)^* \neq a^* w^* - c^* u^*$. Since $(aw-cu)^*$ is part of a minimal generating set of $I^*$, it is a prime element. By \Cref{primeElement} we have $\ord_{\m}(aw) = \ord_{\m} (cu)$  and $a^* w^* = c^* u^*$. If $a^* | u^*$, then the prime element $(av-bu)^* = a^* v^* -b^* u^*$ is divisible by $a^*$. This contradicts the fact that $a^*v^*-b^*u^*$ is a prime element. 
Therefore we have $a^* | c^*$.  
Then there exists $\delta$ in $R$ such that $a^* \delta^* = c^*$. We subtract the first column multiplied by $\delta$ from the third column. In particular, this does not change the entries $a,b,u,v$ of $M$. This process terminates, and we have $(av-bu)^* = a^* v^* -b^* u^*$ and $(aw-cu)^* = a^* w^* - c^* u^*$. \\

By \Cref{S:ecom2} it suffices to show that $\he (a^* w^* - c^*u^*, a^*v^* - b^*u^*) = 2$. The images $(av-bu)^*,(aw-cu)^*$ form a part of a minimal generating set of $I^*$. This implies that $((av-bu)^*)$ is a prime ideal in $\gr_{\m}(R)$ and $(aw-cu)^* \notin ((av-bu)^*)$. Hence height of the ideal $\he ((av-bu)^*,(aw-cu)^*)$ is $2$. Indeed our new $M$ is $\widetilde{M}$ in the statement. 
\end{proof}
\end{lem}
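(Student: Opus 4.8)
The plan is to reduce everything to \Cref{S:ecom2}. That lemma says that once the ideal $L$ spanned by the formal leading forms $a^*v^*-b^*u^*$, $a^*w^*-c^*u^*$, $b^*w^*-c^*v^*$ has height $2$, the complex $C_\bullet$ is acyclic---a length-two free resolution of $G/I^*$ over $G=\gr_{\m}(R)$---and moreover $I^*$ is generated by the leading forms of the three minors; acyclicity of a length-two complex forces $\pd_G(G/I^*)=2=\he I^*$, i.e.\ perfection. So the whole content of the lemma becomes the problem of \emph{normalizing} $M$: using only row and column operations that preserve $I_2(M)=I$, I want to reach a matrix $\widetilde M$ for which the leading form of two of the minors equals the corresponding formal leading form, e.g.\ $(av-bu)^*=a^*v^*-b^*u^*$. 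I will carry out the operations directly on $M$, writing its entries as $a,b,c,u,v,w$ throughout and renaming the final matrix $\widetilde M$.

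First I would use the hypothesis $I_1(M)\nsubseteq\m^2$. After permuting rows and columns---which only permutes and re-signs the minors and hence changes neither $I$ nor the minimal generation of $I^*$---I may assume the entry $a$ lies in $\m\setminus\m^2$, so that $a^*$ has degree one. Since $R$ is regular, $G=\gr_{\m}(R)$ is a polynomial ring, hence a UFD in which the degree-one element $a^*$ is prime; this primeness is the engine behind the clearing operations below.

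The core step is to force $(av-bu)^*=a^*v^*-b^*u^*$. If this fails, then $(av-bu)^*$---prime by the minimal-generation hypothesis---together with \Cref{primeElement} gives $\ord_{\m}(av)=\ord_{\m}(bu)$, and \Cref{ordIne} then yields the cancellation $a^*v^*=b^*u^*$. Primeness of $a^*$ forces $a^*\mid b^*$ or $a^*\mid u^*$; writing $b^*=a^*\delta^*$ and subtracting $\delta$ times the first column from the second (or, in the $u$ case, performing the analogous row operation) strictly raises $\ord_{\m}(b)$ (resp.\ $\ord_{\m}(u)$) while leaving the minor $av-bu$ itself unchanged. Since $\ord_{\m}(av-bu)$ stays fixed and dominates the strictly increasing integer $\ord_{\m}(bu)$, the loop must terminate, and at termination $(av-bu)^*=a^*v^*-b^*u^*$. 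I would then rerun the same argument using the first and third columns to secure $(aw-cu)^*=a^*w^*-c^*u^*$.

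The hard part is the interaction between the two repairs: fixing the second minor could in principle undo the first. The asymmetry from choosing $a$ as pivot is what saves the day---the second round only adds multiples of the first column to the third, touching $c$ and $w$ but never $a,b,u,v$, so the identity $(av-bu)^*=a^*v^*-b^*u^*$ survives (one also notes $a^*\mid u^*$ cannot recur, since it would make the prime $(av-bu)^*$ divisible by $a^*$). After both normalizations, $(av-bu)^*$ and $(aw-cu)^*$ are part of a minimal generating set of the prime $I^*$, so $\big((av-bu)^*\big)$ is prime and $(aw-cu)^*\notin\big((av-bu)^*\big)$; hence $\he\big((av-bu)^*,(aw-cu)^*\big)=2$, and therefore $\he L=2$ since the ideal of $2\times 2$ minors of a $2\times 3$ matrix has height at most $2$. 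Applying \Cref{S:ecom2} to $\widetilde M$ then delivers $I=I_2(\widetilde M)$ together with the claimed perfect height-two description of $I^*$.
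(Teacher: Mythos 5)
Your proposal is correct and follows essentially the same route as the paper: reduce to \Cref{S:ecom2}, use $I_1(M)\nsubseteq\m^2$ to place a pivot $a\in\m\setminus\m^2$ whose leading form is prime in the UFD $\gr_\m(R)$, run the same \Cref{primeElement}/\Cref{ordIne} clearing loop with the same termination argument (fixed $\ord_\m(av-bu)$ bounding the strictly increasing $\ord_\m(bu)$), then repair the second minor via column operations that leave $a,b,u,v$ untouched while ruling out $a^*\mid u^*$ by primeness of $(av-bu)^*$, and finally deduce $\he L=2$ from the two normalized leading forms. Your explicit remarks that the Eagon--Northcott bound gives $\he L\le 2$ and that acyclicity of the length-two complex yields perfection only make explicit two small points the paper leaves implicit.
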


\begin{lem}\label{oneforall}Let $(R,\m)$ be a regular local ring with maximal ideal $\m$. Let $I$ be an ideal of height $2$ which is minimally generated by the $2$ by $2$ minors of the $2$ by $3$ matrix $M$ with entries in $\m$ where 
\[
M = 	\begin{pmatrix} 
	a & b & c \\
	u & v & w
	\end{pmatrix}.
\]
Assume that $(av-ub)^*,(aw-cu)^*$ is part of a minimal generating set of $I^*$. 
If $I^*$ is a prime ideal and $(av-bu)^* = a^*v^*-b^*u^*$, then we have $I^* = ((bw-cv + f(av-bu))^*, (aw-cu)^*, (av-bu)^*)$ for some $f$ in $R$, and this ideal is perfect of height $2$. 
\begin{proof}
Suppose $(aw-cu)^* \neq a^*w^*-c^*u^*$. 
Since $(av-bu)^*, (aw-cu)^*$ form part of a minimal generating set of a prime ideal, they are prime elements. By \Cref{primeElement} we have $\ord_{\m} (aw) = \ord_{\m} (cu)$. Therefore we have $a^*w^*-c^*u^* = 0$. Since $a^*v^*- b^*u^*$ is a prime element and $\gr_{\m}(R)$ is a UFD, $\gcd (a^*, u^*) \sim 1$. This implies that $a^* | c^*$ and $u^* | w^*$. Hence we may write $c^* = \delta^* a^*$ and $w^* = \delta^* u^*$ for some $\delta$ in $R$. Let $M'$ be a matrix modified by subtracting the first column of $M$ multiplied by $\delta$ from the third column. This column operation does not change the entries $a,b,u,v$ of the matrix $M$. Notice that $\ord_{\m}(c-\delta a) \gneq \ord_{\m}(c)$ and $\ord_{\m}(w - \delta u) \gneq \ord_{\m} (w)$. As in the proof of \Cref{value1}, this process will terminate. Hence $(aw-cu)^* = a^*w^*-c^*u^*$. Notice that the $2 \times 2$ minors of $M'$ are $av-bu, aw-cu, bw-cv + f(av-bu)$ for some $f$ in $R$.\\

 Since $(av-ub)^*,(aw-cu)^*$ form a part of a minimal generating set of $I^*$ and $(av-ub)^*$ is a prime element, we have $\he ((av-ub)^*, (aw-cu)^*) = 2$. The result follows by applying \Cref{S:ecom2} to the matrix $M'$.
\end{proof}
\end{lem}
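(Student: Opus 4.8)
The plan is to follow the template of the second half of the proof of \Cref{value1}, exploiting that the present hypotheses already grant $(av-bu)^*=a^*v^*-b^*u^*$. The only identity still missing is the companion relation $(aw-cu)^*=a^*w^*-c^*u^*$; once both hold simultaneously for a matrix whose ideal of $2\times 2$ minors is still $I$, \Cref{S:ecom2} will deliver the description of $I^*$ and its perfection in one stroke. Crucially, I would achieve the missing relation by a column operation touching only the third column, so that the entries $a,b,u,v$, and hence the already-established identity for $av-bu$, are left intact.

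First I would assume $(aw-cu)^*\neq a^*w^*-c^*u^*$ and manufacture the operation. Since $(av-bu)^*$ and $(aw-cu)^*$ are minimal generators of the prime ideal $I^*$ in the polynomial ring $\gr_{\m}(R)$, both are prime elements. Applying \Cref{primeElement} to $aw-cu$ gives $\ord_{\m}(aw)=\ord_{\m}(cu)$, whence \Cref{ordIne} forces $a^*w^*-c^*u^*=0$, i.e.\ $a^*w^*=c^*u^*$. Because $\gr_{\m}(R)$ is a UFD and $a^*v^*-b^*u^*$ is prime, $a^*$ and $u^*$ are coprime (a common nonunit factor would properly divide the irreducible $a^*v^*-b^*u^*$ by a degree count); combined with $a^*w^*=c^*u^*$ this yields $a^*\mid c^*$ and $u^*\mid w^*$, and cancelling $a^*u^*$ shows $c^*=\delta^*a^*$ and $w^*=\delta^*u^*$ for one and the same $\delta$. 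Subtracting $\delta$ times the first column from the third then strictly raises $\ord_{\m}(c)$ and $\ord_{\m}(w)$. A direct computation shows this operation fixes the minors $av-bu$ and $aw-cu$ and replaces $bw-cv$ by $bw-cv+\delta(av-bu)$; accumulating the successive $\delta$'s is precisely where the term $f(av-bu)$ in the statement originates.

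The main obstacle is the termination of this descent, which I would control through the invariance of $aw-cu$. Since that minor is unchanged throughout and prime, \Cref{primeElement} keeps $\ord_{\m}(aw)=\ord_{\m}(cu)$, so $\ord_{\m}(aw-cu)\ge \ord_{\m}(cu)=\ord_{\m}(c)+\ord_{\m}(u)$, using that $\gr_{\m}(R)$ is a domain and hence $\ord_{\m}$ is a valuation. As $\ord_{\m}(u)$ is fixed while $\ord_{\m}(c)$ strictly increases at each step, the right-hand side tends to infinity against the fixed bound $\ord_{\m}(aw-cu)$; hence only finitely many steps are possible, and the process ends with a matrix $\widetilde M$ satisfying $(aw-cu)^*=a^*w^*-c^*u^*$ together with $(av-bu)^*=a^*v^*-b^*u^*$.

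Finally, for $\widetilde M$ I would verify $\he L=2$ for the ideal $L$ of its leading-form minors and invoke \Cref{S:ecom2}. Here $((av-bu)^*)$ is a height-one prime and $(aw-cu)^*$ lies outside it (the two are distinct minimal generators of $I^*$), so $\he((av-bu)^*,(aw-cu)^*)=2$, while the determinantal height bound for the $2\times 2$ minors of a $2\times 3$ matrix gives $\he L\le 2$, forcing equality. Then \Cref{S:ecom2} identifies $I^*$ with $((bw-cv+f(av-bu))^*,(aw-cu)^*,(av-bu)^*)$ and shows the associated complex $C_\bullet$ is acyclic. Since $C_\bullet$ is a length-two free resolution of $\gr_{\m}(R)/I^*$ over the Cohen-Macaulay ring $\gr_{\m}(R)$ and $\grade L=2$, the Hilbert-Burch structure yields that $I^*$ is perfect of height $2$.
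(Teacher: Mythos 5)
Your proof is correct and takes essentially the same route as the paper's: you derive $a^*w^*=c^*u^*$ via \Cref{primeElement}, use coprimality of $a^*,u^*$ in the UFD $\gr_{\m}(R)$ to produce $\delta$, perform the column operation touching only the third column so that $av-bu$ and $aw-cu$ are preserved, argue termination by playing the fixed value of $\ord_{\m}(aw-cu)$ against the strictly increasing $\ord_{\m}(c)+\ord_{\m}(u)$, and conclude with the height-two check and \Cref{S:ecom2}. The only difference is that you spell out details the paper leaves implicit (the termination bound, which it defers to \Cref{value1}, the degree count behind $\gcd(a^*,u^*)\sim 1$, and the final perfection argument), so the two arguments are the same in substance.
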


\begin{thm}\label{thm:integralGr}
Let $S \cong R/I$, where $(R,\m)$ is a regular local ring and $I$ is a height $2$ perfect ideal. Assume that $\gr_{\n}( S)$ is an integral domain where $\n = \m /I$. If $\mu(I) \le 2$ or $\mu(I) = 3$ and $I \nsubseteq \m^5$, then $\gr_{\n} ( S)$ is Cohen-Macaulay.   
\begin{proof} 
Since $\gr_{\n}(S) \cong \gr_{\m} ( R) / I^*$ \cite[Exercise 5.3]{Ei95}, it suffices to show that $I^*$ is a Cohen-Macaulay ideal. There exists a minimal generating set of $I$ such that its leading forms are part of a minimal generating set of $I^*$. We fix a generating set of $I$ with this property. \\

Case 1: When $\mu(I) = 2$: Let $I = (f,g)$ for some $f$ and $g$ in $S$. We claim that $I^* = (f^*, g^*)$. Since $I^*$ is a prime ideal and $f^*, g^*$ form a part of minimal generating set of $I^*$, $f^*$ is a prime element. Since $f^*,g^*$ are part of a minimal generating set of $I^*$, $g^* \notin (f^*)$. Since $G/(f^*)$ is a domain, the image of $g^*$ in this ring is a non-zerodivisor. By \cite[Exercise 5.2]{Ei95}, the image of $I^*$ is generated by the image of $g^*$. Hence $I^* = (f^*, g^*)$. \\

Case 2: When $\mu(I) = 3$:  Since $I$ is a height 2 perfect ideal, by  the Hilbert-Burch theorem \cite{BH}, $I$ can be generated by the $2$ by $2$ minors of a $2$ by $3$ matrix $M$ where the minors are the chosen generators. Write
\[
M = \left( 
	\begin{array}{ccc}
	a & b & c \\
	u & v & w
	\end{array} \right).
\]

If $\ord_{\m}(I) \le 3$, $I_2(M) = I \nsubseteq \m^4$, hence $I_1(M) \nsubseteq \m^2$. Now, the result follows by applying \Cref{value1} to the matrix $M$.\\

Suppose $\ord_{\m}(I) = 4$. If there exists an entry of $M$ which has order $1$, then we may apply \Cref{value1}. We may assume that no entry of $M$ has order $1$. Since $\ord_{\m}(I) = 4$, at least one of the $2 \times 2$ minors of $M$ has order $4$. Without loss of generality, we may assume that $\ord_{\m}(av-bu) = 4$. By the assumption on orders, $\ord_{\m}(a),\ord_{\m}(b), \ord_{\m}(u)$, and $\ord_{\m}(v)$ are greater than or equal to $2$. 
Since $4 = \ord_{\m}(av-bu) \ge \min \{ \ord_{\m}(av), \ord_{\m}(bu) \} \ge 4$, we have $\ord_{\m}(a) = \ord_{\m}(b) = \ord_{\m}(u) = \ord_{\m}(v) = 2$. This implies $(av-bu)^* =a^* v^* - b^* u^*$. Then we are done by \Cref{oneforall}.
\end{proof}
\end{thm}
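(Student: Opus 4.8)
The plan is to convert Cohen--Macaulayness into a statement about the leading ideal and then dispatch the two cases $\mu(I)=2$ and $\mu(I)=3$ by reducing each to one of the structural lemmas above. By \cite[Exercise 5.3]{Ei95} one has $\gr_{\n}(S)\cong \gr_{\m}(R)/I^*$; writing $G=\gr_{\m}(R)$, which is a polynomial ring over $R/\m$ and hence Cohen--Macaulay, the quotient $G/I^*$ is Cohen--Macaulay exactly when $I^*$ is a perfect ideal. Since $G/I^*$ is assumed to be a domain, $I^*$ is prime, and a dimension count ($\dim G/I^*=\dim S=\dim R-2$) gives $\he I^*=2$. Thus the whole theorem reduces to showing that $I^*$ is perfect of height $2$. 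At the outset I would fix a minimal generating set of $I$ whose leading forms extend to a minimal homogeneous generating set of $I^*$; such a set always exists and is precisely what lets me invoke \Cref{value1} and \Cref{oneforall}. I would also record the observation used repeatedly below: any element of a minimal generating set of the prime ideal $I^*$ is irreducible, since a nontrivial factorization $f^*=pq$ would put one factor in the prime $I^*$ and the other in the homogeneous maximal ideal $G_+$, forcing $f^*\in G_+I^*$.

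For $\mu(I)\le 2$, necessarily $\mu(I)=2$ since $\he I=2$, so write $I=(f,g)$. By the observation above $f^*$ is a prime element, so $G/(f^*)$ is a domain; the image of $g^*$ there is a nonzerodivisor (again $g^*\notin G_+I^*$ forces $g^*\notin(f^*)$), and \cite[Exercise 5.2]{Ei95} then yields $I^*=(f^*,g^*)$. This is a complete intersection, hence perfect of height $2$.

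For $\mu(I)=3$, the Hilbert--Burch theorem presents $I=I_2(M)$ with the three chosen minors as the generators and $M$ a $2\times 3$ matrix with entries in $\m$, and I would organize the argument by $\ord_{\m}(I)$; the hypothesis $I\nsubseteq\m^5$, i.e.\ $\ord_{\m}(I)\le 4$, is exactly what confines the analysis to the reach of \Cref{value1} and \Cref{oneforall}. If $\ord_{\m}(I)\le 3$ then $I=I_2(M)\nsubseteq\m^4$, which forces $I_1(M)\nsubseteq\m^2$ (otherwise each minor, a sum of products of two entries of $\m^2$, would lie in $\m^4$), so some entry has order $1$ and \Cref{value1} applies. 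If $\ord_{\m}(I)=4$ and some entry still has order $1$, \Cref{value1} again applies. Otherwise all entries have order $\ge 2$; since $\ord_{\m}(I)$ equals the least order of a generator, some minor has order exactly $4$, say $av-bu$, and then $(av-bu)^*$ is irreducible by the observation above. This rules out the unbalanced possibilities $\ord_{\m}(av)\ne\ord_{\m}(bu)$ (which would make $(av-bu)^*$ equal to a single reducible product $a^*v^*$ or $b^*u^*$), so $\ord_{\m}(av)=\ord_{\m}(bu)=4$, whence $\ord_{\m}(a)=\ord_{\m}(b)=\ord_{\m}(u)=\ord_{\m}(v)=2$ and $(av-bu)^*=a^*v^*-b^*u^*$; now \Cref{oneforall} completes the argument.

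I expect the main obstacle to be the subcase $\mu(I)=3$, $\ord_{\m}(I)=4$ with no linear entry, where the leading form of a minor need not equal the naive product of the leading forms of its entries. The crux is the order bookkeeping that forces the four relevant entries to have order exactly $2$ so that no cancellation occurs in $(av-bu)^*$; the clean way to secure this is the irreducibility of $(av-bu)^*$ as a member of a minimal generating set of the prime $I^*$, which excludes the unbalanced cases. This is precisely the regime that collapses once $I\subseteq\m^5$, explaining why the hypothesis cannot simply be dropped. A secondary point requiring care is verifying throughout that the chosen Hilbert--Burch generators have leading forms forming part of a minimal generating set of $I^*$, since this is the standing hypothesis of both \Cref{value1} and \Cref{oneforall}.
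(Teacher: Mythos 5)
Your proposal is correct and follows essentially the same route as the paper's proof: reduce to showing $I^*$ is perfect of height $2$, settle $\mu(I)=2$ by the prime-element/nonzerodivisor argument with \cite[Exercise 5.2]{Ei95}, and settle $\mu(I)=3$ via Hilbert--Burch, splitting on $\ord_{\m}(I)\le 3$ or a linear entry (apply \Cref{value1}) versus $\ord_{\m}(I)=4$ with no linear entry, where the order bookkeeping forces $(av-bu)^*=a^*v^*-b^*u^*$ and \Cref{oneforall} finishes. Your explicit irreducibility observation, ruling out the unbalanced case $\ord_{\m}(av)\neq\ord_{\m}(bu)$, just makes precise a step the paper leaves implicit.
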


\begin{rmk}
One can not relax the condition of $\gr_I(R)$ a domain. Let $R = \mathbb{C}[[ a,b,c,d,e ]]$ and $I$ be the $2$ by $2$ minors of the matrix $M$,
\[
\begin{bmatrix}
a^2+c^3 & 0 & ad+c^3 \\
ab+c^3 & ae + a^3 & 0
\end{bmatrix}.
\]
Then $I^*$ is not prime, and $G/I^*$ is not Cohen-Macaulay. 
\end{rmk}

One can see from \Cref{value1,oneforall} that once we can find a minor which commutes with taking $^*$, then we can find a matrix $M$ where the images of the minors generate the leading ideal. The following theorem analyzes the case when none of the minors commute with taking $^*$. 

\begin{thm}
Let $(R,\m)$ be a regular local ring with maximal ideal $\m$. Let $I$ be an ideal which is minimally generated by the $2$ by $2$ minors of a $2$ by $3$ matrix $M$ with entries in $\m$ where 
\[
M = 	\begin{pmatrix} 
	a & b & c \\
	u & v & w
	\end{pmatrix}.
\]
Let 
\[
M^* = \begin{pmatrix} 
	a^* & b^* & c^* \\
	u^* & v^* & w^*
	\end{pmatrix}.
\]
Suppose that $(av-bu)^*, (aw-cu)^*$ form part of a minimal generating set of $I^*$ and $I^*$ is a prime ideal. If $(av-bu)^* \neq a^*v^* - b^*u^*$ and $(aw-cu)^* \neq a^* w^* - c^* u^*$, then either $\he I_1(M^*) \le 2$ or one of the rows of $M^*$ divides the other. 
\begin{proof}
Since the images of the minors form part of a minimal generating set of an prime ideal $I^*$, we have $a^*v^* = b^*u^*, a^* w^* =  c^* u^*$. 
Let $p^* = \gcd (a^*,u^*)$ where $p$ in $R$. Write $a^* = p^* (a')^*, u^* = p^* (u')^*$ where $a', u'$ in $R$. 
Then $(a')^* v^* = b^*(u')^*$ where $\gcd ((a')^*, (u')^*) \sim 1$. 
Therefore $(a')^* | b^*$ and $(u')^* | v^*$. Let $q$ in $R$ such that $(a')^* q^* = b^*$ and $(u')^* q^* = v^*$. From $a^* w^* =  c^* u^*$, we have $(a')^* w^* =  c^* (u')^*$. Therefore there exists $r$ in $R$ such that $(a')^* r^* = c^*$ and $(u')^* r^* = w^*$. Now, we have
\[
M^* = \begin{pmatrix} 
	(a')^* p^* & (a')^* q^* & (a')^* r^* \\
	(u')^* p^* & (u')^* q^* & (u')^* r^*
	\end{pmatrix}.
\]
Therefore $I_1(M^*) \subseteq ((a')^*,(u')^*)$, and this implies $\he I_1(M^*) \le 2$ if $((a')^*,(u')^*)$ is not a unit ideal. If $((a')^*,(u')^*)$ is a unit ideal, then either $(a')^*$ or $(u')^*$ is a unit. Without loss of generality, assume that $(a')^*$ is a unit. Then one can easily see that indeed the first row divides the second row. 
\end{proof}
\end{thm}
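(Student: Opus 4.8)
The plan is to convert the two hypotheses $(av-bu)^* \neq a^*v^*-b^*u^*$ and $(aw-cu)^* \neq a^*w^*-c^*u^*$ into the multiplicative identities $a^*v^* = b^*u^*$ and $a^*w^* = c^*u^*$ in the UFD $G := \gr_{\m}(R)$, and then to read off the structure of $M^*$ by a greatest-common-divisor computation on its first column.

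For the first step I would argue exactly as in the proofs of \Cref{value1} and \Cref{oneforall}: since $(av-bu)^*$ belongs to a minimal generating set of the prime ideal $I^*$, it is a prime element of $G$ (were it to factor into non-units, one factor would lie in $I^*$ in strictly smaller degree and hence, by homogeneity, in the ideal generated by the remaining minimal generators, contradicting minimality). \Cref{primeElement} then gives $\ord_{\m}(av) = \ord_{\m}(bu)$, and feeding this together with $(av-bu)^* \neq a^*v^*-b^*u^*$ into the contrapositive of \Cref{ordIne} forces $a^*v^*-b^*u^* = 0$. The identical argument applied to the prime element $(aw-cu)^*$ yields $a^*w^* = c^*u^*$. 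These two identities are the engine of the proof.

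Next I would exploit that $G$ is a polynomial ring over the residue field, hence a UFD. Set $p^* = \gcd(a^*,u^*)$, realized as the leading form of some $p \in R$, and write $a^* = p^*(a')^*$ and $u^* = p^*(u')^*$ with $\gcd((a')^*,(u')^*) \sim 1$. Cancelling $p^*$ in $a^*v^* = b^*u^*$ gives $(a')^*v^* = b^*(u')^*$; coprimality and unique factorization then yield a \emph{single} $q \in R$ with $b^* = (a')^*q^*$ and $v^* = (u')^*q^*$. Running the same cancellation on $a^*w^* = c^*u^*$ produces $r \in R$ with $c^* = (a')^*r^*$ and $w^* = (u')^*r^*$. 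This puts $M^*$ in the factored form displayed in the statement and shows $I_1(M^*) \subseteq ((a')^*,(u')^*)$.

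Finally I would split into two cases. If $((a')^*,(u')^*)$ is a proper ideal, then it is generated by two elements, so monotonicity of height under the inclusion $I_1(M^*) \subseteq ((a')^*,(u')^*)$ together with Krull's height theorem gives $\he I_1(M^*) \le 2$. If instead $((a')^*,(u')^*) = G$, then, the generators being homogeneous, one of $(a')^*, (u')^*$ must be a nonzero constant, i.e.\ a unit; the factored form of $M^*$ then expresses one row as a homogeneous scalar multiple of the other, which is the second alternative in the conclusion. I expect the delicate point to be the first step: justifying the passage from an inequality of leading forms to the exact multiplicative equality $a^*v^* = b^*u^*$, since it hinges both on the primeness of the minors' leading forms and on the precise order bookkeeping supplied by \Cref{primeElement} and \Cref{ordIne}.
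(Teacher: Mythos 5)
Your proposal is correct and takes essentially the same route as the paper's own proof: you derive $a^*v^* = b^*u^*$ and $a^*w^* = c^*u^*$ from the primeness of the minimal generators via \Cref{primeElement} and \Cref{ordIne}, factor $M^*$ through $p^* = \gcd(a^*,u^*)$ in the UFD $\gr_{\m}(R)$, and split on whether $((a')^*,(u')^*)$ is proper, exactly as the paper does. The only difference is that you spell out the first step (which the paper states tersely, since the identical argument appears in its proofs of \Cref{value1} and \Cref{oneforall}), and you even silently correct the paper's typo by writing the cancelled equation as $(a')^*v^* = b^*(u')^*$.
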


\section{Serre's conditions}\label{sec:serre}
In this section we show that when a ring $R$ is local, equidimensional, and universally catenary, if $\gr_I(R)$ satisfies Serre's condition $(S_i)$ (or $(R_i)$), then $R[It,t^{-1}]$ satisfies Serre's condition $(S_i)$ (or $(R_i)$), and $R$ satisfies Serre's condition $(S_i)$. \\

When $R$ is a $\mathbb{Z}$-graded ring and $\p$ is a prime ideal, let $\p^*$ denote  the ideal generated by all homogeneous elements in $\p$. This is a homogeneous prime ideal which has height exactly one less than that of $\p$ if $\p$ is not a homogenous ideal. 
Recall that a Noetherian ring $R$ satisfies \textit{Serre's condition $(S_i)$} if for every prime ideal $\p$ of $R$, $\depth R_\p \ge \min \{ i, \dim R_\p \}$.
A Noetherian ring $R$ satisfies \textit{Serre's condition $(R_i)$} if for every prime ideal $\p$ of $R$ with $\dim R_\p \le i$, the ring $R_\p$ is regular.

\begin{lem}[{\cite[Theorem 1.5.9 and Exercise 2.1.27, 2.2.24]{BH}}]\label{homogLocalization}
Let $R$ be a Noetherian $\mathbb{Z}$-graded ring.
\begin{enumerate}[$(a)$]
\item For $\p \in \Spec (R)$ the localization $R_\p$ is regular $($Cohen-Macaulay$)$ if and only if $R_{\p^*}$ is. 
\item Let $\p \in \Spec (R)$. If $\p$ is not homogeneous, then $\depth R_\p = \depth R_{\p^*} + 1$.  
\end{enumerate}
\end{lem}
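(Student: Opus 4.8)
The plan is to reduce everything to the structure theory of $\mathbb{Z}$-graded rings localized at a prime, and then read off both statements from faithfully flat descent. Write $q := p^*$, which is a homogeneous prime. If $p$ is homogeneous then $p = q$ and part (a) is trivial (part (b) excludes this case), so assume throughout that $p$ is not homogeneous. The key object is the homogeneous localization $B := R_{(q)}$, obtained by inverting the homogeneous elements outside $q$; it is a $\mathbb{Z}$-graded Noetherian ring with unique maximal homogeneous ideal $qB$, and one checks that $R_q = B_{qB}$ and $R_p = B_{pB}$, with $pB \supsetneq qB$ because $p$ is not homogeneous.

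First I would establish the central structural fact: $B \cong C[t,t^{-1}]$, where $C := [B]_0$ is a local ring with maximal ideal $\mathfrak m_C := [qB]_0$ and $t$ is a homogeneous unit of positive degree. Indeed, $B/qB$ is the homogeneous field of fractions of the graded domain $R/q$; since $p$ is not homogeneous, $p/q$ is a nonzero prime of $R/q$ with zero homogeneous part, which forces $R/q$ to possess homogeneous elements of nonzero degree, so $B/qB$ is a graded field of Laurent type $K[t,t^{-1}]$ rather than a field concentrated in degree $0$. Lifting $t$ to a homogeneous element of $B$ of minimal positive degree, this element lies outside the maximal homogeneous ideal $qB$ and is therefore a unit of $B$; the standard Laurent-decomposition theorem for graded rings carrying a homogeneous unit of minimal positive degree then yields $B \cong C[t,t^{-1}]$. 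I expect this isomorphism to be the main obstacle, as it is the only genuinely graded input; it also gives at once $\he p = \he q + 1$, matching the height statement recorded before the lemma.

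Granting the decomposition, I would finish by analysing the two faithfully flat local maps $C \to R_q$ and $C \to R_p$. Both are flat, being localizations of the free extension $C \to C[t,t^{-1}]$, and faithfully flat because $\mathfrak m_C$ survives into each maximal ideal. For part (a), regularity (resp. Cohen-Macaulayness) of $R_p$ or $R_q$ descends to $C$ along a faithfully flat local map, and conversely ascends from $C$ since the closed fibres are regular: the fibre of $C \to R_q$ is the fraction field $\kappa(t)$ of $\kappa[t,t^{-1}]$ (with $\kappa = C/\mathfrak m_C$), while the fibre of $C \to R_p$ is the localization of the PID $\kappa[t,t^{-1}]$ at an irreducible element, a DVR. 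Hence $R_p$ is regular (Cohen-Macaulay) if and only if $C$ is, if and only if $R_q = R_{p^*}$ is, which is (a). For part (b) I would apply the depth formula for faithfully flat local homomorphisms, namely $\depth = \depth(\text{base}) + \depth(\text{closed fibre})$: the fibre $\kappa(t)$ of $C \to R_q$ has depth $0$, giving $\depth R_{p^*} = \depth C$, whereas the DVR fibre of $C \to R_p$ has depth $1$, giving $\depth R_p = \depth C + 1$. Subtracting yields $\depth R_p = \depth R_{p^*} + 1$, which is (b).
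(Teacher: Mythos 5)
The paper itself offers no argument for this lemma (it simply cites Bruns--Herzog, Theorem 1.5.9 and Exercises 2.1.27, 2.2.24), so your proposal has to stand on its own, and it does not: the ``standard Laurent-decomposition theorem'' you invoke is false. The decomposition $B=[B]_0[t,t^{-1}]$ is valid precisely when $B$ has a homogeneous unit of degree $1$ (then $[B]_n=[B]_0t^n$ for every $n$); if the minimal positive degree $d$ of a homogeneous unit is $\ge 2$, the ring $B$ may perfectly well have nonzero components in degrees not divisible by $d$, which the subring $C[t,t^{-1}]$ cannot contain. Your construction only produces a unit whose degree is the minimal positive degree occurring in the graded field $B/qB$, and nothing forces that degree to be $1$. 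Concretely, take $R=k[y,z][x,x^{-1}]$ with $\deg y=\deg z=1$, $\deg x=2$, and $p=(y,z,x-1)$, so that $q=p^{*}=(y,z)$. Every homogeneous element of odd degree lies in $q$, so every homogeneous unit of $B=R_{(q)}$ has even degree (the lift of the generator of $B/qB\cong k[\bar x,\bar x^{\,-1}]$ is $x$, of degree $2$), and yet $[B]_1\neq 0$ since it contains $y$; hence $B\neq [B]_0[x,x^{-1}]$.

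Moreover the failure is not cosmetic: in this example $C:=[B]_0\cong \bigl(k[u,v,w]/(v^2-uw)\bigr)_{(u,v,w)}$ with $u=y^2/x$, $v=yz/x$, $w=z^2/x$, the vertex of the quadric cone, which is \emph{not} regular, while $R_p$ and $R_{p^{*}}$ are regular (localizations of a regular ring). So your chain ``$R_p$ regular $\iff C$ regular $\iff R_{p^{*}}$ regular'' is simply false; and since regularity descends along faithfully flat local homomorphisms, $C\to R_p$ cannot even be flat, so the flatness and fiber claims collapse together with the decomposition. The standard way to rescue exactly your strategy is to untwist first: pass to $R'=R[w,w^{-1}]$ with $\deg w=1$. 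Now $w$ is a degree-one unit, so $R'=[R']_0[w,w^{-1}]$ genuinely holds, and the map $\iota\colon R\to[R']_0$, $\sum_k a_k\mapsto \sum_k a_kw^{-k}$ (sum over homogeneous components), is a ring isomorphism; one checks $pR'\cap [R']_0=\iota(p^{*})$ and $p^{*}R'=\iota(p^{*})R'$. Then $R_p\to R'_{pR'}$ is faithfully flat local with closed fiber the field $\kappa(p)(w)$, and $R_{p^{*}}\cong([R']_0)_{\iota(p^{*})}\to R'_{pR'}$ is faithfully flat local with closed fiber a localization of $\kappa(p^{*})[w,w^{-1}]$ at a nonzero prime, i.e.\ a DVR; running your descent/ascent and depth-formula arguments through $R'_{pR'}$ instead of through $C$ proves both (a) and (b). As written, your argument is correct only when $B$ admits a homogeneous unit of degree one --- true, for instance, for extended Rees algebras, which are generated in degrees $0,\pm 1$ --- but the lemma is stated for arbitrary Noetherian $\mathbb{Z}$-graded rings.
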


\begin{thm}\label{thm:Serre} Let $(R, \m)$ be a local equidimentional universally catenary ring. Let $I \subseteq \m$ be an $R$-ideal. Consider the following conditions:
\begin{enumerate}[$(a)$]
\item The ring $\gr_I(R)$ satisfies Serre's condition $(S_i)$ $($or $R_i)$.
\item The ring $R[It,t^{-1}]$ satisfies Serre's condition $(S_i)$ $($or $R_i)$.
\item The ring $R$ satisfies Serre's condition $(S_i)$.
\end{enumerate}
We have $(a) \Longrightarrow (b) \Longrightarrow (c)$.
\begin{proof}
\noindent (a) $\Longrightarrow$ (b): Let $\pi: R[It,t^{-1}] \longrightarrow \gr_I ( R)$ be the natural surjective ring homomorphism. By \Cref{homogLocalization}(a)(b) it suffices to show Serre's condition $(S_i)$ (or $(R_i)$) for homogenous prime ideals. Let $P \subseteq R[It,t^{-1}]$ a homogeneous prime ideal. Since $R$ is universally catenary and equidimensional, so is $R[It,t^{-1}]$. This implies that $\he (P + (t^{-1}) ) \le \he P + 1$. We can choose a minimal prime $Q$ of $P  + (t^{-1})$ of height $\he (P + (t^{-1}) )$.  
We first show the statement for Serre's condition $(S_i)$. 
Since $\gr_I(R)$ satisfies Serre's condition $(S_i)$, $\depth \gr_I(R)_{\pi(Q)} \ge \min \{i, \dim \gr_I(R)_{\pi(Q)} \}$. Since $\depth \gr_I(R)_{\pi(Q)} = \depth R[It, t^{-1}]_Q - 1$ and $\dim \gr_I(R)_{\pi(Q)} = \dim R[It,t^{-1}]_Q \allowbreak - 1$, we have 
\begin{equation}\label{eq:dimDepth}
\depth R[It, t^{-1}]_Q \ge \min \{i + 1, \dim R[It,t^{-1}]_Q \}.
\end{equation}
If $P = Q$, i.e., $t^{-1} \in P$, then we are done. Suppose $P \subsetneq Q$. We need to show that $\depth R[It, t^{-1}]_P \ge \min \{i , \dim R[It,t^{-1}]_P \}$. Since $\dim R[It,t^{-1}]_Q = \dim R[It,t^{-1}]_P - 1$, by \Cref{eq:dimDepth} it suffices to show that $\depth R[It,t^{-1}]_P \ge \depth R[It,t^{-1}]_Q + 1$. 
This follows immediately once we have shown that 
\[
\Ext^j_{R[It, t^{-1}]_Q}( (R[It,t^{-1}]/P)_Q, R[It, t^{-1}]_Q) = 0 \hbox{ for } j < \depth R[It,t^{-1}]_Q - 1.
\] 
Since $\dim (R[It,t^{-1}]/P)_Q = 1$, this follows from \cite[(15.E) Lemma 2]{Mat}.\\ 

Now, suppose that $\gr_I(R)$ satisfies Serre's condition $(R_i)$. 
When $\he P \le i$, since $\he \pi(Q) \le \he P \le i$, $\gr_I(R)_{\pi(Q)}$ is regular. Since $R[It,t^{-1}]/ (t^{-1}) \cong \gr_I (R)$ and $t^{-1}$ is a regular element,  $R[It,t^{-1}]_Q$ is regular. Since $P \subseteq Q$, $R[It,t^{-1}]_P$ is regular. \\ 
\noindent (b) $\Longrightarrow$ (c): Let $\p$ be a prime ideal of $R$ of height $c$.  Recall that $ R[It,t^{-1}] / (t^{-1}-1) \cong R$. Let $P$ be the pre-image of $\p$ in $R[It,t^{-1}]$. Since $R[It,t^{-1}]$ is equidimensional and universally catenary, $\he P = c + 1$. Since $P$ contains $t^{-1} -1$, it is a non homogeneous prime ideal of $R[It,t^{-1}]$. Therefore, $P^*$ is a homogeneous prime ideal of height $c$.
Since $R[It,t^{-1}]$ satisfies Serre's condition $(S_i)$, we have $\depth R[It,t^{-1}]_{P^*} \ge \min \{ i , \dim R[It,t^{-1}]_{P^*} \}$. Recall that $t^{-1} - 1$ is a regular element in $R[It,t^{-1}]$ and $R \cong R[It,t^{-1}]/ (t^{-1} - 1)$. We have  $\depth R[It,t^{-1}]_{P^*} = \depth R[It,t^{-1}]_{P} - 1 = \depth R_\p + 1  - 1 = \depth R_\p$ and $\dim R[It,t^{-1}]_{P^*} = \dim R[It,t^{-1}]_{P} - 1 = \dim R_\p + 1  - 1 = \dim R_\p$ where the first equality follows from \Cref{homogLocalization}(b). Therefore, $\depth R_\p \ge \min \{ i , \dim R_\p \}$. 
\end{proof}
\end{thm}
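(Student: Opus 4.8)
The plan is to route both implications through the extended Rees algebra $T:=R[It,t^{-1}]$, using that $t^{-1}$ is a homogeneous nonzerodivisor with $T/(t^{-1})\cong G:=\gr_I(R)$ and that $t^{-1}-1$ is a (non-homogeneous) nonzerodivisor with $T/(t^{-1}-1)\cong R$. Before anything else I would reduce both $(S_i)$ and $(R_i)$ to homogeneous primes of $T$: by \Cref{homogLocalization}(a) regularity and Cohen--Macaulayness of $T_P$ are equivalent to the same for $T_{P^*}$, and by part (b) passing from a non-homogeneous $P$ to $P^*$ drops depth and dimension each by one, so verifying either Serre condition at all homogeneous primes suffices.

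For $(a)\Rightarrow(b)$, fix a homogeneous prime $P$ of $T$ and let $\pi\colon T\twoheadrightarrow G$ be the natural surjection. Since $R$, hence $T$, is equidimensional and universally catenary, a minimal prime $Q$ of $P+(t^{-1})$ of maximal height satisfies $\he Q=\he(P+(t^{-1}))\le \he P+1$. As $t^{-1}\in Q$, the prime descends to $\pi(Q)$ in $G$, and the regular element gives $\depth G_{\pi(Q)}=\depth T_Q-1$ and $\dim G_{\pi(Q)}=\dim T_Q-1$; substituting the $(S_i)$ inequality for $G$ at $\pi(Q)$ then yields $\depth T_Q\ge\min\{i+1,\dim T_Q\}$. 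If $t^{-1}\in P$ we have $P=Q$ and are done; otherwise $P\subsetneq Q$, and catenaricity forces $\he Q=\he P+1$, so that $\dim T_P=\dim T_Q-1$ and $\dim(T/P)_Q=1$.

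The technical heart, which I expect to be the main obstacle, is transporting the depth bound at $Q$ down to $P$ in this second case. I would localize at $Q$, setting $A:=T_Q$ and $\mathfrak p:=PT_Q$, so that $T_P=A_{\mathfrak p}$ and $A/\mathfrak p=(T/P)_Q$ has dimension $1$. Ischebeck's theorem (\Cref{lem:Ischebeck}) applied to the pair $(A/\mathfrak p,A)$ gives $\Ext^j_A(A/\mathfrak p,A)=0$ for $j<\depth A-\dim(A/\mathfrak p)=\depth T_Q-1$, whence $\grade(\mathfrak p,A)\ge \depth T_Q-1$; since a maximal $A$-regular sequence in $\mathfrak p$ localizes to an $A_{\mathfrak p}$-regular sequence, $\depth T_P=\depth A_{\mathfrak p}\ge\grade(\mathfrak p,A)\ge \depth T_Q-1\ge\min\{i,\dim T_Q-1\}=\min\{i,\dim T_P\}$, which is $(S_i)$ at $P$. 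The $(R_i)$ statement is comparatively easy: when $\he P\le i$ one has $\he\pi(Q)=\he Q-1\le i$, so $(R_i)$ for $G$ makes $G_{\pi(Q)}$ regular, the nonzerodivisor $t^{-1}$ lifts this to regularity of $T_Q$, and the localization $T_P=(T_Q)_{\mathfrak p}$ of a regular local ring is regular.

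For $(b)\Rightarrow(c)$, take $p\subset R$ with $\he p=c$ and let $P$ be its preimage under $T\twoheadrightarrow T/(t^{-1}-1)\cong R$. Then $P$ contains the non-homogeneous nonzerodivisor $t^{-1}-1$, so $P$ is non-homogeneous with $\he P=c+1$ (equidimensionality and universal catenaricity) and $\he P^*=c$. Combining \Cref{homogLocalization}(b) with $R_p\cong T_P/(t^{-1}-1)T_P$ gives $\depth T_{P^*}=\depth T_P-1=\depth R_p$ and $\dim T_{P^*}=\dim R_p$, so reading the $(S_i)$ condition for $T$ at the homogeneous prime $P^*$ delivers $(S_i)$ for $R$ at $p$.
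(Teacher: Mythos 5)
Your proposal is correct and follows the paper's own proof essentially step for step: the same reduction to homogeneous primes via \Cref{homogLocalization}, the same minimal prime $Q$ of $P+(t^{-1})$ with the depth and dimension shifts coming from the nonzerodivisor $t^{-1}$, the same application of Ischebeck's lemma (\Cref{lem:Ischebeck}) to descend from $Q$ to $P$, and the identical argument for $(b)\Rightarrow(c)$. Your rendering of the descent step is in fact the corrected version of the paper's: the paper's assertions $\dim R[It,t^{-1}]_Q = \dim R[It,t^{-1}]_P - 1$ and $\depth R[It,t^{-1}]_P \ge \depth R[It,t^{-1}]_Q + 1$ are typos for $\dim R[It,t^{-1}]_P = \dim R[It,t^{-1}]_Q - 1$ and $\depth R[It,t^{-1}]_P \ge \depth R[It,t^{-1}]_Q - 1$, which is precisely what you establish via $\grade(P R[It,t^{-1}]_Q, R[It,t^{-1}]_Q) \ge \depth R[It,t^{-1}]_Q - 1$.
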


\begin{bibdiv}
\begin{biblist}

\bib{Ao83}{article}{
  author={Aoyama, Y.},
  title={Some basic results on canonical modules},
  date={1983},
  issn={0023-608X},
  journal={J. Math. Kyoto Univ.},
  volume={23},
  number={1},
  pages={85\ndash 94},
}

\bib{BH}{book}{
  author={Bruns, W.},
  author={Herzog, J.},
  title={Cohen-{M}acaulay rings},
  series={Cambridge Studies in Advanced Mathematics},
  publisher={Cambridge University Press},
  address={Cambridge},
  date={1993},
  volume={39},
  isbn={0-521-41068-1},
}

\bib{BE73}{article}{
  author={Buchsbaum, D.~A.},
  author={Eisenbud, D.},
  title={What makes a complex exact?},
  date={1973},
  journal={J. Algebra},
  volume={25},
  pages={259\ndash 268},
}

\bib{Ei95}{book}{
  author={Eisenbud, D.},
  title={Commutative algebra with a view toward algebraic geometry},
  series={Graduate Texts in Mathematics},
  publisher={Springer-Verlag},
  address={New York},
  date={1995},
  volume={150},
  isbn={0-387-94268-8; 0-387-94269-6},
}

\bib{FK74}{article}{
  author={Freitag, E.},
  author={Kiehl, R.},
  title={Algebraische Eigenschaften der lokalen Ringe in den Spitzen der Hilbertschen Modulgruppen},
  journal={Invent. Math.},
  volume={24},
  date={1974},
  pages={121--148},
}

\bib{Mat}{book}{
  author={Matsumura, H.},
  title={Commutative algebra},
  series={Mathematics Lecture Note Series},
  volume={56},
  edition={2},
  publisher={Benjamin/Cummings Publishing Co., Inc., Reading, Mass.},
  date={1980},
  pages={xv+313},
  isbn={0-8053-7026-9},
}

\bib{HKU05}{article}{
  author={Heinzer, W.},
  author={Kim, M.-K.},
  author={Ulrich, B.},
  title={The {G}orenstein and complete intersection properties of associated graded rings},
  date={2005},
  journal={J. Pure Appl. Algebra},
  volume={201},
  number={1-3},
  pages={264\ndash 283},
}

\bib{HKU11}{article}{
  author={Heinzer, W.},
  author={Kim, M.-K.},
  author={Ulrich, B.},
  title={The {C}ohen-{M}acaulay and {G}orenstein properties of rings associated to filtrations},
  date={2011},
  issn={0092-7872},
  journal={Comm. Algebra},
  volume={39},
  number={10},
  pages={3547\ndash 3580},
}

\bib{HSV87}{article}{
  author={Herzog, J.},
  author={Simis, A.},
  author={Vasconcelos, W.~V.},
  title={On the canonical module of the {R}ees algebra and the associated graded ring of an ideal},
  date={1987},
  issn={0021-8693},
  journal={J. Algebra},
  volume={105},
  number={2},
  pages={285\ndash 302},
}

\bib{Ho72}{article}{
  author={Hochster, M.},
  title={Rings of invariants of tori, {C}ohen-{M}acaulay rings generated by monomials, and polytopes},
  date={1972},
  issn={0003-486X},
  journal={Ann. of Math. (2)},
  volume={96},
  pages={318\ndash 337},
}

\bib{Ho73}{article}{
  author={Hochster, M.},
  title={Criteria for equality of ordinary and symbolic powers of primes},
  date={1973},
  issn={0025-5874},
  journal={Math. Z.},
  volume={133},
  pages={53\ndash 65},
}

\bib{HH92}{article}{
  author={Huckaba, S.},
  author={Huneke, C.},
  title={Powers of ideals having small analytic deviation},
  date={1992},
  issn={0002-9327},
  journal={Amer. J. Math.},
  volume={114},
  number={2},
  pages={367\ndash 403},
}

\bib{HuLLC}{article}{
  author={Huneke, C.},
  title={Lectures on local cohomology, Appendix 1 by Amelia Taylor},
  conference={ title={Interactions between homotopy theory and algebra}, },
  book={ series={Contemp. Math.}, volume={436}, publisher={Amer. Math. Soc.}, place={Providence, RI}, },
  date={2007},
  pages={51\ndash 99},
}

\bib{Mu64}{article}{
  author={Murthy, M.~P.},
  title={A note on factorial rings},
  date={1964},
  issn={0003-889X},
  journal={Arch. Math. (Basel)},
  volume={15},
  pages={418\ndash 420},
}

\bib{HK71}{book}{
  title={Der kanonische Modul eines Cohen-Macaulay-Rings},
  series={Lecture Notes in Mathematics, Vol. 238},
  editor={Herzog, J.},
  editor={Kunz, E.},
  publisher={Springer-Verlag},
  place={Berlin},
  date={1971},
  pages={vi+103},
}

\bib{PU05}{article}{
  author={Polini, C.},
  author={Ulrich, B.},
  title={A formula for the core of an ideal},
  date={2005},
  issn={0025-5831},
  journal={Math. Ann.},
  volume={331},
  number={3},
  pages={487\ndash 503},
}

\bib{PUV07}{article}{
  author={Polini, C.},
  author={Ulrich, B.},
  author={Vitulli, M.~A.},
  title={The core of zero-dimensional monomial ideals},
  date={2007},
  issn={0001-8708},
  journal={Adv. Math.},
  volume={211},
  number={1},
  pages={72\ndash 93},
}

\bib{Re61}{article}{
  author={Rees, D.},
  title={A note on analytically unramified local rings},
  date={1961},
  issn={0024-6107},
  journal={J. London Math. Soc.},
  volume={36},
  pages={24\ndash 28},
}

\bib{Ro00}{article}{
  author={Rossi, M.~E.},
  title={Primary ideals with good associated graded ring},
  date={2000},
  issn={0022-4049},
  journal={J. Pure Appl. Algebra},
  volume={145},
  number={1},
  pages={75\ndash 90},
}

\bib{Tr87}{article}{
  author={Trung, N.~V.},
  title={Reduction exponent and degree bound for the defining equations of graded rings},
  date={1987},
  issn={0002-9939},
  journal={Proc. Amer. Math. Soc.},
  volume={101},
  number={2},
  pages={229\ndash 236},
}

\bib{VV78}{article}{
  author={Valabrega, P.},
  author={Valla, G.},
  title={Form rings and regular sequences},
  date={1978},
  issn={0027-7630},
  journal={Nagoya Math. J.},
  volume={72},
  pages={93\ndash 101},
}

\end{biblist}
\end{bibdiv}
\end{document}